\newcommand{\bigamp}{\mathop{\mbox{\Large \&}}}
\newcommand{\mf}{\mathfrak}
\newcommand{\msf}{\mathsf}
\newcommand{\mbf}{\mathbf}
\newcommand{\mrm}{\mathrm}
\newcommand{\bu}{\bullet}
\newcommand{\mc}{\mathcal}
\newcommand{\imp}{\rightarrow}
\newcommand{\limp}{\leftarrow}
\newcommand{\Imp}{\Rightarrow}
\newcommand{\sub}{\subseteq}
\newcommand{\ol}{\overline}
\newcommand{\ua}{\uparrow}
\newtheorem{fact}[theorem]{Fact}
\title{Unified Correspondence and Proof Theory for Strict Implication}
\author{Minghui Ma\inst{1} \and Zhiguang Zhao\inst{2}}
\institute{
Institute for Logic and Intelligence, Southwest University, Chongqing, China\\
\email{mmh.thu@gmail.com}
\and
Delft University of Technology, Delft, The Netherlands\\
\email{zhaozhiguang23@gmail.com}
}
\authorrunning{Ma and Zhao}
\begin{document}
\maketitle
\begin{abstract}
The unified correspondence theory for distributive lattice expansion logics (DLE-logics) 
is specialized to strict implication logics. As a consequence of a general semantic consevativity result, a wide range of strict implication logics can be conservatively extended to Lambek Calculi over the bounded distributive full non-associative Lambek calculus ($\msf{BDFNL}$). Many strict implication sequents can be transformed into analytic rules employing one of the main tools of unified correspondence theory, namely (a suitably modified version of) the Ackermann lemma based algorithm $\msf{ALBA}$. Gentzen-style cut-free sequent calculi for $\msf{BDFNL}$ and its extensions with analytic rules which are transformed from strict implication sequents, are developed.
\end{abstract}

\section{Introduction}\label{section:introduction}
Strict implication is an intensional implication which is semantically interpreted on Kripke binary relational models in the same fashion as intuitionistic implication. Kripke frames for intuitionistic logic are partially ordered sets, and valuations are required to be persistent, i.e., to map propositional variables to upsets. The intuitionistic implication is already an example of strict implication. 
Subintuitionistic logics, which are
prime examples of strict implication logics (cf.~\cite{corsi87,Visser81,AR98,CJ03,Bou04,Res94,Fran07,Fab04,AB62,Kik01}), arise semantically 
by dropping some conditions from the intuitionistic models outlined above, such as the requirement that the accessibility relation to be reflexive or transitive, and the persistency of valuations.
For example, 
Visser's basic propositional logic $\msf{BPL}$ \cite{Visser81} is a subintuitionistic logic characterized by the class of all transitive frames under the semantics by dropping only the reflexivity condition on frames from the intuitionistic case, and it is embedded into the normal modal logic $\msf{K4}$ via the G\"{o}del-McKinsey-Tarski translation. Another example is the least subintuitionistic logic $\msf{F}$ introduced by Corsi \cite{corsi87} which is characterized by the class of all Kripke frames under the semantics by dropping all conditions on frames or models. Naturally, $\msf{F}$ is embeddable into the least normal modal logic $\msf{K}$.

The present paper proposes a uniform approach to the proof theory of the family of strict implication logics. 
Cut-free sequent calculi exist in the literature for some members of this family \cite{IK07},
for instance, for Visser's propositional logics
\cite{IKK01}. These calculi lack a left- and a right-introduction rule for $\imp$. Instead, 
there is only one rule in which $2^n$ premisses are needed when the conclusion has $n$ implication formulas as the antecedent of the sequent. 
In contrast with this, in the present paper, we provide modular cut-free calculi for a wide class of strict implication logics, each of which has the standard left- and right-introduction rules. Our methodology uses unified correspondence theory. It takes the move from some general semantic conservativity results which naturally arise from the semantic environment of unified correspondence. Specifically, we use the fact that certain strict implication logics can be conservatively extended to 
suitable axiomatic extensions of the bounded distributive lattice full non-associative Lambek calculus ($\msf{BDFNL}$)\footnote{~Non-associative Lambek calculus was first developed by Lambek \cite{Lambek58,Lambek61}. For details about Lambek calculi and substructural logics, we refer to \cite{GO07,Bus06,BusF09,Morrill2010}.} , and develop Gentzen-style cut-free sequent calculi for these axiomatic extensions, using the tools of unified correspondence.

Let us first explain what unified correspondence is and how it can be used in proof theory. In recent years, based on duality-theoretic insights \cite{ConPalSou12}, an encompassing perspective has emerged, making it possible to export the Sahlqvist theory from modal logic to a wide range of logics which includes, among others, intuitionistic and distributive lattice-based (normal modal) logics \cite{CoPa12}, non-normal (regular) modal logics \cite{PaSoZh15r}, substructural logics \cite{CoPa11}, hybrid logics \cite{ConRob}, and mu-calculus \cite{CoCr14,CFPS}. This work has stimulated many applications. Some are closely related to the core concerns of the theory itself, such as the understanding of the relationship between different methodologies for obtaining canonicity results \cite{PaSoZh14}, or of the pseudo-correspondence \cite{CGPSZ14}. Other applications include the dual characterizations of classes of finite lattices \cite{FrPaSa14}, computing the first-order correspondence of rules for one-step frames \cite{BeGh14,La15}, and the identification of the syntactic shape of axioms which can be translated into analytic structural rules\footnote{~Informally, \emph{analytic} rules are those which can be added to a display calculus with cut elimination obtaining again a display calculus with cut elimination.} of a proper display calculus \cite{GP2015}.
These results have given rise to the theory called \emph{unified correspondence} \cite{CoGhPa13}.
	
The most important technical tools in unified correspondence are: (a) a very general syntactic definition of Sahlqvist formulas, which applies uniformly to each logical signature and is given purely in terms of the order-theoretic properties of the algebraic interpretations of the logical connectives; (b) the Ackermann lemma based algorithm $\msf{ALBA}$, which effectively computes first-order correspondents of input term-inequalities, and is guaranteed to succeed on a wide class of inequalities (the so-called \emph{inductive} inequalities) which, like the Sahlqvist class, can be defined uniformly in each signature, and which properly and significantly extends the Sahlqvist class.

From the point of view of unified correspondence, the family of strict implication logics is a very interesting
subclass of normal DLE-logics (i.e., logics algebraically identified by varieties of bounded distributive lattice expansions), not only because they are very well-known and very intensely investigated, but also because they are enjoying two different and equally natural relational semantics, namely, the one described above, interpreting the binary implication by means of a {\em binary} relation \cite{CJ03}, and another, arising from the standard treatment of binary modal operators, interpreting the binary implication by means of a {\em ternary} relation \cite{Kur94}. The existence of these two different semantics makes unified correspondence a very appropriate tool to study the Sahlqvist-type theory of these logics, because of one of the features specific to unified correspondence theory, namely the possibility of developing Sahlqvist-type theory for the logics of strict implication in a modular and simultaneous way for their two types of relational semantics. 

In the present paper we specialize the two tools of unified correspondence theory from the general setting of normal DLE-logics to the setting of strict implication logics. 
The semantic environment of unified correspondence theory allows for a general semantic conservativity result for normal DLE logics, which has been briefly outlined in \cite{GP2015} and is further clarified in the present paper (cf.\ Theorem \ref{th:conservative extension}), and specialized to the setting of strict implication logics.

A second reason for exploring strict implication logics with the tools of unified correspondence is given by the recent developments mentioned above, establishing systematic connections between correspondence results for normal DLE-logics and the characterization of the axiomatic extensions of basic normal DLE-logics which admit display calculi with cut elimination. In particular, in \cite{GP2015}, the tool (a) of unified correspondence theory has been used to provide the syntactic characterization of those axioms which correspond to analytic rules, and tool (b) has been used to provide an effective computation of the rules corresponding to each analytic axiom. 
This work provides an exhaustive answer, relative to the setting of display calculi, to a key question in structural proof theory
which has been intensely investigated in various proof-theoretic settings
(cf.\ \cite{negri2005proof,ciabattoni2008axioms,ciabattoni2009expanding,gore2011correspondence,ciabattoni2012algebraic,lellmann2013correspondence,lahav2013frame,marin2014label,lellmann2014axioms}). 

In fact, a major conceptual motivation of the present paper is provided by the insight that the unified correspondence methodology can be applied to the analyticity issue also in proof-theoretic settings different from display calculi.
Following this insight, in the present paper, we use the tools of unified correspondence in two different ways. Firstly, we present a modified version of the algorithm $\msf{ALBA}$ which is specific to the task of the direct computation of analytic rules of a Gentzen-style calculus for certain logics of strict implication. Secondly, we use this algorithm as a calculus not only to compute analytic rules, but also to establish semantic (algebraic), hence logical equivalences between axioms of different but related logical signatures. This latter one is a novel application of unified correspondence. 

\textbf{Structure of the paper.}  In section 2, we will summarize unified correspondence theory for DLE-logics with specialization to strict implication logics. Specifically, a general theorem on semantic conservativity, $\msf{ALBA}$ algorithm and first-order correspondence will be formulated and specialized.
In section 3, we will introduce the Ackermann lemma based calculus $\msf{ALC}$ for calculating correspondence on over algebras between the strict implication language $\mc{L}_\mrm{SI}$ and the language $\mc{L}_\bu$. More conservativity results will be obtained by using $\msf{ALC}$. In section 4, we will develop cut-free Gentzen-style sequent calculus for $\msf{BDFNL}$, and then extend it
with analytic rules to obtain cut-free sequent calculi.

\section{Preliminaries}
In this section, we will summarize the unified correspondence theory for normal DLE-logics from \cite{GP2015} with specialization to strict implication logics. 

\subsection{Syntax and semantics for DLE-logics}
An {\em order-type} over $n\in \mathbb{N}$
is an $n$-tuple $\varepsilon\in \{1, \partial\}^n$.
Order-types will be typically associated with arrays of variables $\vec p: = (p_1,\ldots, p_n)$. When the order of the variables in $\vec p$ is not specified, we will sometimes abuse notation and write $\varepsilon(p) = 1$ or $\varepsilon(p) = \partial$.
For every order type $\varepsilon$, we denote its {\em opposite} order type by $\varepsilon^\partial$, that is, $\varepsilon^\partial_i = 1$ iff $\varepsilon_i=\partial$ for every $1 \leq i \leq n$. For any lattice $A$, we let $A^1: = A$ and $A^\partial$ be the dual lattice, that is, the lattice associated with the converse partial order of $A$. For any order type $\varepsilon$, we let $A^\varepsilon: = \Pi_{i = 1}^n A^{\varepsilon_i}$.

The language $\mathcal{L}_\mathrm{DLE}(\mathcal{F}, \mathcal{G})$ (sometimes abbreviated as $\mathcal{L}_\mathrm{DLE}$) consists of: 1) a denumerable set of proposition letters $\mathsf{AtProp}$, elements of which are denoted $p,q,r$, possibly with indexes; 2) disjoint finite sets of connectives $\mathcal{F}$ and $\mathcal{G}$. Each $f\in \mathcal{F}$ (respectively\ $g\in \mathcal{G}$) has arity $n_f\in \mathbb{N}$ (respectively\ $n_g\in \mathbb{N}$) and is associated with some order-type $\varepsilon_f$ over $n_f$ (respectively\ $\varepsilon_g$ over $n_g$). 
\begin{definition}
The {\em terms} ({\em formulas}) of $\mathcal{L}_\mathrm{DLE}$ are defined recursively as follows:
\[
\phi ::= p \mid \bot \mid \top \mid (\phi \wedge \phi) \mid (\phi \vee \phi) \mid f(\overline{\phi}) \mid g(\overline{\phi})
\]
where $p \in \mathsf{AtProp}$, $f \in \mathcal{F}$, $g \in \mathcal{G}$. Terms (formulas) in $\mathcal{L}_\mathrm{DLE}$ will be denoted either by $s,t$, or by lowercase Greek letters such as $\varphi, \psi, \gamma$ etc. An $\mc{L}_\mrm{DLE}$-sequent is an expression of the form $\phi\vdash\psi$.
\end{definition}

\begin{definition}
For any tuple $(\mathcal{F}, \mathcal{G})$ of disjoint sets of function symbols as above, a {\em distributive lattice expansion} (abbreviated as DLE) is a tuple $(A, \mathcal{F}^A, \mathcal{G}^A)$ such that $A$ is a bounded distributive lattice, $\mathcal{F}^A = \{f^A\mid f\in \mathcal{F}\}$ and $\mathcal{G}^A = \{g^A\mid g\in \mathcal{G}\}$, such that every $f^A\in\mathcal{F}^A$ (respectively\ $g^A\in\mathcal{G}^A$) is an $n_f$-ary (respectively\ $n_g$-ary) operation on $A$. A DLE $(A, \mathcal{F}^A, \mathcal{G}^A)$ is {\em normal} if every $f^A\in\mathcal{F}^A$ (respectively\ $g^A\in\mathcal{G}^A$) preserves finite joins (respectively\ meets) in each coordinate with $\varepsilon_f(i)=1$ (respectively\ $\varepsilon_g(j)=1$) and reverses finite meets (respectively\ joins) in each coordinate with $\varepsilon_f(i)=\partial$ (respectively\ $\varepsilon_g(j)=\partial$).
\end{definition}

For each operator $f\in\mc{F}$ (respectively $g\in\mc{G}$) and $1\leq i\leq n_f$ (respectively $1\leq j\leq n_g$), let $f_i[-]$ (respectively $g_j[-]$) be the operator $f$ (respectively $g$) with a hole at the $i$-coordinate (respectively the $j$-th coordinate), and other coordinates be parameters. Let $f_i[a]$ (reap. $g_j[a]$) be the value of $f$ (respectively $g$) when the hole is given the input $a$. The class of all normal DLEs, denoted by $\mathbb{DLE}$, is equationally definable by distributive lattice identities and the following equations for any $f\in \mathcal{F}$ (respectively\ $g\in \mathcal{G}$) and $1\leq i\leq n_f$ (respectively\ $1\leq j\leq n_g$):
\begin{itemize}
\item[(1)] if $\varepsilon_f(i) = 1$, then $f_i[a\vee b] = f_i[a]\vee f_i[b]$ and $f_i[\bot] = \bot$,
\item[(2)] if $\varepsilon_f(i) = \partial$, then $f_i[a\wedge b] = f_i[a]\vee f_i[b]$ and $f_i[\top] = \bot$,
\item[(3)] if $\varepsilon_g(j) = 1$, then $g_j[a\wedge b] = g_j[a]\wedge g_j[b]$ and $g_j[\top] = \top$,
\item[(4)] if $\varepsilon_g(j) = \partial$, then $g_j[a\vee b] = g_j[a]\wedge g_j[b]$ and $g_j[\bot] = \top$.
\end{itemize}
Each language $\mathcal{L}_\mathrm{DLE}$ is interpreted in the appropriate class of normal DLEs. In particular, for every DLE $A$, each operation $f^A\in \mathcal{F}^A$ (respectively\ $g^A\in \mathcal{G}^A$) is finitely join-preserving (respectively\ meet-preserving) in each coordinate when regarded as a map $f^A: A^{\varepsilon_f}\to A$ (respectively\ $g^A: A^{\varepsilon_g}\to A$).
	
\begin{definition}
\label{def:DLE:logic:general}
For any language $\mathcal{L}_\mathrm{DLE}(\mathcal{F}, \mathcal{G})$, the minimal DLE-logic is the set of $\mathcal{L}_\mathrm{DLE}$-sequents $\phi\vdash\psi$, which contains the following axioms:

(1) Sequents for lattice connectives:
\begin{align*}
&p\vdash p, && \bot\vdash p, && p\vdash \top, & & p\wedge (q\vee r)\vdash (p\wedge q)\vee (p\wedge r), &\\
&p\vdash p\vee q, && q\vdash p\vee q, && p\wedge q\vdash p, && p\wedge q\vdash q, &
\end{align*}

(2) Sequents for connectives $f\in\mc{F}$ and $g\in \mc{G}$:
\[
\begin{tabular}{|c|c|}
\hline
$\varepsilon_f(i) = 1$ & $\varepsilon_f(i) = \partial$\\
\hline
$f_i[\bot] \vdash \bot$&
$f_i[\top] \vdash \bot$\\
$f_i[p\vee q] \vdash f_i[p]\vee f_i[q]$&
$f_i[p\wedge q] \vdash f_i[p]\vee f_i[q]$\\
\hline
$\varepsilon_g(j) = 1$ & $\varepsilon_g(j) = \partial$\\
\hline
$\top\vdash g_j[\top]$& 
$\top\vdash g_j[\bot]$\\
$g_j[p]\wedge g_j[q]\vdash g_j[p\wedge q]$& 
$g_j[p]\wedge g_j[q]\vdash g_j[p\vee q]$\\
\hline
\end{tabular}
\]
and is closed under the following inference rules:
\begin{displaymath}
\frac{\phi\vdash \chi\quad \chi\vdash \psi}{\phi\vdash \psi}
\quad
\frac{\phi\vdash \psi}{\phi[\chi/p]\vdash\psi[\chi/p]}
\quad
\frac{\chi\vdash\phi\quad \chi\vdash\psi}{\chi\vdash \phi\wedge\psi}
\quad
\frac{\phi\vdash\chi\quad \psi\vdash\chi}{\phi\vee\psi\vdash\chi}
\end{displaymath}
\begin{displaymath}
\frac{\phi\vdash\psi}{f_i[\phi]\vdash f_i[\psi]}{~(\varepsilon_f(i) = 1)}
\quad
\frac{\phi\vdash\psi}{f_i[\psi]\vdash f_i[\phi]}{~(\varepsilon_f(i) = \partial)}
\end{displaymath}
\begin{displaymath}
\frac{\phi\vdash\psi}{g_j[\phi]\vdash g_j[\psi]}{~(\varepsilon_g(j) = 1)}
\quad
\frac{\phi\vdash\psi}{g_j[\psi]\vdash g_j[\phi]}{~(\varepsilon_g(j) = \partial)}.
\end{displaymath}
The formula $\phi[\chi/p]$ is obtained from $\phi$ by substituting $\chi$ for $p$ uniformly. The minimal DLE-logic is denoted by $\mathbf{L}_\mathbb{DLE}$. For any DLE-language $\mathcal{L}_{\mrm{DLE}}$, by a {\em $\mathrm{DLE}$-logic} we understand any axiomatic extension of $\mathbf{L}_\mathbb{DLE}$.
\end{definition}

A sequent $\phi\vdash\psi$ is valid in a DLE $(A, \mathcal{F}^A, \mathcal{G}^A)$ if $\mu(\phi)\leq \mu(\psi)$ for every homomorphism $\mu$ from the $\mathcal{L}_\mathrm{DLE}$-algebra of formulas over $\mathsf{AtProp}$ to $A$. The notation $\mathbb{DLE}\models\phi\vdash\psi$ indicates that $\phi\vdash\psi$ is valid in every DLE. Then, by means of a routine Lindenbaum-Tarski construction, it is easy to show that the minimal DLE-logic $\mathbf{L}_\mathbb{DLE}$ is sound and complete with respect to its corresponding class of $\mc{L}_\mrm{DLE}$-algebras $\mathbb{DLE}$, i.e.\ that any sequent $\phi\vdash\psi$ is provable in $\mathbf{L}_\mathbb{DLE}$ if and only if $\mathbb{DLE}\models\phi\vdash\psi$.  

We will now specialize normal DLE-logics to strict implication logics. The {\em strict implication language} $\mc{L}_\mrm{SI}$ is identified with the DLE-language $\mathcal{L}_\mathrm{DLE}(\mathcal{F}, \mathcal{G})$ where $\mc{F}=\emptyset$ and $\mc{G} = \{\imp\}$.
The order-type of $\imp$ is $(\partial, 1)$.
The definition of normal DLE-algebra is specialized into the following definition:

\begin{definition}
An algebra $\mf{A} = (A, \wedge, \vee, \bot, \top, \imp)$ is called a {\em bounded distributive lattice with strict implication} (BDI) if its $(\wedge,\vee,\bot,\top)$-reduct is a bounded distributive lattice and $\imp$ is a binary operation on $A$ satisfying the following conditions for all $a,b,c\in A$:
\begin{itemize}
\item[](C1) $(a\imp b)\wedge(a\imp c) = a\imp (b\wedge c)$,
\item[](C2) $(a\imp c)\wedge(b\imp c) = (a\vee b)\imp c$,
\item[](C3) $a\imp \top = \top = \bot\imp a$.
\end{itemize}
Let $\mathbb{BDI}$ be the class of all BDIs. Henceforth, we also write a BDI as $(A, \imp)$ where $A$ is supposed to be a bounded distributive lattice.
\end{definition}

\begin{definition}
The algebraic sequent system $\mathbf{S}_\mathbb{BDI}$ consists of the following axioms and rules:
\begin{itemize}
\item Axioms:
\begin{align*}
&\mrm{(Id)}~\phi\vdash\phi,
\quad 
\mrm{(D)}~\phi\wedge(\psi\vee\gamma)\vdash(\phi\wedge\psi)\vee(\phi\wedge\gamma),
\\
&
(\top)~\phi\vdash\top,
\quad
(\bot)~\bot\vdash\phi,
\quad
\mrm{(N_\top)}~\top\vdash \phi\imp\top,
\quad
\mrm{(N_\bot)}~\top\vdash \bot\imp\phi,
\\
&
(\mrm{M_1})~(\phi\imp\psi)\wedge(\phi\imp\gamma)\vdash\phi\imp(\psi\wedge\gamma),
\\
&
(\mrm{M_2})~(\phi\imp\gamma)\wedge(\psi\imp\gamma)\vdash(\phi\vee\psi)\imp\gamma,
\end{align*}
\item Rules:
\[
(\mrm{M_3})~\frac{\phi \vdash\psi}{\chi\imp\phi\vdash\chi\imp\psi},
\quad
(\mrm{M_4})~\frac{\phi\vdash\psi}{\psi\imp\chi\vdash\phi\imp\chi},
\]
\[
(\wedge \mrm{L})~\frac{\phi_i\vdash\psi}{\phi_1\wedge\phi_2\vdash\psi}{(i=1,2)},
\quad
(\wedge \mrm{R})~\frac{\gamma\vdash\phi\quad\gamma\vdash\psi}{\gamma\vdash\phi\wedge\psi},
\]
\[
(\vee \mrm{L})~\frac{\phi\vdash\chi\quad\psi\vdash\chi}{\phi\vee\psi\vdash\gamma},
\quad
(\vee \mrm{R})~\frac{\psi\vdash\phi_i}{\psi\vdash\phi_1\vee\phi_2}{(i=1,2)},
\]
\[
\mrm{(cut)}~\frac{\phi \vdash\psi\quad\psi\vdash\gamma}{\phi\vdash\gamma},
\]
\end{itemize}
\end{definition}

It is easy to see that $\mathbf{S}_\mathbb{BDI}$ is a specialization of $\mbf{L}_\mathbb{DLE}$.
Some extensions of $\mathbf{S}_\mathbb{BDI}$, strict implication logics extending it, can be obtained by adding `characteristic' sequents. Table \ref{table:cs} list some characteristic sequents that are considered in literature.\footnote{These characteristic sequents may have different names or forms in literature. For example, (MP) is written as $p, p\imp q\vdash q$ where the comma means conjunction. The sequent (Fr) is named by the Frege axiom $(p\imp (q\imp r))\imp ((p\imp q)\imp (p\imp r))$.}
\begin{table}[htbp]
\caption{Some Characteristic Sequents}
\vspace{-1em}
\[
\begin{tabular}{|l|l|l|}
\hline
Name ~&~ Sequent ~&~ Literature\\
\hline
($\mrm{I}$) ~&~ $q\vdash p \imp p$~&~ \cite{CJ03}\\
($\mrm{Tr}$) ~&~ $(p\imp q)\wedge(q\imp r)\vdash p\imp r$~&~\cite{CJ03}\cite[p.44]{Res94b}\\
($\mrm{MP}$) ~&~ $p\wedge(p\imp q)\vdash q$~&~\cite{CJ03,IK07}\\
($\mrm{W}$) ~&~ $p \vdash q\imp p$~&~\cite{CJ03}\cite[p.34]{Res94b}\\
($\mrm{RT}$) ~&~ $p \imp q \vdash r\imp(p\imp q)$~&~\cite{CJ03,IK07}\\
($\mrm{B}$) ~&~ $p \imp q \vdash(r\imp p)\imp(r\imp q)$~&~\cite[p.32]{Res94b}\\
($\mrm{B}'$) ~&~ $p \imp q \vdash(q\imp r)\imp(p\imp r)$~&~\cite[p.32]{Res94b}\\
($\mrm{C}$) ~&~ $p \imp(q\imp r) \vdash q\imp(p\imp r)$~&~\cite[p.32]{Res94b}\\
($\mrm{Fr}$) ~&~ $p \imp(q\imp p) \vdash(p\imp q)\imp(p\imp r)$~&~\cite[p.44]{Res94b}\\
($\mrm{W}'$) ~&~ $p \imp(p\imp q) \vdash p\imp q$~&~\cite[p.44]{Res94b}\\
($\mrm{Sym}$) ~&~ $p\vdash((p\imp q)\imp r)\vee q$~&~\cite{IK07}\\
($\mrm{Euc}$) ~&~ $\top\vdash((p\imp q)\imp r)\vee(p\imp q)$~&~\cite{IK07}\\
($\mrm{D}$) ~&~ $\top\imp\bot\vdash\bot$~&~\cite{IK07}\\
\hline
\end{tabular}
\]
\label{table:cs}
\end{table}
For any sequent system $\mbf{S}$ and a set of sequents $\Sigma$, the notation $\mathbf{S}+\Sigma$ stands for the system obtained from $\mathbf{S}$ by adding all instances of sequents in $\Sigma$ as new axioms. Strict implication logics in Table \ref{table:sl} can be obtained using these characteristic sequents. Some of them are considered in literature.\footnote{These logics are presented in various ways in literature as Hilbert-style systems, natural deduction systems or sequent systems. The name $\msf{GK}^I$ \cite{IK07} stands for the Gentzen-style sequent calculus for the minimal strict implication logic under binary relational semantics which can be embedded into the minimal normal modal logic $\msf{K}$.} 

\begin{table}[htbp]
\caption{Some Strict Implication Logics}
\vspace{-1em}
\[
\begin{tabular}{|l|l|l|}
\hline
Name ~&~ System ~&~ Literature\\
\hline
$\mathbf{S}_\mathbb{WH}$, $\msf{GK}^I$ ~&~ $\mathbf{S}_\mathbb{BDI}+(\mrm{I})+(\mrm{Tr})$~&~ \cite{CJ03,IK07,corsi87,Dosen93,ys16}\\
$\mathbf{S}_\mathbb{T}$~&~ $\mathbf{S}_\mathbb{BDI}+(\mrm{MP})$~&~\\
$\mathbf{S}_\mathbb{W}$ ~&~ $\mathbf{S}_\mathbb{BDI}+(\mrm{W})$~&~\\
$\mathbf{S}_\mathbb{RT}$ ~&~ $\mathbf{S}_\mathbb{BDI}+(\mrm{RT})$~&~\\
$\mathbf{S}_\mathbb{B}$ ~&~ $\mathbf{S}_\mathbb{BDI}+(\mrm{B})$~&~\\
$\mathbf{S}_\mathbb{B'}$ ~&~ $\mathbf{S}_\mathbb{BDI}+(\mrm{B'})$~&~\\
$\mathbf{S}_\mathbb{C}$ ~&~ $\mathbf{S}_\mathbb{BDI}+(\mrm{C})$~&~\\
$\mathbf{S}_\mathbb{FR}$ ~&~ $\mathbf{S}_\mathbb{BDI}+(\mrm{Fr})$~&~\\
$\mathbf{S}_\mathbb{W'}$ ~&~ $\mathbf{S}_\mathbb{BDI}+(\mrm{W'})$~&~\\
$\mathbf{S}_\mathbb{SYM}$ ~&~ $\mathbf{S}_\mathbb{BDI}+(\mrm{Sym})$~&~\\
$\mathbf{S}_\mathbb{EUC}$ ~&~ $\mathbf{S}_\mathbb{BDI}+(\mrm{Euc})$~&~\\
$\mathbf{S}_\mathbb{BCA}$~&~ $\mathbf{S}_\mathbb{T}+(\mrm{W})$~&~\cite{Suzuki99,CJ03,IK07,Visser81,AR95,AR98,IKK01}\\
$\mathsf{GKT}^I$ ~&~ $\mathsf{GK}^I+(\mrm{MP})$~&~\cite{corsi87,IK07}\\
$\mathsf{GK4}^I$ ~&~ $\mathsf{GK}^I+(\mrm{RT})$~&~\cite{corsi87,IK07}\\
$\mathsf{GS4}^I$ ~&~ $\mathsf{GKT}^I+(\mrm{RT})$~&~\cite{corsi87,IK07}\\
$\mathsf{GKB}^I$ ~&~ $\mathsf{GK}^I+(\mrm{Sym})$~&~\cite{corsi87,IK07}\\
$\mathsf{GK5}^I$ ~&~ $\mathsf{GK}^I+(\mrm{Euc})$~&~\cite{IK07}\\
$\mathsf{GK45}^I$ ~&~ $\mathsf{GK5}^I+(\mrm{RT})$~&~\cite{IK07}\\
$\mathsf{GKS5}^I$ ~&~ $\mathsf{GK45}^I+(\mrm{W})$~&~\cite{IK07}\\
$\mathsf{GK4}^{I+}$ ~&~ $\mathsf{GK}^I+(\mrm{W})$~&~\cite{IK07}\\
$\mathsf{GKD}^I$ ~&~ $\mathsf{GK}^I+(\mrm{D})$~&~\cite{corsi87,IK07}\\
\hline
\end{tabular}
\]
\label{table:sl}
\end{table}

Each sequent $\phi\vdash\psi$ defines a class of BDIs. Each strict implication logic $\mbf{S}_\mathbb{BDI}+\Sigma$ defines a class of BDIs denoted by $\msf{Alg}(\Sigma)$. For example, some subvarieties 
are considered in \cite{CJ05}. A BDI $(A, \imp)$ is called a {\em weak Heyting algebra} (WH-algebra) if the following conditions are satisfied for all $a, b, c \in A$:
\begin{itemize}
\item[](C4) $b\leq a\imp a$.
\item[](C5) $(a\imp b)\wedge(b\imp c)\leq (a\imp c)$.
\end{itemize}
Let $\mathbb{WH}$ be the class of all WH-algebras. 
A $\msf{wKT}_\sigma$-algebra is a WH-algebra $(A, \imp)$ satisfying the condition $a\wedge(a\imp b)\leq b$ for all $a, b\in A$.
A {\em basic algebra} is a WH-algebra $(A,\imp)$ satisfying the condition $a\leq b\imp a$ for all $a,b\in A$. Let $\mathbb{T}$ and $\mathbb{BCA}$ be  the classes of all $\msf{wKT}_\sigma$-algebras and basic algebras respectively. The variety of Heyting algebras is a subvariety of $\mathbb{BCA}$, i.e., it is the class of all basic algebras $(A,\imp)$ satisfying the condition $\top\imp a\leq a$ for all $a\in A$ (cf.~e.g. \cite{AR98,AA04}).

As a corollary of the soundness and completeness of DLE-logics with respect to their $\mc{L}_\mrm{DLE}$-algebras, one gets the following theorem immediately:

\begin{theorem}
For any strict implication logic $\mbf{S}_\mathbb{BDI}+\Sigma$, an $\mc{L}_\mrm{SI}$-sequent $\phi\vdash\psi$ is derivable in $\mbf{S}_\mathbb{BDI}+\Sigma$ if and only if $\msf{Alg}(\Sigma)\models \phi\vdash\psi$.
\end{theorem}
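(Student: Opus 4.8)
The plan is to obtain this as a direct specialization of the soundness and completeness of the minimal DLE-logic $\mbf{L}_\mathbb{DLE}$ with respect to $\mathbb{DLE}$ noted above, now carried out relative to the extension by $\Sigma$. First I would fix the translation dictionary: $\mc{L}_\mrm{SI}$ is the DLE-language with $\mc{F}=\emptyset$ and $\mc{G}=\{\imp\}$, where $\imp$ has order-type $(\partial,1)$; under this identification $\mbf{S}_\mathbb{BDI}$ is exactly the minimal DLE-logic for this signature (as already remarked), and $\mathbb{BDI}$ is exactly the corresponding class of normal DLEs, since (C1)--(C3) are precisely the normality equations (3)--(4) instantiated at the single binary operator $g=\imp$ with $\varepsilon_g=(\partial,1)$ --- recall that the first coordinate being of type $\partial$ is what turns ``meet'' into ``join'' and ``$\top$'' into ``$\bot$'' in those clauses. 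This already disposes of the case $\Sigma=\emptyset$.

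For arbitrary $\Sigma$ I would rerun the Lindenbaum--Tarski construction relative to $\mbf{S}_\mathbb{BDI}+\Sigma$. Put $\phi\equiv\psi$ iff both $\phi\vdash\psi$ and $\psi\vdash\phi$ are derivable there. Using (Id), (cut), the lattice rules and the monotonicity rules $\mrm{M_3},\mrm{M_4}$, one checks that $\equiv$ is a congruence for $\wedge,\vee,\imp$; the quotient $\mf{A}_\Sigma$ is a bounded distributive lattice by the lattice axioms, and it is a BDI because $\mrm{M_1}$ together with $\mrm{M_3}$ applied to $\psi\wedge\gamma\vdash\psi$ and $\psi\wedge\gamma\vdash\gamma$ (and $\wedge\mrm{R}$) yields both inequalities of (C1), dually $\mrm{M_2}$ with $\mrm{M_4}$ yields (C2), and $(\top),(\bot),\mrm{N_\top},\mrm{N_\bot}$ yield (C3). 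Since every sequent of $\Sigma$ is an axiom of the system, the canonical valuation $\mu(p)=[p]$ witnesses $\mf{A}_\Sigma\in\msf{Alg}(\Sigma)$, and by construction $\mu(\phi)\leq\mu(\psi)$ holds in $\mf{A}_\Sigma$ iff $\phi\vdash\psi$ is derivable.

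The two halves of the biconditional then follow. Soundness is the routine induction on derivations: the axioms of $\mbf{S}_\mathbb{BDI}$ are valid in every BDI (from the bounded-distributive-lattice structure together with (C1)--(C3)), the rules preserve validity (the $\imp$-rules because $\imp$ reverses the order in its first argument and preserves it in its second, which is a consequence of (C1)--(C2)), and each sequent of $\Sigma$ is valid throughout $\msf{Alg}(\Sigma)$ by definition; hence derivability of $\phi\vdash\psi$ implies $\msf{Alg}(\Sigma)\models\phi\vdash\psi$. Conversely, if $\msf{Alg}(\Sigma)\models\phi\vdash\psi$ then in particular this holds in $\mf{A}_\Sigma$ under $\mu$, so $[\phi]\leq[\psi]$, i.e.\ $\phi\vdash\psi$ is derivable. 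The only point requiring any attention --- and it is entirely routine --- is the verification that $\equiv$ respects $\imp$ and that $\mf{A}_\Sigma$ satisfies both directions of (C1) and (C2); everything else is a transcription of the DLE-level argument, which is why the theorem follows at once from the general result once the specialization is set up.
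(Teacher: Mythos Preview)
Your proposal is correct and is exactly the approach the paper intends: the paper presents this theorem without proof, as an immediate corollary of the Lindenbaum--Tarski soundness and completeness of DLE-logics specialized to the strict-implication signature, and you have simply unpacked that routine construction for arbitrary axiomatic extensions~$\Sigma$. One minor wording issue: to conclude $\mf{A}_\Sigma\in\msf{Alg}(\Sigma)$ you need validity of each $\sigma\in\Sigma$ under \emph{every} assignment, not just the canonical one; this follows because $\mbf{S}_\mathbb{BDI}+\Sigma$ adds all substitution instances of $\Sigma$ and every element of $\mf{A}_\Sigma$ is of the form $[\chi]$, so an arbitrary assignment factors as a substitution followed by the canonical quotient map.
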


\subsection{The expanded language $\mathcal{L}_\mathrm{DLE}^*$}
Any given language $\mathcal{L}_\mathrm{DLE} = \mathcal{L}_\mathrm{DLE}(\mathcal{F}, \mathcal{G})$ can be extended to the language $\mathcal{L}_\mathrm{DLE}^* = \mathcal{L}_\mathrm{DLE}(\mathcal{F}^*, \mathcal{G}^*)$, where $\mathcal{F}^*\supseteq \mathcal{F}$ and $\mathcal{G}^*\supseteq \mathcal{G}$ are obtained by expanding $\mathcal{L}_\mathrm{DLE}$ with the following connectives:

\begin{enumerate}
\item the Heyting implications $\leftarrow_H$ and $\rightarrow_H$, the intended interpretations of which are the right residuals of $\wedge$ in the first and second coordinate respectively, and $>\hspace{-0.5em}-$ and $ -\hspace{-0.5em}<$, the intended interpretations of which are the left residuals of $\vee$ in the first and second coordinate, respectively;

\item the $n_f$-ary connective $f^\sharp_i$ for $0\leq i\leq n_f$, the intended interpretation of which is the right residual of $f\in\mathcal{F}$ in its $i$th coordinate if $\varepsilon_f(i) = 1$ (respectively\ its Galois-adjoint if $\varepsilon_f(i) = \partial$);
		\item the $n_g$-ary connective $g^\flat_i$ for $0\leq i\leq n_g$, the intended interpretation of which is the left residual of $g\in\mathcal{G}$ in its $i$th coordinate if $\varepsilon_g(i) = 1$ (respectively\ its Galois-adjoint if $\varepsilon_g(i) = \partial$).
\end{enumerate}

We stipulate that $>\hspace{-0.5em}-$, $-\hspace{-0.5em}<$ $\in \mathcal{F}^*$, that $\rightarrow_H, \leftarrow_H\in \mathcal{G}^*$, and moreover, that
	$f^\sharp_i\in\mathcal{G}^*$ if $\varepsilon_f(i) = 1$, and $f^\sharp_i\in\mathcal{F}^*$ if $\varepsilon_f(i) = \partial$. Dually, $g^\flat_j\in\mathcal{F}^*$ if $\varepsilon_g(i) = 1$, and $g^\flat_j\in\mathcal{G}^*$ if $\varepsilon_g(j) = \partial$. The order-type assigned to the additional connectives is predicated on the order-type of their intended interpretations.  

\begin{definition}
For any language $\mathcal{L}_\mathrm{DLE}(\mathcal{F}, \mathcal{G})$, the {\em minimal} $\mathcal{L}_\mathrm{DLE}^*$-{\em logic} is defined by specializing Definition \ref{def:DLE:logic:general} to the language $\mathcal{L}_\mathrm{DLE}^* = \mathcal{L}_\mathrm{DLE}(\mathcal{F}^*, \mathcal{G}^*)$ and closing under the following additional rules:
\begin{enumerate}
\item Residuation rules for lattice connectives:
$$
\begin{array}{cccc}
\AxiomC{$\phi\wedge\psi\vdash \chi$}
\doubleLine
\UnaryInfC{$\psi\vdash \phi\rightarrow_H \chi$}
\DisplayProof
~
&~
\AxiomC{$\phi\wedge\psi\vdash \chi$}
\doubleLine
\UnaryInfC{$\phi\vdash \chi\leftarrow_H \psi$}
\DisplayProof
~
&~
\AxiomC{$\phi\vdash \psi\vee\chi$}
\doubleLine
\UnaryInfC{$\psi -\hspace{-0.5em}< \phi \vdash \chi$}
\DisplayProof
~&~
\AxiomC{$\phi\vdash \psi\vee\chi$}
\doubleLine
\UnaryInfC{$\phi >\hspace{-0.5em}- \chi \vdash \psi$}
\DisplayProof
\end{array}
$$
Notice that the rules for $\rightarrow_H$ and $\leftarrow_H$ are interderivable, since $\wedge$ is commutative; similarly, the rules for $-\hspace{-0.5em}<$ and $>\hspace{-0.5em}-$ are interderivable, since $\vee$ is commutative.
\item Residuation rules for $f\in \mathcal{F}$ and $g\in \mathcal{G}$:
\[
\AxiomC{$f_i[\phi] \vdash \psi$}
\doubleLine
\RightLabel{\small$(\varepsilon_f(i) = 1),$}
\UnaryInfC{$\phi\vdash f^\sharp_i[\psi]$}
\DisplayProof
\quad
\AxiomC{$\phi \vdash g_j[\psi]$}
\doubleLine
\RightLabel{\small$(\varepsilon_g(j) = 1),$}
\UnaryInfC{$g^\flat_j[\phi]\vdash \psi$}
\DisplayProof
\]
\[
\AxiomC{$f_i[\phi]\vdash \psi$}
\doubleLine
\RightLabel{\small$(\varepsilon_f(i) = \partial),$}
\UnaryInfC{$f^\sharp_i[\psi]\vdash \phi$}
\DisplayProof
\quad
\AxiomC{$\phi \vdash g_j[\psi]$}
\doubleLine
\RightLabel{\small ($\varepsilon_g(j) = \partial).$}
\UnaryInfC{$\psi\vdash g^\flat_j[\phi]$}
\DisplayProof
\]
\end{enumerate}
The double line in each rule above indicates that the rule is invertible. Let $\mathbf{L}_\mathbb{DLE}^*$ be the minimal $\mathcal{L}_\mathrm{DLE}^*$-logic. 
\end{definition}
	
The algebraic semantics of $\mathbf{L}_\mathbb{DLE}^*$ is given by the class of all $\mathcal{L}_\mathrm{DLE}^*$-algebras, defined as $(H, \mathcal{F}^*, \mathcal{G}^*)$ such that $H$ is a bi-Heyting algebra (because there are right adjoints or residuals of $\wedge$ and $\vee$ in the algebra) and moreover,
\begin{enumerate}			
\item for every $f\in \mathcal{F}$ s.t.\ $n_f\geq 1$, all $a_i, b\in H$ with $1\leq i\leq n_f$,
\begin{itemize}
\item[--] if $\varepsilon_f(i) = 1$, then $f_i[a_i]\leq b$ iff $a_i\leq f^\sharp_i[b]$;
\item[--] if $\varepsilon_f(i) = \partial$, then $f_i[a_i]\leq b$ iff $a_i\leq^\partial f^\sharp_i[b]$.
\end{itemize}
\item for every $g\in \mathcal{G}$ s.t.\ $n_g\geq 1$, any $a_j,b\in H$ with $1\leq j\leq n_g$,
\begin{itemize}
\item[--] if $\varepsilon_g(j) = 1$, then $b\leq g_j[a_j]$ iff $g^\flat_j[b]\leq a_j$.
\item[--] if $\varepsilon_g(j) = \partial$, then $b\leq g_j[a_j]$ iff $g^\flat_i[b]\leq^\partial a_j$.
\end{itemize}
\end{enumerate}
It is routine to prove using the Lindenbaum-Tarski construction that $\mathbf{L}_\mathbb{DLE}^*$ is sound and complete with respect to.\ the class of all $\mathcal{L}_\mathrm{DLE}^*$-algebras. 
		
There two ways to specialize the language $\mc{L}_\mathbb{DLE}^*$ and hence the logic $\mbf{L}_\mathbb{DLE}$ to the strict implication language: a full and a partial specialization. The full specialization results a language of bi-intuitionsitic Lambek calculus $\mc{L}_\mrm{SI}^*$ which will not be explored in this paper. The partial specialization is to add the connectives $\{\bu, \imp, \limp\}$ to $\mc{L}_\mrm{SI}$ and get the language of full Lambek calculus, as we mentioned in the introduction, denoted by $\mc{L}_\mrm{LC}$. 
Clearly $\mc{L}_\mrm{SI}\subsetneq \mc{L}_\mrm{LC}\subsetneq\mc{L}_\mrm{SI}^*$. The partial specialization of $\mathcal{L}_\mathrm{DLE}^*$-algebras to the language $\mc{L}_\mrm{LC}$ is given in the following definition:

\begin{definition}
An algebra $\mf{A} = (A, \wedge,\vee,\top,\bot,\imp,\bu,\limp)$ is called a {\em bounded distributive lattice-ordered residuated groupoid} (BDRG), if $(A, \wedge,\vee,\top,\bot)$ is a bounded distributive lattice, and $\bu, \imp,\limp$ are binary operations on $A$ satisfying the following residuation law for all $a,b,c\in A$:
\begin{center}
(RES) $a\bu b\leq c$ iff $b\leq a\imp c$ iff $a\leq c\limp b$.
\end{center}
Let $\mathbb{BDRG}$ be the class of all BDRGs.
\end{definition}

\begin{definition}
The algebraic sequent calculus $\msf{BDFNL}$ consists of the following axioms and rules:
\begin{itemize}
\item Axioms:
\[
\mrm{(Id)}~\phi\vdash\phi,
\quad 
(\top)~\phi\vdash\top,
\quad
(\bot)~\bot\vdash\phi,
\]
\[
\mrm{(D)}~\phi\wedge(\psi\vee\gamma)\vdash(\phi\wedge\psi)\vee(\phi\wedge\gamma),
\]
\item Rules:
\[
(\wedge \mrm{L})~\frac{\phi_i\vdash\psi}{\phi_1\wedge\phi_2\vdash\psi}~(i=1,2),
\quad
(\wedge \mrm{R})~\frac{\gamma\vdash\phi\quad\gamma\vdash\psi}{\gamma\vdash\phi\wedge\psi},
\]
\[
(\vee \mrm{L})~\frac{\phi\vdash\gamma\quad\psi\vdash\gamma}{\phi\vee\psi\vdash\gamma},
\quad
(\vee \mrm{R})~\frac{\psi\vdash\phi_i}{\psi\vdash\phi_1\vee\phi_2}~(i=1,2),
\]
\[
(\mrm{Res1})~\frac{\phi\bu\psi\vdash\gamma}{\psi\vdash\phi\imp\gamma},
\quad
(\mrm{Res2})~\frac{\psi\vdash\phi\imp\gamma}{\phi\bu\psi\vdash\gamma},
\]
\[
(\mrm{Res3})~\frac{\phi\bu\psi\vdash\gamma}{\phi\vdash\gamma\limp\psi},
\quad
(\mrm{Res4})~\frac{\phi\vdash\gamma\limp\psi}{\phi\bu\psi\vdash\gamma},
\]
\[
(\mrm{cut})~\frac{\phi \vdash\psi\quad\psi\vdash\gamma}{\phi\vdash\gamma}.
\]
\end{itemize}
\end{definition}

\begin{fact}
The following monotonicity rules are derivable in $\msf{BDFNL}$:
\[
(1)\frac{\phi\vdash\psi}{\phi\bu\chi\vdash\psi\bu\chi},
\quad
(2)\frac{\phi\vdash\psi}{\chi\bu\phi\vdash\chi\bu\psi},
\]
\[
(3)\frac{\phi\vdash\psi}{\chi\imp\phi\vdash\chi\imp\psi},
\quad
(4)\frac{\phi\vdash\psi}{\psi\imp\chi\vdash\phi\imp\chi}.
\]
\end{fact}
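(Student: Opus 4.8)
The plan is to derive all four rules from the identity axiom $\mrm{(Id)}$, the cut rule $\mrm{(cut)}$, and the four residuation rules $\mrm{(Res1)}$--$\mrm{(Res4)}$, which jointly encode the adjunctions $(\mrm{RES})$: $\phi\bu\psi\vdash\gamma$ iff $\psi\vdash\phi\imp\gamma$ iff $\phi\vdash\gamma\limp\psi$. The guiding idea is the standard one from adjunction theory: if $L\dashv R$, then the ``counit'' inequality $LRx\vdash x$ is derivable — it is the transpose of $Rx\vdash Rx$, i.e.\ an instance of $\mrm{(Id)}$ — and monotonicity of $R$ then follows by cutting with $x\vdash y$ and transposing $LRx\vdash y$ back up. No appeal to the algebraic semantics is needed; all derivations are short.

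I would first treat the three ``covariant'' rules $(1)$, $(2)$, $(3)$, each by the same three-line pattern: transpose $\mrm{(Id)}$ down, cut with the premiss, transpose back up. For $(3)$: start from $\chi\imp\phi\vdash\chi\imp\phi$; apply $\mrm{(Res2)}$ to get $\chi\bu(\chi\imp\phi)\vdash\phi$; cut with the premiss $\phi\vdash\psi$ to get $\chi\bu(\chi\imp\phi)\vdash\psi$; apply $\mrm{(Res1)}$ to conclude $\chi\imp\phi\vdash\chi\imp\psi$. For $(2)$: from $\chi\bu\psi\vdash\chi\bu\psi$ apply $\mrm{(Res1)}$ to get $\psi\vdash\chi\imp(\chi\bu\psi)$, cut with $\phi\vdash\psi$, then apply $\mrm{(Res2)}$ to obtain $\chi\bu\phi\vdash\chi\bu\psi$. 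Rule $(1)$ is the mirror image, using $\mrm{(Res3)}$ and $\mrm{(Res4)}$ (the left residual $\limp$) in place of $\mrm{(Res1)}$ and $\mrm{(Res2)}$: from $\psi\bu\chi\vdash\psi\bu\chi$ get $\psi\vdash(\psi\bu\chi)\limp\chi$, cut with $\phi\vdash\psi$, and apply $\mrm{(Res4)}$ to get $\phi\bu\chi\vdash\psi\bu\chi$.

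The one point that needs a little care is rule $(4)$, since $\imp$ is \emph{antitone} in its first coordinate, so the conclusion cannot be reached by a single transposition of $\mrm{(Id)}$. Here I would bootstrap on the already-derived rule $(1)$: from $\psi\imp\chi\vdash\psi\imp\chi$ apply $\mrm{(Res2)}$ to get $\psi\bu(\psi\imp\chi)\vdash\chi$; apply rule $(1)$ to the premiss $\phi\vdash\psi$ to get $\phi\bu(\psi\imp\chi)\vdash\psi\bu(\psi\imp\chi)$; cut these two sequents to obtain $\phi\bu(\psi\imp\chi)\vdash\chi$; and finish with $\mrm{(Res1)}$, yielding $\psi\imp\chi\vdash\phi\imp\chi$. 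Thus the only (minor) obstacle is organisational — one must derive $(4)$ \emph{after} $(1)$ rather than in isolation — and the whole proof is a routine exercise in pushing sequents through the residuation rules.
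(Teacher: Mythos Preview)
Your proposal is correct and takes essentially the same approach as the paper: the paper derives (1) and (3) by exactly the three-step pattern you describe (transpose $\mrm{(Id)}$ via a residuation rule, cut with the premiss, transpose back), and leaves (2) and (4) to the reader with ``derived similarly''. Your explicit treatments of (2) and (4), including the observation that (4) is most cleanly obtained by bootstrapping on the already-derived rule (1), are accurate and fill in what the paper omits.
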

\begin{proof}
Here we derive only (1) and (3). The remaining rules are derived similarly.
\[
\AxiomC{$\phi\vdash\psi$}
\AxiomC{$\psi\bu\chi\vdash\psi\bu\chi$}
\RightLabel{\small (Res3)}
\UnaryInfC{$\psi\vdash(\psi\bu\chi)\limp \chi$}
\RightLabel{\small (cut)}
\BinaryInfC{$\phi\vdash(\psi\bu\chi)\limp\chi$}
\RightLabel{\small (Res4)}
\UnaryInfC{$\phi\bu\chi\vdash\psi\bu\chi$}
\DisplayProof
\quad
\AxiomC{$\chi\imp\phi\vdash\chi\imp\phi$}
\RightLabel{\small (Res2)}
\UnaryInfC{$\chi\bu(\chi\imp\phi)\vdash\phi$}
\AxiomC{$\phi\vdash\psi$}
\RightLabel{\small (cut)}
\BinaryInfC{$\chi\bu(\chi\imp\phi)\vdash\psi$}
\RightLabel{\small (Res1)}
\UnaryInfC{$\chi\imp\phi\vdash\chi\imp\psi$}
\DisplayProof
\]
This completes the proof.
\qed
\end{proof}

The interpretation of $\mc{L}_\mrm{LC}$-sequents in BDRGs is standard, i.e., $\vdash$ is interpreted as the lattice order $\leq$. By $\mathbb{BDRG}\models\phi\vdash\psi$ we mean that $\phi\vdash\psi$ is valid in all BDRGs. An $\mc{L}_\mrm{LC}$-{\em supersequent} is an expression of the form $\Phi\Imp\chi\vdash\delta$ where $\Phi$ is a set of $\mc{L}_\mrm{LC}$-sequents.
We say that $\Phi\Imp\chi\vdash\delta$ is {\em derivable} in $\msf{BDFNL}$ if there exists a derivation of $\chi\vdash\delta$ from assumptions in $\Phi$. We say that $\Phi\Imp \chi\vdash\delta$ is {\em valid} in a BDRG $\mf{A}$ if $\mf{A}\models\Phi$ implies $\mf{A}\models\chi\vdash\psi$. We use $\mathbb{BDRG}\models\Phi\Imp\chi\vdash\delta$ to denote that $\chi\vdash\delta$ is valid in all BDRGs. By the Lindenbaum-Tarski construction, one gets the following result (cf.~\cite{Bus06}):

\begin{theorem}[strong completeness]
For every $\mc{L}_\mrm{LC}$-supersequent  $\Phi\Imp \chi\vdash\delta$, $\Phi\Imp\chi\vdash\delta$ is derivable in $\msf{BDFNL}$ if and only if $\mathbb{BDRG}\models \Phi\Imp\chi\vdash\delta$.
\end{theorem}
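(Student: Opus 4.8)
The plan is a routine Lindenbaum--Tarski argument: the soundness direction (derivable $\Rightarrow$ valid) is a straightforward induction on derivations, and the completeness direction proceeds by constructing a canonical BDRG out of $\msf{BDFNL}$-provability from $\Phi$. Throughout I read the validity of a supersequent relative to a single assignment, i.e.\ $\Phi\Imp\chi\vdash\delta$ is valid in $\mf{A}$ iff every homomorphism $\mu$ from the $\mc{L}_\mrm{LC}$-term algebra into $\mf{A}$ that validates all sequents in $\Phi$ also validates $\chi\vdash\delta$; $\mathbb{BDRG}\models\Phi\Imp\chi\vdash\delta$ then quantifies over all $\mf{A}$.

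For soundness, suppose $\Phi\Imp\chi\vdash\delta$ is derivable, fix a BDRG $\mf{A}$ and a homomorphism $\mu$ validating $\Phi$, and show by induction on the height of the derivation of $\chi\vdash\delta$ from $\Phi$ that $\mu(\phi)\leq\mu(\psi)$ holds for every sequent $\phi\vdash\psi$ occurring in it. Leaves are either instances of $\mrm{(Id)}$, $(\top)$, $(\bot)$, $\mrm{(D)}$, which hold in every BDRG since its lattice reduct is bounded and distributive, or members of $\Phi$, which hold by the choice of $\mu$. In the inductive step, $(\wedge\mrm{L})$, $(\wedge\mrm{R})$, $(\vee\mrm{L})$, $(\vee\mrm{R})$ are justified by the (semi)lattice laws, $(\mrm{Res1})$--$(\mrm{Res4})$ by $(\mrm{RES})$, and $(\mrm{cut})$ by transitivity of $\leq$. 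Hence $\mu(\chi)\leq\mu(\delta)$, and since $\mu$ was arbitrary, $\mathbb{BDRG}\models\Phi\Imp\chi\vdash\delta$.

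For completeness, assume $\Phi\Imp\chi\vdash\delta$ is not derivable. Put $\phi\preceq\psi$ iff $\Phi\Imp\phi\vdash\psi$ is derivable; this is a preorder, with reflexivity given by $\mrm{(Id)}$ and transitivity by grafting two derivations with $(\mrm{cut})$. Let $\equiv$ be the induced equivalence and let $\mf{A}$ be the quotient of the $\mc{L}_\mrm{LC}$-term algebra by $\equiv$, ordered by $[\phi]\leq[\psi]\Leftrightarrow\phi\preceq\psi$. The key verification is that $\wedge,\vee,\imp,\bu,\limp$ and the constants $\top,\bot$ descend to well-defined operations on $\equiv$-classes: for $\wedge,\vee$ this uses $(\wedge\mrm{L})$, $(\wedge\mrm{R})$, $(\vee\mrm{L})$, $(\vee\mrm{R})$, and for $\bu,\imp,\limp$ it uses the monotonicity rules of the Fact above together with the two evident analogues for $\limp$. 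One then checks, just as in the soundness step, that $\mf{A}$ is a bounded distributive lattice (the nontrivial inclusion of distributivity coming from $\mrm{(D)}$, the converse one holding in every lattice) and that $(\mrm{RES})$ holds in $\mf{A}$: $[\phi]\bu[\psi]\leq[\gamma]$ iff $\Phi\Imp\phi\bu\psi\vdash\gamma$ is derivable iff $\Phi\Imp\psi\vdash\phi\imp\gamma$ is derivable (forward by $(\mrm{Res1})$, backward by $(\mrm{Res2})$) iff $[\psi]\leq[\phi]\imp[\gamma]$, and symmetrically for $\limp$ via $(\mrm{Res3})$/$(\mrm{Res4})$. Thus $\mf{A}\in\mathbb{BDRG}$.

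Let $\mu_0$ be the homomorphism with $\mu_0(p)=[p]$ for $p\in\msf{AtProp}$; an easy induction on formula structure gives $\mu_0(\phi)=[\phi]$, so $\mu_0(\phi)\leq\mu_0(\psi)$ iff $\Phi\Imp\phi\vdash\psi$ is derivable. Every $\sigma\vdash\tau\in\Phi$ is derivable from $\Phi$ by the one-node derivation, so $\mu_0$ validates $\Phi$; but $\Phi\Imp\chi\vdash\delta$ is not derivable, so $\mu_0(\chi)\not\leq\mu_0(\delta)$, i.e.\ $\mf{A}$ together with $\mu_0$ refutes $\Phi\Imp\chi\vdash\delta$, and contraposition finishes the proof. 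There is no genuine obstacle here; the only points demanding care are that the connectives are congruences for $\equiv$ (which is exactly where the monotonicity rules for $\bu,\imp,\limp$ are needed, so that $\mf{A}$ is well-defined) and that the semantics of supersequents is kept relative to a fixed assignment, an "under all homomorphisms" reading of $\mf{A}\models\Phi$ being too strong for the completeness half to go through.
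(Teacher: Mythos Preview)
Your proof is correct and is precisely the Lindenbaum--Tarski construction the paper invokes (the paper does not spell out a proof but simply appeals to this construction, citing \cite{Bus06}). Your explicit flagging of the local reading of supersequent validity is on point: the paper's one-line definition (``$\mf{A}\models\Phi$ implies $\mf{A}\models\chi\vdash\delta$'') could be read globally, but the theorem is only true under the per-assignment reading you adopt, and the paper's own later use of supersequents (e.g.\ in the $\msf{ALC}$ calculations, where quasi-inequalities are unpacked as $\forall\overline{a}(\ldots\Rightarrow\ldots)$ with the implication inside the quantifier) confirms that this is the intended semantics.
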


\subsection{Semantic conservativity via canonical extension}
In this subsection, we will present general results on the semantic conservativity of $\mc{L}^*_\mrm{DLE}$-logics over $\mc{L}_\mrm{DLE}$ logics. The proofs of the conservativity is by 
canonical extensions of DLEs. As a special case, the Lambek calculus $\msf{BDFNL}$ is a conservative extension of the strict implication logic $\mbf{S}_\mathbb{BDI}$.
First of all, let us recall some concepts from \cite{GH2001}. Given a bounded lattice $L$, a {\em completion} of $L$ is a complete lattice $C$ of which $L$ is a sublattice. For a completion $C$ of a lattice $L$, an element $x\in C$ is called {\em closed} if $x = \bigwedge_C F$ for some $F\sub L$; and $x\in C$ is called {\em open} if $x = \bigvee_C I$ for some $I\sub L$. The set of all closed elements in $C$ is denoted by $K({C})$, and the set of all open elements in $C$ by $O({C})$. A completion $C$ of a lattice $L$ is called 
\begin{itemize}
\item[--] {\em dense} if every element of $C$ can be represented both as a join of meets and as a meet of joins of elements from $L$.
\item[--] {\em compact} if for any $S\sub K({C})$ and $T\sub O({C})$, $\bigwedge S\leq \bigvee T$ iff there are finite subsets $S'\sub S$ and $T'\sub T$ with $\bigwedge S'\leq \bigvee T'$. 
\end{itemize}
A {\em canonical extension} of a lattice $L$ is a dense and compact completion of $L$. 
Every lattice has a canonical extension, denoted by $L^\delta$, which is unique up to an isomorphism \cite{GH2001}. 

A distributive lattice is {\em perfect} if it is complete, completely distributive and completely join-generated by the collection of its completely join-prime elements. Equivalently, a distributive lattice is perfect if and only if it is isomorphic to the lattice of upsets of some poset. A normal DLE is {\em perfect} if the underling distributive lattice is perfect, and each $f$-operation (respectively\ $g$-operation) is completely join-preserving (respectively\ meet-preserving) or completely meet-reversing (respectively\ join-reversing) in each coordinate.
It is well known that the canonical extension of a bounded distributive lattice is perfect (cf.\ e.g.\ \cite[Definition 2.14]{GeNaVe05}). 

Let $h: L\imp M$ be any map from a lattice $L$ to $M$. Following \cite[Definition 4.1]{GH2001}, one can define two maps $h^\sigma,h^\pi:L^\delta\imp M^\delta$ by setting: 
\begin{align*}
h^\sigma(u) &= \bigvee\{\bigwedge\{h(a) : a\in L~\&~x\leq a\leq y\} : K(L^\delta)\ni x\leq u\leq y \in O(L^\delta)\}.
\\
h^\pi(u) &= \bigwedge\{\bigvee\{h(a) : a\in L~\&~x\leq a\leq y\} : K(L^\delta)\ni x\leq u\leq y \in O(L^\delta)\}.
\end{align*}
Both $h^\sigma$ and $h^\pi$ extend $h$, and $h^\sigma\leq h^\pi$ pointwisely. In general, if $h$ is order-preserving, then $h^\sigma$ and $h^\pi$ are also order-preserving (\cite{GH2001}).
The canonical extension of an $\mathcal{L}_\mathrm{DLE}$-algebra $\mf{A} = (A, \mathcal{F}^A, \mathcal{G}^A)$ is the perfect  $\mathcal{L}_\mathrm{DLE}$-algebra
$\mf{A}^\delta = (A^\delta, \mathcal{F}^{A^\delta}, \mathcal{G}^{A^\delta})$ such that $f^{A^\delta}$ and $g^{A^\delta}$ are defined as the
$\sigma$-extension of $f^{A}$ and as the $\pi$-extension of $g^{A}$ respectively, for all $f\in \mathcal{F}$ and $g\in \mathcal{G}$.

\begin{lemma}\label{lemma:reduct-DLE}
For every $\mathcal{L}_\mathrm{DLE}^*$-algebra $(H,\wedge, \vee,\mathcal{F}^*, \mathcal{G}^*)$, its $(\wedge,\vee,\top,\bot, \mc{F}, \mc{G})$-reduct is a normal DLE.
\end{lemma}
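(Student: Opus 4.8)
The plan is to verify the defining conditions of a normal DLE directly. Recall that an $\mc{L}_\mrm{DLE}^*$-algebra $(H,\wedge,\vee,\mc{F}^*,\mc{G}^*)$ is, by definition, such that $(H,\wedge,\vee)$ is a bi-Heyting algebra and the connectives in $\mc{F}^*\supseteq\mc{F}$ and $\mc{G}^*\supseteq\mc{G}$ satisfy the appropriate residuation/Galois-adjointness laws with respect to their intended interpretations. What we must show is that the $(\wedge,\vee,\top,\bot,\mc{F},\mc{G})$-reduct, obtained by forgetting the added connectives $\{\leftarrow_H,\rightarrow_H,>\hspace{-0.5em}-,-\hspace{-0.5em}<\}\cup\{f^\sharp_i\}\cup\{g^\flat_j\}$, is a normal DLE in the sense of the second Definition above: $(H,\wedge,\vee,\top,\bot)$ is a bounded distributive lattice, and each $f\in\mc{F}$ (resp.\ $g\in\mc{G}$) preserves finite joins (resp.\ meets) in coordinates of order-type $1$ and reverses finite meets (resp.\ joins) in coordinates of order-type $\partial$.

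First I would observe that a bi-Heyting algebra is in particular a bounded distributive lattice, so the lattice part of the reduct is immediate; the bounds $\top,\bot$ are already present. The substantive point is the normality of the operations, and here the key tool is the elementary order-theoretic fact that a map which has a residual (upper adjoint) in a given coordinate preserves all joins existing in that coordinate, and that a map which has a Galois-adjoint in a coordinate turns joins in that coordinate into meets. Concretely, for $f\in\mc{F}$ and a coordinate $i$ with $\varepsilon_f(i)=1$: the $\mc{L}_\mrm{DLE}^*$-algebra axiom states $f_i[a_i]\leq b$ iff $a_i\leq f^\sharp_i[b]$, i.e.\ $f_i[-]$ is a left adjoint, hence preserves finite joins in coordinate $i$, which yields $f_i[a\vee b]=f_i[a]\vee f_i[b]$ and $f_i[\bot]=\bot$ (the nullary case). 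For a coordinate with $\varepsilon_f(i)=\partial$, the axiom reads $f_i[a_i]\leq b$ iff $a_i\leq^\partial f^\sharp_i[b]$, which says $f_i[-]\colon H^\partial\to H$ is a left adjoint, i.e.\ $f_i[-]$ sends meets in $H$ to joins; this gives $f_i[a\wedge b]=f_i[a]\vee f_i[b]$ and $f_i[\top]=\bot$. The argument for $g\in\mc{G}$ is order-dual, using that $g^\flat_j$ is a \emph{left} residual so that $g_j[-]$ is a \emph{right} adjoint and therefore meet-preserving (resp.\ join-reversing) in the $1$ (resp.\ $\partial$) coordinates. Note we only ever invoke the residuation laws for the original connectives $f\in\mc{F}$, $g\in\mc{G}$, which is exactly what survives in the reduct.

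I would then package these coordinatewise observations into the statement that, regarding $f^A$ as a map $H^{\varepsilon_f}\to H$ and $g^A$ as a map $H^{\varepsilon_g}\to H$, $f^A$ is finitely join-preserving and $g^A$ is finitely meet-preserving, which is precisely the condition in the paragraph following the definition of normal DLE. One mild point worth spelling out is that preservation of finite joins \emph{in each coordinate separately} already implies the required behaviour, since the equational axioms (1)--(4) listed after the normality definition are stated coordinatewise; so no joint-coordinate reasoning is needed.

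The proof is essentially routine: the only thing to be careful about is bookkeeping the order-type conventions — in particular remembering that $f^\sharp_i$ being a \emph{right residual/upper adjoint} of $f$ makes $f$ a \emph{left adjoint} (hence join-preserving), while $g^\flat_j$ being a \emph{left residual} makes $g$ a \emph{right adjoint} (hence meet-preserving), and that in a $\partial$-coordinate "adjoint" is replaced by "Galois-adjoint," flipping joins to meets. So the main (very minor) obstacle is stating the adjunction-preservation lemma in the right generality and matching it against the $\varepsilon$-dependent clauses; once that dictionary is fixed, each of the four cases is a one-line consequence. I would present the argument as a short case analysis on $\varepsilon_f(i)\in\{1,\partial\}$ and $\varepsilon_g(j)\in\{1,\partial\}$, citing the standard fact that adjoint maps preserve the relevant existing joins/meets.
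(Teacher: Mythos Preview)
Your proposal is correct and follows essentially the same approach as the paper: both arguments reduce the normality of the $\mc{F}$- and $\mc{G}$-operations to the standard order-theoretic fact that left adjoints preserve existing joins and right adjoints preserve existing meets (the paper simply cites this as \cite[Proposition 7.31]{DP02}). Your version is more explicit in handling the bi-Heyting lattice reduct and in carrying out the $\varepsilon\in\{1,\partial\}$ case analysis, but the underlying idea is identical.
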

\begin{proof}
Straightforward consequence of the fact that
left adjoints (respectively\ right adjoints) preserve existing joins (respectively\ meets). See \cite[Proposition 7.31]{DP02}.
\qed
\end{proof}

How can an $\mathcal{L}_\mathrm{DLE}$-algebra be extended to an $\mathcal{L}_\mathrm{DLE}^*$-algebra? This can be done in the canonical extension $\mf{A}^\delta = (A^\delta, \mathcal{F}^{A^\delta}, \mathcal{G}^{A^\delta})$ of $\mf{A}$. The canonical extension $A^\delta$ of the bounded distributive lattice $A$ is a perfect lattice which allows for defining adjoints. For each $f\in F^A$ and $1\leq i\leq n_f$, define
\[
f_i^\sharp[u_i] = 
\begin{cases}
\bigvee\{w\in A^\delta\mid f_i[w]\leq u_i\},~\mrm{if}~\varepsilon_f(i) = 1.\\
\bigwedge\{w\in A^\delta\mid f_i[w]\leq u_i\},~\mrm{if}~\varepsilon_f(i) = \partial.
\end{cases}
\]
For each $g\in G^A$ and $1\leq g\leq n_g$, define
\[
g_j^\flat[u_j] = 
\begin{cases}
\bigwedge\{w\in A^\delta\mid u_j\leq g_j[w]\},~\mrm{if}~\varepsilon_g(j) = 1.\\
\bigvee\{w\in A^\delta\mid u_j\leq g_j[w]\},~\mrm{if}~\varepsilon_g(j) = \partial.
\end{cases}
\]
Let ${\mc{F}^{A^\delta}}^*$ and ${\mc{G}^{A^\delta}}^*$ be extensions of $\mc{F}^{A^\delta}$ and $\mc{G}^{A^\delta}$ by adding all operators defined in the above way.

\begin{lemma}\label{lemma:ext-DLE}
The algebra ${\mf{A}^\delta}^E = (A^\delta, {\mc{F}^{A^\delta}}^*, {\mc{G}^{A^\delta}}^*)$ is a perfect $\mathcal{L}_\mathrm{DLE}^*$-algebra.
\end{lemma}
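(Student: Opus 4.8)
The plan is to verify that ${\mf{A}^\delta}^E$ satisfies all the requirements in the definition of an $\mathcal{L}_\mathrm{DLE}^*$-algebra, namely: (i) that $A^\delta$ is a bi-Heyting algebra; (ii) that it is a perfect lattice expansion, i.e.\ all operations interpreting the connectives of $\mathcal{F}^*$ and $\mathcal{G}^*$ are completely join-preserving, meet-preserving, join-reversing or meet-reversing as dictated by their order-types; and (iii) that the residuation/adjunction laws stipulated for the new connectives $f^\sharp_i$, $g^\flat_j$, $\rightarrow_H$, $\leftarrow_H$, $>\hspace{-0.5em}-$, $-\hspace{-0.5em}<$ all hold. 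Item (i) is immediate: since $A$ is a bounded distributive lattice, $A^\delta$ is a perfect distributive lattice, hence completely distributive, and every completely distributive lattice is a bi-Heyting algebra (the residuals of $\wedge$ and $\vee$ exist by the adjoint functor theorem for complete lattices, given complete distributivity in the relevant coordinate). Item (ii) for the $\mathcal{F}$- and $\mathcal{G}$-operations themselves follows from the well-known fact (cited in the excerpt) that the canonical extension of a normal DLE is perfect, so each $f^{A^\delta}$ is completely join-preserving or meet-reversing in each coordinate according to $\varepsilon_f$, and dually for $g^{A^\delta}$.

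Next I would establish the adjunction laws for the newly added operators. The key observation is that on a perfect lattice, a map that is completely join-preserving in a coordinate has a right adjoint in that coordinate (given by the defining join formula), and a map that is completely meet-preserving has a left adjoint; similarly a completely meet-reversing map in a coordinate has a ``Galois adjoint'' on the other side, and dually for join-reversing. So for each $f\in\mathcal{F}$ with $\varepsilon_f(i)=1$: since $f^{A^\delta}_i[-]$ preserves arbitrary joins, the formula defining $f^\sharp_i[u_i]$ as $\bigvee\{w : f_i[w]\leq u_i\}$ is precisely the right adjoint, so $f_i[w]\leq u_i$ iff $w\leq f^\sharp_i[u_i]$, which is exactly the required law; and being a right adjoint, $f^\sharp_i$ is completely meet-preserving, consistent with its placement in $\mathcal{G}^*$ and its order-type. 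The case $\varepsilon_f(i)=\partial$ is handled by replacing the order on the relevant coordinate by its opposite: $f^{A^\delta}_i[-]$ is completely meet-reversing, i.e.\ completely join-preserving as a map $(A^\delta)^\partial\to A^\delta$, so its adjoint is again given by the displayed $\bigwedge$-formula and satisfies $f_i[w]\leq u_i$ iff $u_i\leq^\partial f^\sharp_i[w]$, rearranged to $w\leq^\partial f^\sharp_i[u_i]$ as stipulated. The $\mathcal{G}$-operations and their $g^\flat_j$ are treated order-dually (using that $g^{A^\delta}$ is completely meet-preserving or join-reversing, so it has a left adjoint / Galois adjoint). Finally the Heyting and co-Heyting implications $\rightarrow_H,\leftarrow_H,>\hspace{-0.5em}-,-\hspace{-0.5em}<$ exist as residuals of $\wedge$ and $\vee$ by item (i), with the residuation rules of the minimal $\mathcal{L}^*_\mathrm{DLE}$-logic being exactly the adjunction laws.

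The last thing to check is that all these new operators are themselves perfect, i.e.\ that their extensions preserve or reverse the appropriate infinite joins/meets. This is a general fact about adjoint pairs on complete lattices: a left adjoint preserves all joins and a right adjoint preserves all meets, and the ``Galois'' versions reverse all joins/meets appropriately. So each $f^\sharp_i$, $g^\flat_j$, and each Heyting/co-Heyting implication is automatically completely join-preserving or meet-preserving (possibly in an order-reversing coordinate), matching the order-type assigned to it. Assembling these observations shows ${\mf{A}^\delta}^E$ is a perfect $\mathcal{L}_\mathrm{DLE}^*$-algebra.

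I do not expect any genuine obstacle here; the result is essentially bookkeeping over the connectives of $\mathcal{F}^*$ and $\mathcal{G}^*$. The one point that requires a little care — and is the closest thing to a ``hard part'' — is the handling of the coordinates with order-type $\partial$: one must consistently interpret ``Galois-adjoint'' as the adjoint taken with respect to the opposite order in that coordinate, and check that the displayed $\bigvee$/$\bigwedge$ formulas indeed compute that adjoint. Once the convention is fixed, each case is a one-line application of the adjoint functor theorem for complete lattices together with complete distributivity of $A^\delta$ and the perfectness of the canonical extension of a normal DLE.
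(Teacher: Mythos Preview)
Your proposal is correct and follows essentially the same approach as the paper: the core step is to verify the residuation laws for the newly defined adjoints, which both you and the paper derive from the complete join-preservation (respectively meet-preservation, etc.) of the extended $f$- and $g$-operations on the perfect lattice $A^\delta$. The paper's proof is much terser---it only spells out the case $f\in\mathcal{F}$ with $\varepsilon_f(i)=1$ by a direct computation (monotonicity plus complete join-preservation of $f_i$), and leaves the bi-Heyting structure and the perfectness of the new operators implicit---whereas you explicitly organize the argument into (i) bi-Heyting, (ii) perfectness of the original operations, and (iii) residuation, and then note that perfectness of the new operators follows from their being adjoints; this extra bookkeeping is accurate and arguably clearer, but not a different idea.
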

\begin{proof}
It suffices to show the residuation laws. We prove only the case for $f\in \mc{F}$ and $\varepsilon_f(i) = 1$. The remaining cases are similar. By definition, our goal is to show
\begin{center}
$f_i[u_i]\leq w$ iff $u_i\leq \bigvee\{v\in A^\delta\mid f_i[v]\leq w\}$.
\end{center}
The `only if' part is obvious. For the `if' part, assume $u_i\leq \bigvee\{v\in A^\delta\mid f_i[v]\leq w\}$. Then $f_i[u_i]\leq f_i[\bigvee\{v\in A^\delta\mid f_i[v]\leq w\}]$. By distributivity, one gets $f_i[u_i]\leq \bigvee\{f_i[v]\mid f_i[v]\leq w\}\leq w$.
\qed
\end{proof}

\begin{theorem}
\label{th:conservative extension}
The logic $\mathbf{L}_\mathbb{DLE}^*$ is a conservative extension of $\mathbf{L}_\mathbb{DLE}$, i.e., for every $\mathcal{L}_\mathrm{DLE}$-sequent $\phi\vdash\psi$, $\phi\vdash\psi$ is derivable in $\mathbf{L}_\mathbb{DLE}$ if and only if $\phi\vdash\psi$ is derivable in $\mathbf{L}_\mathbb{DLE}^*$.
\end{theorem}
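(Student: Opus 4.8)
The plan is to establish the nontrivial right-to-left implication by contraposition, combining the algebraic completeness of the two systems with the canonical-extension expansion provided by Lemma~\ref{lemma:ext-DLE}. The left-to-right direction is immediate, since $\mathbf{L}_\mathbb{DLE}^*$ results from $\mathbf{L}_\mathbb{DLE}$ by \emph{adding} axioms and closing under additional rules, so any $\mathbf{L}_\mathbb{DLE}$-derivation of $\phi\vdash\psi$ is already an $\mathbf{L}_\mathbb{DLE}^*$-derivation.

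For the converse, suppose $\phi\vdash\psi$ is not derivable in $\mathbf{L}_\mathbb{DLE}$. By the soundness and completeness of $\mathbf{L}_\mathbb{DLE}$ with respect to the class $\mathbb{DLE}$ of normal DLEs, there are a normal DLE $\mf{A} = (A,\mc{F}^A,\mc{G}^A)$ and a homomorphism $\mu$ from the $\mc{L}_\mrm{DLE}$-algebra of formulas into $\mf{A}$ with $\mu(\phi)\not\leq\mu(\psi)$. The key step is to move to the canonical extension $\mf{A}^\delta = (A^\delta,\mc{F}^{A^\delta},\mc{G}^{A^\delta})$, which is a perfect---hence normal---$\mc{L}_\mrm{DLE}$-algebra, and to observe that the canonical embedding $e\colon A\hookrightarrow A^\delta$ is an embedding of $\mc{L}_\mrm{DLE}$-algebras. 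Indeed, $A$ is a bounded sublattice of $A^\delta$ by construction, and since each $f^{A^\delta}$ and $g^{A^\delta}$ is by definition the $\sigma$-extension of $f^A$ and the $\pi$-extension of $g^A$, and both $\sigma$- and $\pi$-extensions agree with the original map on $A$, the embedding $e$ commutes with all operations in $\mc{F}$ and $\mc{G}$. Hence $e\circ\mu$ is a valuation into $\mf{A}^\delta$, and since $e$ is an order-embedding, $e\mu(\phi)\not\leq e\mu(\psi)$; that is, $\mf{A}^\delta\not\models\phi\vdash\psi$.

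Next I would expand $\mf{A}^\delta$ to the algebra ${\mf{A}^\delta}^E = (A^\delta,{\mc{F}^{A^\delta}}^*,{\mc{G}^{A^\delta}}^*)$ of Lemma~\ref{lemma:ext-DLE}, which is a perfect $\mc{L}_\mrm{DLE}^*$-algebra. By construction its $(\wedge,\vee,\top,\bot,\mc{F},\mc{G})$-reduct is exactly $\mf{A}^\delta$ (compare Lemma~\ref{lemma:reduct-DLE}), so, as $\phi$ and $\psi$ are $\mc{L}_\mrm{DLE}$-formulas and the interpretation of $\mc{L}_\mrm{DLE}$-terms depends only on this reduct, ${\mf{A}^\delta}^E\not\models\phi\vdash\psi$. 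Finally, by the soundness of $\mathbf{L}_\mathbb{DLE}^*$ with respect to the class of all $\mc{L}_\mrm{DLE}^*$-algebras (established by the Lindenbaum--Tarski construction recalled above), $\phi\vdash\psi$ is not derivable in $\mathbf{L}_\mathbb{DLE}^*$, which completes the contraposition.

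The main obstacle, and genuinely the crux of the proof, is the verification that forming the canonical extension preserves the $\mc{F}$- and $\mc{G}$-operations already defined on $A$, i.e.\ that $e$ is an algebra embedding and not merely an embedding of the underlying lattices. This rests precisely on the fact, recalled from \cite{GH2001}, that $h^\sigma$ and $h^\pi$ both extend $h$; one must also check that normality of $\mf{A}$ (preservation/reversal of finite joins and meets in the appropriate coordinates) is inherited by $\mf{A}^\delta$ in the completely-additive/multiplicative form, so that $\mf{A}^\delta$ is a normal DLE and Lemma~\ref{lemma:ext-DLE} applies to it. The remaining ingredients---algebraic completeness of $\mathbf{L}_\mathbb{DLE}$, soundness of $\mathbf{L}_\mathbb{DLE}^*$, and the order-embedding property of $e$---are routine.
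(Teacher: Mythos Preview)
Your proof is correct and follows essentially the same route as the paper: contraposition for the nontrivial direction, refutation in some DLE $\mf{A}$ via completeness, passage to the canonical extension $\mf{A}^\delta$ (with $A$ a subalgebra so the refutation persists), expansion to the $\mc{L}_\mrm{DLE}^*$-algebra ${\mf{A}^\delta}^E$ via Lemma~\ref{lemma:ext-DLE}, and then soundness of $\mathbf{L}_\mathbb{DLE}^*$. The only minor difference is that for the easy direction you argue syntactically (every $\mathbf{L}_\mathbb{DLE}$-derivation is literally an $\mathbf{L}_\mathbb{DLE}^*$-derivation), whereas the paper goes semantically via Lemma~\ref{lemma:reduct-DLE}; both are fine.
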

\begin{proof}
Assume that $\phi\vdash\psi$ is derivable in $\mathbf{L}_\mathbb{DLE}$. By the completeness of $\mathbf{L}_\mathbb{DLE}$, $\phi\vdash\psi$ is valid in all DLEs.
By Lemma \ref{lemma:reduct-DLE}, 
$\phi\vdash\psi$ is also valid in all $\mc{L}_\mrm{DLE}^*$-algebras. Hence by the completeness of $\mathbf{L}_\mathbb{DLE}^*$, $\phi\vdash\psi$ is derivable in it. Conversely, assume that the $\mathcal{L}_\mathrm{DLE}$-sequent $\phi\vdash\psi$ is not derivable in $\mathbf{L}_\mathbb{DLE}$. Then by the completeness of $\mathbf{L}_\mathbb{DLE}$ with respect to.\ the class of DLEs, there exists a DLE $A$ and a variable assignment under which $\phi^A\not\leq \psi^A$, where $\phi^A$ and $\psi^A$ are values of $\phi$ and $\psi$ in $A$ under that assignment respectively. Consider the canonical extension $A^\delta$ of $A$.
Since $A$ is a subalgebra of $A^\delta$, the sequent $\phi\vdash\psi$ is not satisfied in $A^\delta$ under the variable assignment $\iota \circ v$ ($\iota$ denoting the canonical embedding $A\hookrightarrow A^\delta$). By Lemma \ref{lemma:ext-DLE}, one gets an $\mathcal{L}_\mathrm{DLE}^*$-algebra ${\mf{A}^\delta}^E$ which refutes $\phi\vdash\psi$. By the completeness of $\mathbf{L}_\mathbb{DLE}^*$, $\phi\vdash\psi$ is not derivable in $\mathbf{L}_\mathbb{DLE}^*$.
\qed
\end{proof}

The minimal logics $\mbf{L}_\mathbb{DLE}^*$ is
in the full language $\mc{L}^*_\mrm{DLE}$ with all adjoints. If the language $\mc{L}_\mrm{DLE}$ is expanded partially, i.e., with a portion of adjoint pairs, one can also obtain more general semantic conservativity results the proofs of which are the same as the proof of Theorem \ref{th:conservative extension}. Consider the language $\mc{L}_\mrm{DLE}(\mc{F}, \mc{G})$. Let $\mc{X}\sub \mc{F}$ and $\mc{Y}\sub\mc{G}$. Define $\mc{X}^\sharp$ as the extension of $\mc{X}$ with right adjoints, and $\mc{Y}^\flat$ as the extension of $\mc{Y}$ with left adjoints.

\begin{theorem}
\label{th:conservative extension 2}
Let $\mc{L}_\mrm{DLE}(\mc{F}, \mc{G})$ be a DLE-language, $\mc{X}\sub \mc{F}$ and $\mc{Y}\sub\mc{G}$.
The minimal logic $\mathbf{L}_\mathbb{DLE}^*(\mc{F}^*, \mc{G}^*)$ is a conservative extension of the minimal logic $\mathbf{L}_\mathbb{DLE}(\mc{F}, \mc{G}, \mc{X}^\sharp, \mc{Y}^\flat)$ which is also a conservative extension of the logic $\mbf{L}_\mathbb{DLE}(\mc{F}, \mc{G})$.
\end{theorem}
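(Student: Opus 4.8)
The plan is to observe that Theorem~\ref{th:conservative extension 2} is essentially the same statement as Theorem~\ref{th:conservative extension} but with the full expansion replaced by two partial expansions, and to adapt the proof of Theorem~\ref{th:conservative extension} accordingly, establishing the two conservativity claims separately. For the first claim, that $\mathbf{L}_\mathbb{DLE}^*(\mc{F}^*, \mc{G}^*)$ conservatively extends $\mathbf{L}_\mathbb{DLE}(\mc{F}, \mc{G}, \mc{X}^\sharp, \mc{Y}^\flat)$, the key point is that both of these logics are themselves instances of the pattern ``minimal $\mc{L}_\mathrm{DLE}$-logic'' for suitable parameters: the intermediate logic has connective sets $(\mc{F}\cup\mc{X}^\sharp\text{-part},\, \mc{G}\cup\mc{X}^\sharp\text{-part}\cup\mc{Y}^\flat\text{-part}\cup\ldots)$ obtained by adjoining only \emph{some} of the residuals, and its algebras are the distributive lattice expansions in which exactly those chosen residuation laws hold. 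So I would first make precise that $\mathbf{L}_\mathbb{DLE}(\mc{F}, \mc{G}, \mc{X}^\sharp, \mc{Y}^\flat)$ is the minimal DLE-logic of Definition~\ref{def:DLE:logic:general} for the language whose $\mc{F}$-part and $\mc{G}$-part are obtained by sorting $\mc{X}^\sharp\cup\mc{Y}^\flat$ according to their order-types (exactly as in the stipulations preceding the definition of $\mathbf{L}_\mathbb{DLE}^*$), together with the corresponding residuation rules for the connectives in $\mc{X}$ and $\mc{Y}$ only.

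Next I would run the canonical-extension argument of Theorem~\ref{th:conservative extension} in this setting. The forward (easy) direction uses an analogue of Lemma~\ref{lemma:reduct-DLE}: any $\mc{L}_\mathrm{DLE}^*(\mc{F}^*,\mc{G}^*)$-algebra, when we forget the residuals not coming from $\mc{X}$ and $\mc{Y}$, is an algebra for the intermediate logic, since adjoints still preserve the relevant joins/meets; hence derivability in the smaller logic gives validity in all algebras of the larger, and completeness closes the loop. For the converse direction, given a refutation of an $\mc{L}_\mathrm{DLE}(\mc{F},\mc{G},\mc{X}^\sharp,\mc{Y}^\flat)$-sequent in some algebra $\mf{A}$ of the intermediate variety, I would pass to the canonical extension $\mf{A}^\delta$ (taking $\sigma$-extensions of $\mc{F}$-operations and $\pi$-extensions of $\mc{G}$-operations as before), note that $\mf{A}\hookrightarrow\mf{A}^\delta$ so the refutation persists, and then use the construction preceding Lemma~\ref{lemma:ext-DLE} to freshly define the \emph{missing} residuals $f_i^\sharp$ for $f\in\mc{F}\setminus\mc{X}$ and $g_j^\flat$ for $g\in\mc{G}\setminus\mc{Y}$ on the perfect lattice $A^\delta$; the residuals already present (those for $\mc{X}$, $\mc{Y}$) are left untouched. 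The analogue of Lemma~\ref{lemma:ext-DLE} then shows the resulting algebra is a perfect $\mc{L}_\mathrm{DLE}^*$-algebra refuting the sequent, and completeness of $\mathbf{L}_\mathbb{DLE}^*$ finishes the first claim. The second claim, that $\mathbf{L}_\mathbb{DLE}(\mc{F},\mc{G},\mc{X}^\sharp,\mc{Y}^\flat)$ conservatively extends $\mathbf{L}_\mathbb{DLE}(\mc{F},\mc{G})$, is proved identically, now adding only the residuals for $\mc{X}$ and $\mc{Y}$ in the canonical extension.

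The main obstacle I anticipate is bookkeeping rather than mathematics: one must be careful that when only a subcollection of residual connectives is added, the \emph{algebras} of the intermediate logic are required to satisfy exactly the corresponding residuation identities and no others, so that forgetting the extra residuals of an $\mc{L}_\mathrm{DLE}^*$-algebra really lands inside the intermediate variety, and conversely so that the canonical-extension construction is free to define the remaining residuals without conflicting with structure already present. In particular one should check that the $\sigma$/$\pi$-extensions of the basic $\mc{F}$- and $\mc{G}$-operations still admit the freshly-defined adjoints on $A^\delta$ (this is precisely the content of Lemma~\ref{lemma:ext-DLE}, which only used perfectness of $A^\delta$ together with complete distributivity, and so goes through verbatim), and that the residuals for $\mc{X}$, $\mc{Y}$ transported from $\mf{A}$ to $\mf{A}^\delta$ via the canonical extension of the expanded algebra still satisfy their residuation laws — but this too follows from the general canonicity of residuation, so no genuinely new difficulty arises. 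Hence the proof is ``the same as the proof of Theorem~\ref{th:conservative extension}'' as the statement already indicates, and I would simply remark on the two-step factorization and the selective definition of residuals in the canonical extension.
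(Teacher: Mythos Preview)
Your proposal is correct and follows exactly the approach the paper indicates: the paper states explicitly that ``the proofs of which are the same as the proof of Theorem~\ref{th:conservative extension}'' and gives no further details, so your elaboration via canonical extensions, the reduct lemma, and selectively defining the missing residuals on $\mf{A}^\delta$ is precisely the intended argument. The only point worth noting is that your careful remark about the pre-existing residuals for $\mc{X},\mc{Y}$ surviving the passage to $\mf{A}^\delta$ (via canonicity of residuation) is a genuine detail the paper glosses over, so you have in fact spelled out more than the paper does.
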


Let us consider the specialization of Theorem
\ref{th:conservative extension 2} to the strict implication logic $\mbf{S}_\mathbb{BDI}$ and the Lambek calculus $\msf{BDFNL}$.
First, as a corollary of Lemma \ref{lemma:reduct-DLE}, the $(\wedge, \vee, \bot, \top, \imp)$-reduct of a BDRG is a BDI.
Second, the canonical extension of 
a BDI $(A, \imp)$ is the $\pi$-extension $(A^\delta, \imp^\pi)$ which is also a BDI (cf.~\cite{GH2001,Mai14}), and 
we can define binary operators $\bu$ and $\limp$ on $A^\delta$ by setting
$u\bu v = \bigwedge\{w\in A^\delta\mid v\leq u\imp^\pi w\}$ and $u\limp v = \bigvee\{w\in A^\delta
\mid w\bu v\leq u\}$.
As a corollary of Lemma \ref{lemma:ext-DLE}, one gets the residuation law: for all $u, v, w\in A^\delta$,  $u\bu v\leq w$ iff $v\leq u\imp^\pi w$. Then one can apply Theorem \ref{th:conservative extension 2} immediately to get the following corollary:

\begin{corollary}\label{cor:bdfnl}
$\msf{BDFNL}$ is a conservative extension of $\mbf{S}_\mathbb{BDI}$.
\end{corollary}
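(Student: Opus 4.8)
The plan is to apply Theorem \ref{th:conservative extension 2} to the specific DLE-language that underlies strict implication, obtaining the conservativity of $\msf{BDFNL}$ over $\mbf{S}_\mathbb{BDI}$ essentially by identifying the right instances of $\mc{F}, \mc{G}, \mc{X}, \mc{Y}$ and then matching the resulting minimal logics with $\msf{BDFNL}$ and $\mbf{S}_\mathbb{BDI}$ respectively. First I would recall that $\mc{L}_\mrm{SI} = \mc{L}_\mrm{DLE}(\mc{F}, \mc{G})$ with $\mc{F} = \emptyset$ and $\mc{G} = \{\imp\}$, where $\imp$ has order-type $(\partial, 1)$. By the soundness and completeness theorem for DLE-logics already established, $\mbf{S}_\mathbb{BDI}$ is the minimal DLE-logic $\mbf{L}_\mathbb{DLE}(\mc{F}, \mc{G})$ for this signature, since the axioms (C1)--(C3) are exactly the normality equations for a $\mc{G}$-operator of order-type $(\partial,1)$ and the rules $(\mrm{M_3}),(\mrm{M_4})$ are the monotonicity rules; correspondingly $\mathbb{BDI}$ is the class of normal DLE-algebras for this signature.

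Next I would take $\mc{X} = \emptyset \sub \mc{F}$ and $\mc{Y} = \{\imp\} \sub \mc{G}$, so that $\mc{Y}^\flat$ adjoins to $\imp$ its left residuals in each coordinate. Because $\imp$ has order-type $(\partial, 1)$, the residual in the monotone (second) coordinate is the operation I would call $\bu$ with $a \bu b \leq c$ iff $b \leq a \imp c$, and the Galois-adjoint in the antitone (first) coordinate gives the operation $\limp$ with $a \leq c \limp b$ iff $a \bu b \leq c$ — precisely the connectives of $\mc{L}_\mrm{LC}$. Thus $\mbf{L}_\mathbb{DLE}(\mc{F}, \mc{G}, \mc{X}^\sharp, \mc{Y}^\flat)$ in this instance is exactly the minimal logic in the language $\mc{L}_\mrm{LC}$ with the residuation rules $(\mrm{Res1})$--$(\mrm{Res4})$ and the lattice axioms/rules, i.e.\ $\msf{BDFNL}$; and its algebraic models, by the discussion preceding the corollary (canonical extension $(A^\delta, \imp^\pi)$ together with the definable $\bu$ and $\limp$ satisfying (RES)), are exactly the BDRGs. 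I would also note that the further residuals supplied by the full $\mc{F}^*, \mc{G}^*$ (the Heyting implications $\imp_H, \limp_H$, the co-Heyting $>\hspace{-0.5em}-$, $-\hspace{-0.5em}<$, and the residual of $\bu$ itself) land in the strictly larger language $\mc{L}_\mrm{SI}^*$, which is not needed here — we only need the second, weaker conservativity claim in Theorem \ref{th:conservative extension 2}, namely that $\mbf{L}_\mathbb{DLE}(\mc{F}, \mc{G}, \mc{X}^\sharp, \mc{Y}^\flat)$ conservatively extends $\mbf{L}_\mathbb{DLE}(\mc{F}, \mc{G})$.

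With these identifications, the corollary follows by invoking Theorem \ref{th:conservative extension 2}: $\msf{BDFNL} = \mbf{L}_\mathbb{DLE}(\emptyset, \{\imp\}, \emptyset^\sharp, \{\imp\}^\flat)$ is a conservative extension of $\mbf{L}_\mathbb{DLE}(\emptyset, \{\imp\}) = \mbf{S}_\mathbb{BDI}$, i.e.\ every $\mc{L}_\mrm{SI}$-sequent derivable in $\msf{BDFNL}$ is already derivable in $\mbf{S}_\mathbb{BDI}$. Unpacking the proof of Theorem \ref{th:conservative extension 2} concretely in this case: if $\phi \vdash \psi$ is not derivable in $\mbf{S}_\mathbb{BDI}$, pick a BDI $\mf{A}$ and an assignment refuting it, pass to $\mf{A}^\delta$ (a perfect BDI, by the cited results on the $\pi$-extension), which still refutes $\phi \vdash \psi$ since $\mf{A} \hra \mf{A}^\delta$, then expand $\mf{A}^\delta$ with the definable operations $\bu$ and $\limp$ given above to get a BDRG refuting $\phi \vdash \psi$; completeness of $\msf{BDFNL}$ for BDRGs then yields non-derivability in $\msf{BDFNL}$.

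The main obstacle — or rather the one point requiring genuine care rather than bookkeeping — is checking that the expanded algebra on $A^\delta$ is genuinely a BDRG, i.e.\ that the operations $\bu$ and $\limp$ defined via meets/joins over $A^\delta$ really satisfy the full two-sided residuation law (RES). One direction of each residuation biconditional is immediate from the definitions, but the converse directions rely on the perfectness of $A^\delta$ together with the fact that $\imp^\pi$ is completely meet-preserving in its monotone coordinate and completely join-reversing in its antitone coordinate (and hence $\bu$ completely join-preserving in each coordinate), so that the suprema/infima used in the definitions behave well under the operations — this is exactly the content of (the relevant cases of) Lemma \ref{lemma:ext-DLE}, specialized as indicated in the paragraph before the corollary, so the work is already done; the remaining task is simply to record that $\mbf{S}_\mathbb{BDI}$ and $\msf{BDFNL}$ are the minimal DLE-logics for the two signatures in question and cite Theorem \ref{th:conservative extension 2}.
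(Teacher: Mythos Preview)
Your proposal is correct and follows essentially the same approach as the paper: the discussion immediately preceding the corollary carries out exactly the specialization you describe (identifying $\mc{F}=\emptyset$, $\mc{G}=\{\imp\}$, noting that the BDRG reduct is a BDI via Lemma~\ref{lemma:reduct-DLE}, defining $\bu$ and $\limp$ on the canonical extension $(A^\delta,\imp^\pi)$ and invoking Lemma~\ref{lemma:ext-DLE} for the residuation law), and then applies Theorem~\ref{th:conservative extension 2}. Your write-up is more explicit about the bookkeeping of order-types and the identification $\msf{BDFNL}=\mbf{L}_\mathbb{DLE}(\emptyset,\{\imp\},\emptyset^\sharp,\{\imp\}^\flat)$, but the argument is the same.
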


\subsection{The algorithm $\msf{ALBA}$ for $\mathcal{L}_\mathrm{DLE}$-inequalities}
In this subsection, we will recall from \cite{GP2015} the definition of {\em inductive} $\mathcal{L}_\mathrm{DLE}$-inequalities on which the algorithm $\msf{ALBA}$ is guaranteed to succeed, and we will further specialize it to inequalities in the language of strict implication logic.
		
\begin{definition}[\textbf{Signed Generation Tree}]
\label{def: signed gen tree}
The \emph{positive} (respectively\ \emph{negative}) {\em generation tree} of any $\mathcal{L}_\mathrm{DLE}$-term $s$ is defined by labelling the root node of the generation tree of $s$ with the sign $+$ (respectively\ $-$), and then propagating the labelling on each remaining node as follows:
\begin{enumerate}
\item For any node labelled with $ \lor$ or $\land$, assign the same sign to its children nodes.
\item For any node labelled with $h\in \mathcal{F}\cup \mathcal{G}$ of arity $n_h\geq 1$, and for any $1\leq i\leq n_h$, assign the same (respectively\ the opposite) sign to its $i$th child node if $\varepsilon_h(i) = 1$ (respectively\ if $\varepsilon_h(i) = \partial$).
\end{enumerate}
Nodes in signed generation trees are \emph{positive} (respectively\ \emph{negative}) if they are signed $+$ (respectively\ $-$). The signed generation tree of an inequality $s\leq t$ consists of the generation trees of $+s$ and $-t$. 
\end{definition}
		
For any term (formula) $s(p_1,\ldots p_n)$, any order type $\varepsilon$ over $n$, and any $1 \leq i \leq n$, an \emph{$\varepsilon$-critical node} in a signed generation tree of $s$ is a leaf node $+p_i$ with $\varepsilon_i = 1$ or $-p_i$ with $\varepsilon_i = \partial$. An $\varepsilon$-{\em critical branch} in the tree is a branch from an $\varepsilon$-critical node. The intuition, which will be built upon later, is that variable occurrences corresponding to $\varepsilon$-critical nodes are \emph{to be solved for}, according to $\varepsilon$.
		
		For every term $s(p_1,\ldots p_n)$ and every order type $\varepsilon$, we say that $+s$ (respectively\ $-s$) {\em agrees with} $\varepsilon$, and write $\varepsilon(+s)$ (respectively\ $\varepsilon(-s)$), if every leaf in the signed generation tree of $+s$ (respectively\ $-s$) is $\varepsilon$-critical.
		In other words, $\varepsilon(+s)$ (respectively\ $\varepsilon(-s)$) means that all variable occurrences corresponding to leaves of $+s$ (respectively\ $-s$) are to be solved for according to $\varepsilon$. We will also write $+s'\prec \ast s$ (respectively\ $-s'\prec \ast s$) to indicate that the subterm $s'$ inherits the positive (respectively\ negative) sign from the signed generation tree $\ast s$. Finally, we will write $\varepsilon(\gamma) \prec \ast s$ (respectively\ $\varepsilon^\partial(\gamma_h) \prec \ast s$) to indicate that the signed subtree $\gamma$, with the sign inherited from $\ast s$, agrees with $\varepsilon$ (respectively\ with $\varepsilon^\partial$).
\begin{definition}
\label{def:good:branch}
Nodes in signed generation trees will be called \emph{$\Delta$-adjoints}, \emph{syntactically left residual (SLR)}, \emph{syntactically right residual (SRR)}, and \emph{syntactically right adjoint (SRA)}, according to the specification given in Table \ref{table:pia}.
A branch in a signed generation tree $\ast s$, with $\ast \in \{+, - \}$, is called a \emph{good branch} if it is the concatenation of two paths $P_1$ and $P_2$, one of which may possibly be of length $0$, such that $P_1$ is a path from the leaf consisting (apart from variable nodes) only of PIA-nodes, and $P_2$ consists (apart from variable nodes) only of Skeleton-nodes.\footnote{~These classes are grouped together into the super-classes \emph{Skeleton} and \emph{PIA} as indicated in the table. This organization is motivated and discussed in \cite{CFPS} and \cite{CoGhPa13} to establish a connection with analogous terminology in \cite{BeHovB12}.}
\begin{table}[\here]
\caption{Skeleton and PIA nodes for $\mathrm{DLE}$.}
\begin{center}
\begin{tabular}{| c | c |}
\hline
Skeleton &PIA\\
\hline
$\Delta$-adjoints & SRA \\
\begin{tabular}{ c c c c c c}
$+$ &$\vee$ &$\wedge$ &$\phantom{\lhd}$ & &\\
$-$ &$\wedge$ &$\vee$\\
\end{tabular}
&
\begin{tabular}{c c c c }
$+$ &$\wedge$ &$g$ & with $n_g = 1$ \\
$-$ &$\vee$ &$f$ & with $n_f = 1$ \\
\end{tabular}
\\
\hline
SLR &SRR\\
\begin{tabular}{c c c c }
$+$ & $\wedge$ &$f$ & with $n_f \geq 1$\\
$-$ & $\vee$ &$g$ & with $n_g \geq 1$ \\
\end{tabular}
&\begin{tabular}{c c c c}
$+$ &$\vee$ &$g$ & with $n_g \geq 2$\\
$-$ & $\wedge$ &$f$ & with $n_f \geq 2$\\
\end{tabular}
\\
\hline
\end{tabular}
\end{center}
\label{table:pia}
\vspace{-2em}
\end{table}
\end{definition}
		
\begin{definition}[Inductive inequalities]
\label{Inducive:Ineq:Def}
For any order type $\varepsilon$ and irreflexive and transitive relation $\Omega$ on $p_1,\ldots p_n$, the signed generation tree $*s$ $(* \in \{-, + \})$ of a term $s(p_1,\ldots p_n)$ is \emph{$(\Omega, \varepsilon)$-inductive} if
\begin{enumerate}
\item for all $1 \leq i \leq n$, every $\varepsilon$-critical branch with leaf $p_i$ is good (cf.\ Definition \ref{def:good:branch});
\item every $m$-ary SRR-node in the critical branch is of the form $ \circledast(\gamma_1,\dots,\gamma_{j-1},\beta,\gamma_{j+1}\ldots,\gamma_m)$, where for any $h\in\{1,\ldots,m\}\setminus j$: 
\begin{enumerate}
\item $\varepsilon^\partial(\gamma_h) \prec \ast s$ (cf.\ discussion before Definition \ref{def:good:branch}), and
\item $p_k <_{\Omega} p_i$ for every $p_k$ occurring in $\gamma_h$ and for every $1\leq k\leq n$.
\end{enumerate}
\end{enumerate}
We will refer to $<_{\Omega}$ as the \emph{dependency order} on the variables. An inequality $s \leq t$ is \emph{$(\Omega, \varepsilon)$-inductive} if the signed generation trees $+s$ and $-t$ are $(\Omega, \varepsilon)$-inductive. An inequality $s \leq t$ is \emph{inductive} if it is $(\Omega, \varepsilon)$-inductive for some $\Omega$ and $\varepsilon$.
\end{definition}

The definition of inductive inequalities for $\mc{L}_\mrm{DLE}$ can be easily specialized to the language $\mc{L}_\mrm{SI}$ of strict implication logic. The specialization needs only the classification of nodes in Table \ref{table:strict-nodes}.
\begin{table}[\here]
\label{table:strict-nodes}
\caption{Skeleton and PIA nodes for $\mc{L}_\mrm{SI}$.}
\begin{center}
\begin{tabular}{| c | c |}
\hline
Skeleton &PIA\\
\hline
$\Delta$-adjoints & SRA \\
\begin{tabular}{llllll}
$+$ &$\vee$ &$\wedge$ &$\phantom{\lhd}$ & &\\
$-$ &$\wedge$ & $\vee$\\
\end{tabular}
&
\begin{tabular}{llll}
$+$ &$\wedge$ & & \\
$-$ & $\vee$ &  & \\
\end{tabular}
\\
\hline
SLR &SRR\\
\begin{tabular}{llll}
$+$ & $\wedge$  & & \\
$-$ & $\vee$ & $\imp$ &  \\
\end{tabular}
&\begin{tabular}{llll}
$+$ & $\vee$ & $\imp$ &  \\
$-$ & $\wedge$ & &\\
\end{tabular}
\\
\hline
\end{tabular}
\end{center}
\vspace{-2em}
\end{table}

\begin{example}\label{exam:inductive}
Every sequent $\phi\vdash\psi$ can be presented as an inequality when $\vdash$ is replaced with $\leq$ due to the algebraic interpretation of $\vdash$. The inequalities obtained from Table \ref{table:cs} are inductive. For instance,
(Fr) is  inductive for $\varepsilon_p = \varepsilon_p= \varepsilon_r = 1$ and $p <_\Omega q <_\Omega r$. Henceforth we do not distinguish ``sequent" and ``inequality" if no confusion will arise.
\end{example}

Now we will define the algorithm $\msf{ALBA}$ in the setting of $\mathcal{L}_\mathrm{DLE}$. Consider the expanded language $\mathcal{L}_\mathrm{DLE}^{*+}$, which is built up on the base of the lattice constants $\top, \bot$ and a set of propositional variables $\mathsf{NOM}\cup \mathsf{CONOM}\cup \mathsf{AtProp}$ (the variables $\mathbf{i}, \mathbf{j}$ in $\mathsf{NOM}$ are referred to as {\em nominals}, and the variables $\mathbf{m}, \mathbf{n}$ in $\mathsf{CONOM}$ as {\em conomimals}), closing under the logical connectives of $\mathcal{L}_\mathrm{DLE}^*$. The natural semantic environment of $\mathcal{L}_\mathrm{DLE}^{*+}$ is given by perfect $\mathcal{L}_\mathrm{DLE}$-algebras. Let $A$ be a perfect 
$\mathcal{L}_\mathrm{DLE}$-algebra. An element $a\in A$ is {\em completely join-irreducible} (respectively completely meet-irreducible) if $a=\bigvee S$ (respectively $a=\bigwedge S$) implies that $a\in S$, for every subset $S$ of $A$. Nominals and conominals respectively range over the sets of the completely join-irreducible elements and the completely meet-irreducible elements of perfect DLEs.

An $\mathcal{L}_\mathrm{DLE}^{*+}$-inequality is an expression of the form $\phi\leq\psi$ where $\phi$ and $\psi$ are $\mathcal{L}_\mathrm{DLE}^{*+}$-formulas. An $\mathcal{L}_\mathrm{DLE}^{*+}$-quasi-inequality is an expression of the form $\phi_1\leq\psi_1~\&~\ldots~\&~\phi_n\leq\psi_n\Imp \phi_0\leq\psi_0$.
where all $\phi_i\leq\psi_i$ for $i\leq n$ are $\mathcal{L}_\mathrm{DLE}^{*+}$-inequalities. The algorithm $\msf{ALBA}$ manipulates inequalities and quasi-inequalities in $\mathcal{L}_\mathrm{DLE}^{*+}$.

The version of $\msf{ALBA}$ relative to $\mathcal{L}_\mathrm{DLE}$ runs as detailed in \cite{CoPa12,GP2015}. $\mathcal{L}_\mathrm{DLE}$-inequalities are equivalently transformed into the conjunction of one or more $\mathcal{L}_\mathrm{DLE}^{*+}$ quasi-inequalities, with the aim of eliminating propositional variable occurrences via the application of Ackermann rules. 
The proof of the soundness and invertibility of the general rules for the DLE-setting is similar to the one provided in \cite{CoPa12,CoGhPa13}. Here we recall the algorithm from \cite{GP2015} briefly.
The algorithm $\msf{ALBA}$ manipulates input inequalities $\phi\leq\psi$ and proceeds in three stages:
		
		\textbf{First stage: preprocessing and first approximation.} 
		$\msf{ALBA}$ preprocesses the input inequality $\phi\leq \psi$ by performing the following steps
exhaustively in the signed generation trees $+\phi$ and $-\psi$:
\begin{enumerate}
\item
\begin{enumerate}
\item Push down, towards variables, occurrences of $+\land$, by distributing each of them over their children nodes labelled with $+\lor$ which are not in the scope of PIA nodes;
\item Push down, towards variables, occurrences of $-\lor$, by distributing each of them over their children nodes labelled with $-\land$ which are not in the scope of PIA nodes;
\item Push down, towards variables, occurrences of $+f$ for any $f\in \mathcal{F}$, by distributing each such occurrence over its $i$th child node whenever the child node is labelled with $+\lor$ (respectively\ $-\land$) and is not in the scope of PIA nodes, and whenever $\varepsilon_f(i)=1$ (respectively\ $\varepsilon_f(i)=\partial$);				
\item Push down, towards variables, occurrences of $-g$ for any $g\in \mathcal{G}$, by distributing each such occurrence over its $i$th child node whenever the child node is labelled with $-\land$ (respectively\ $+\lor$) and is not in the scope of PIA nodes, and whenever $\varepsilon_g(i)=1$ (respectively\ $\varepsilon_g(i)=\partial$).
\end{enumerate}
\item Apply the splitting rules:
\[
\frac{\alpha\leq\beta\wedge\gamma}{\alpha\leq\beta\quad \alpha\leq\gamma}
\qquad
\frac{\alpha\vee\beta\leq\gamma}{\alpha\leq\gamma\quad \beta\leq\gamma}
\]
\item Apply the monotone and antitone variable-elimination rules:
\[
\frac{\alpha(p)\leq\beta(p)}{\alpha(\perp)\leq\beta(\perp)}
\qquad
\frac{\beta(p)\leq\alpha(p)}{\beta(\top)\leq\alpha(\top)}
\]
for $\beta(p)$ positive in $p$ and $\alpha(p)$ negative in $p$.
\end{enumerate}

Let $\mathsf{Preprocess}(\phi\leq\psi)$ be the finite set $\{\phi_i\leq\psi_i\mid 1\leq i\leq n\}$ of inequalities obtained after the exhaustive application of the previous rules. Next, the following {\em first approximation rule} is applied {\em only once} to every inequality in $\mathsf{Preprocess}(\phi\leq\psi)$:
\[
\frac{\phi\leq\psi}{\mbf{i}_0\leq\phi\ \ \ \psi\leq \mbf{m}_0}
\]
Here, $\mbf{i}_0$ and $\mbf{m}_0$ are a nominal and a conominal respectively. The first-approximation
		step gives rise to systems of inequalities $\{\mbf{i}_0\leq\phi_i, \psi_i\leq \mbf{m}_0\}$ for each inequality in $\mathsf{Preprocess}(\phi\leq\psi)$.

\textbf{Second stage: reduction-elimination cycle.} The goal of the reduction-elimination cycle is to eliminate all propositional variables from the systems
		received from the preprocessing phase. The elimination of each variable is effected by an
		application of one of the Ackermann rules given below. In order to apply an Ackermann rule, the
		system must have a specific shape. The adjunction, residuation, approximation, and splitting rules are used to transform systems into this shape.

\textbf{Residuation rules.} Here below we provide the residuation rules relative to each $f\in \mathcal{F}$ and $g\in \mathcal{G}$ of arity at least $1$: for each $1\leq j\leq n_f$ and each $1\leq k\leq n_g$:
\[
\AxiomC{$f_j[\psi_j]\leq \chi$}
\RightLabel{\footnotesize ($\varepsilon_f(j) = 1$),}
\UnaryInfC{$\psi_j\leq f_j^{\sharp}[\chi]$}
\DisplayProof
~~
\AxiomC{$f_j[\psi_j]\leq \chi$}
\RightLabel{\footnotesize ($\varepsilon_f(j) = \partial$),}
\UnaryInfC{$f_j^{\sharp}[\chi]\leq \psi_j$}
\DisplayProof
\]
\[
\AxiomC{$\chi \leq g_k[\psi_k]$}
\RightLabel{\footnotesize ($\varepsilon_g(k) = \partial$),}
\UnaryInfC{$\psi_k \leq g_k^{\flat}[\chi]$}
\DisplayProof
~~
\AxiomC{$\chi \leq g_k[\psi_k]$}
\RightLabel{\footnotesize ($\varepsilon_g(k) = 1$).}
\UnaryInfC{$g_k^{\flat}[\chi]\leq \psi_k$}
\DisplayProof
\]

\textbf{Approximation rules.} Here below we provide the approximation rules relative to each $f\in \mathcal{F}$ and $g\in \mathcal{G}$ of arity at least $1$: for each $1\leq j\leq n_f$ and each $1\leq k\leq n_g$,

\[
\AxiomC{$\mbf{i}\leq f_j[\psi_j]$}
\RightLabel{\footnotesize $(\varepsilon_f(j) = 1),$}
\UnaryInfC{$\mbf{i}\leq f_j[\mbf{j}]\quad\mbf{j}\leq \psi_j$}
\DisplayProof
\quad
\AxiomC{$g_k[\psi_k]\leq \mbf{m}$}
\RightLabel{\footnotesize $(\varepsilon_g(k) = 1),$}
\UnaryInfC{$g_k[\mbf{n}]\leq \mbf{m}\quad\psi_k\leq\mbf{n}$}
\DisplayProof
\]
\[
\AxiomC{$\mbf{i}\leq f_j[\psi_j]$}
\RightLabel{\footnotesize $(\varepsilon_f(j) = \partial),$}
\UnaryInfC{$\mbf{i}\leq f_j[\mbf{n}]\quad \psi_k\leq\mbf{n}$}
\DisplayProof
\quad
\AxiomC{$g_k[\psi_k]\leq \mbf{m}$}
\RightLabel{\footnotesize $(\varepsilon_g(k) = \partial),$}
\UnaryInfC{$g_k[\mbf{j}]\leq \mbf{m}\quad \mbf{j}\leq \psi_h$}
\DisplayProof
\]
where the variables $\mbf{i},\mbf{j}$ (respectively\ $\mbf{m},\mbf{n}$) are nominals (respectively\ conominals). The nominals and conominals introduced by approximation rules must be {\em fresh}, i.e.\ not occur in the system before applying the rule.
		
\textbf{Ackermann rules.} These rules are the core of $\msf{ALBA}$, since their application eliminates proposition variables. An important feature of Ackermann rules is that they are executed on the whole set of inequalities in which a given variable occurs, and not on a single inequality.
\begin{center}
\AxiomC{$\bigamp \{ \alpha_i \leq p \mid 1 \leq i \leq n \} \& \bigamp \{ \beta_j(p)\leq \gamma_j(p) \mid 1 \leq j \leq m \} \; \Rightarrow \; \mbf{i} \leq \mbf{m}$}
\RightLabel{(RAR)}
\UnaryInfC{$\bigamp \{ \beta_j(\bigvee_{i=1}^n \alpha_i)\leq \gamma_j(\bigvee_{i=1}^n \alpha_i) \mid 1 \leq j \leq m \} \; \Rightarrow \; \mbf{i} \leq \mbf{m}$}
\DisplayProof
\end{center}
where $p$ does not occur in $\alpha_1, \ldots, \alpha_n$, $\beta_{1}(p), \ldots, \beta_{m}(p)$ are positive in $p$, and $\gamma_{1}(p), \ldots, \gamma_{m}(p)$ are negative in $p$.
		
\begin{center}
\AxiomC{$\bigamp \{ p \leq \alpha_i \mid 1 \leq i \leq n \} \& \bigamp \{ \beta_j(p)\leq \gamma_j(p) \mid 1 \leq j \leq m \} \; \Rightarrow \; \mbf{i} \leq \mbf{m}$}
\RightLabel{(LAR)}
\UnaryInfC{$\bigamp \{ \beta_j(\bigwedge_{i=1}^n \alpha_i)\leq \gamma_j(\bigwedge_{i=1}^n \alpha_i) \mid 1 \leq j \leq m \} \; \Rightarrow \; \mbf{i} \leq \mbf{m}$}
\DisplayProof
\end{center}
where $p$ does not occur in $\alpha_1, \ldots, \alpha_n$, $\beta_{1}(p), \ldots, \beta_{m}(p)$ are negative in $p$, and $\gamma_{1}(p), \ldots, \gamma_{m}(p)$ are positive in $p$.
		
		\textbf{Third stage: output.}
		If there was some system in the second stage from which not all occurring propositional variables could be eliminated through the application of the reduction rules, then $\msf{ALBA}$ reports failure and terminates. Else, each system $\{\mbf{i}_0\leq\phi_i, \psi_i\leq \mbf{m}_0\}$ obtained from $\mathsf{Preprocess}(\varphi\leq \psi)$ has been reduced to a system, denoted $\mathsf{Reduce}(\varphi_i\leq \psi_i)$, containing no propositional variables. Let $\msf{ALBA}$$(\varphi\leq \psi)$ be the set of quasi-inequalities {\Large{\&}}$[\mathsf{Reduce}(\varphi_i\leq \psi_i) ]\Rightarrow \mbf{i}_0 \leq \mbf{m}_0$ for each $\varphi_i \leq \psi_i \in \mathsf{Preprocess}(\varphi\leq \psi)$.
Notice that all members of $\msf{ALBA}$$(\varphi\leq \psi)$ are free of propositional variables. $\msf{ALBA}$ returns $\msf{ALBA}$$(\varphi\leq \psi)$ and terminates. The proof of the following theorem is a straightforward generalization of \cite[Theorem 10.11]{CoPa12}, and hence its proof is omitted.

\begin{theorem}\label{Thm:ALBA:Success:Inductive}
For any  language $\mathcal{L}_\mathrm{DLE}$, its corresponding version of $\msf{ALBA}$ succeeds on all inductive $\mathcal{L}_\mathrm{DLE}$-inequalities, which are hence canonical\footnote{~An $\mathcal{L}_\mathrm{DLE}$-inequality $s\leq t$ is {\em canonical} if the class of $\mathcal{L}_\mathrm{DLE}$-algebras defined by $s\leq t$ is closed under canonical extension.} and their corresponding logics are complete with respect to\ the elementary classes of relational structures defined by their first-order correspondents.
\end{theorem}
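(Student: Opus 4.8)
The plan is to follow the by-now standard template for establishing the correctness of $\msf{ALBA}$ in the $\mc{L}_\mrm{DLE}$-setting developed in \cite{CoPa12} and adapted to distributive lattice expansions in \cite{GP2015}; the statement for strict implication is then obtained simply by instantiating $\mc{F}=\emptyset$, $\mc{G}=\{\imp\}$ with $\varepsilon_\imp=(\partial,1)$ and reading off the classification of nodes in Table \ref{table:strict-nodes}, since no feature of the argument is sensitive to anything beyond the order-theoretic behaviour of the connectives. I would split the argument into three blocks: (i) soundness, and — where asserted — invertibility, of each $\msf{ALBA}$-rule; (ii) success of $\msf{ALBA}$ on inductive inequalities; and (iii) the passage from a successful run to canonicity, to a first-order correspondent, and thence to completeness.

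For (i), each rule of the three stages is first shown sound on the class of perfect $\mc{L}_\mrm{DLE}$-algebras: the distribution, splitting and monotone/antitone variable-elimination rules of the first stage, the residuation and approximation rules of the second, and the two Ackermann rules. This is routine and uses only complete (co-)distributivity of perfect DLEs, the existence of the residuals and Galois adjoints witnessing the $\mc{F}^*$- and $\mc{G}^*$-connectives, the fact that every element of a perfect DLE is a join of completely join-irreducibles and a meet of completely meet-irreducibles (for the soundness of the approximation and first-approximation rules), and the Ackermann lemma (for the Ackermann rules); invertibility of the rules so marked is checked at the same time. For canonicity one must in addition re-prove soundness of the reduction rules on the canonical extension $\mf{A}^\delta$ of an \emph{arbitrary} DLE $\mf{A}$, interpreting the $\mc{F}^*/\mc{G}^*$-connectives by the $\sigma$- and $\pi$-extensions and their adjoints and letting the fresh nominals and conominals range over the closed and open elements of $\mf{A}^\delta$; this rests on the order-topological facts recalled in the preliminaries — the compactness of the canonical extension and the Esakia-type commutation lemmas for $\sigma$/$\pi$-extensions — together with a topological (restricted) version of the Ackermann lemma.

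For (ii), given an $(\Omega,\varepsilon)$-inductive inequality $s\leq t$, I would first verify that the first-stage rules preserve $(\Omega,\varepsilon)$-inductivity and terminate, producing $\mathsf{Preprocess}(s\leq t)$, and that after the first-approximation rule each resulting system is in the shape required by the second stage. The heart of the argument is the scheduling of the eliminations. Each $\varepsilon$-critical branch decomposes, by goodness, into a PIA-path from its leaf followed by a Skeleton-path up to the root; the approximation rules peel off the Skeleton part (introducing fresh nominals/conominals and splitting the system), after which each critical occurrence sits at the top of a pure PIA-path, and the residuation rules invert that path so as to display the corresponding variable in an inequality of the form $\alpha\leq p$ or $p\leq\alpha$. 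Condition (2) of Definition \ref{Inducive:Ineq:Def} guarantees that whenever an SRR-node is encountered on a critical branch its non-critical arguments $\gamma_h$ agree with $\varepsilon^\partial$ — so they can themselves be surfaced to the pure side — and contain only variables strictly below the variable being solved for in the dependency order $<_\Omega$. Hence, eliminating variables in any order extending $<_\Omega$ (most dependent first), each application of an Ackermann rule has the prescribed shape and does not reintroduce the eliminated variable, so all propositional variables are removed and the run terminates successfully with the pure set $\msf{ALBA}(s\leq t)$.

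Block (iii) is then standard. By the soundness and invertibility established in (i) on perfect DLEs, $\msf{ALBA}(s\leq t)$ is equivalent to $s\leq t$ on every perfect DLE; by the canonicity-strengthened soundness of (i), $\mf{A}\models s\leq t$ iff $\mf{A}^\delta\models\msf{ALBA}(s\leq t)$ for every DLE $\mf{A}$, and since $\msf{ALBA}(s\leq t)$ is pure and $\mf{A}^\delta$ is perfect this last is equivalent to $\mf{A}^\delta\models s\leq t$, which is the canonicity of $s\leq t$. Reading the pure quasi-inequalities in $\msf{ALBA}(s\leq t)$ across the discrete duality between perfect DLEs and the corresponding relational structures — nominals and conominals becoming the generating points, the $\mc{F}^*/\mc{G}^*$-operations becoming relational image/preimage operators — turns them into a first-order sentence on frames that holds exactly when $s\leq t$ is valid; this is the first-order correspondent. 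Completeness follows by the usual argument: by Lindenbaum--Tarski the logic obtained by adding $s\leq t$ (or, more generally, a set of inductive inequalities) as axioms to $\mbf{L}_\mathbb{DLE}$ is complete with respect to the variety it defines, which by canonicity is generated by its perfect members, i.e. by the complex algebras of the relational structures validating the first-order correspondent, so the logic is complete with respect to that elementary class. I expect the genuine work to be concentrated in (ii): making precise the interaction of the good-branch decomposition with the dependency order so that an Ackermann rule is always applicable and termination is provable — in particular, handling the non-critical arguments of SRR-nodes and ruling out circular dependencies among the variables. The order-topological soundness needed for canonicity in (i) is the second delicate point; but both are entirely parallel to the treatment in \cite{CoPa12,GP2015}, which is why only a sketch is appropriate here.
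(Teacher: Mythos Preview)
Your proposal is correct and follows exactly the approach the paper intends: the paper itself omits the proof entirely, stating only that it is a straightforward generalization of \cite[Theorem 10.11]{CoPa12}. Your three-block sketch elaborates precisely that template from \cite{CoPa12,GP2015}, so there is nothing further to compare beyond noting that you have supplied the details the paper leaves implicit.
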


For the specialization of the algorithm $\msf{ALBA}$ for $\mc{L}_\mrm{DLE}$ to the setting of strict implication logic, the only rules that need to note are the following residuation and approximation rules:
\begin{itemize}
\item[(a)] Residuation rule:
\[
\AxiomC{$\psi\leq\phi\imp\gamma$}
\UnaryInfC{$\phi\bu\psi\leq\gamma$}
\DisplayProof
\]
\item[(b)] Approximation rules:
\[
\AxiomC{$\phi\imp\psi\leq \mathbf{m}$}
\UnaryInfC{$\mathbf{i}\leq \phi \quad  \mathbf{i}\imp \psi\leq \mathbf{m}$}
\DisplayProof
\quad
\AxiomC{$\phi\imp\psi\leq \mathbf{m}$}
\UnaryInfC{$\psi\leq \mathbf{n} \quad  \phi\imp \mathbf{n}\leq \mathbf{m}$}
\DisplayProof
\]
\[
\AxiomC{$\mathbf{i}\leq \phi\bu\psi$}
\UnaryInfC{$\mathbf{j}\leq\phi\quad  \mathbf{i} \leq\mathbf{j}\bu \psi$}
\DisplayProof
\quad
\AxiomC{$\mathbf{i}\leq \phi\bu\psi$}
\UnaryInfC{$\mathbf{j}\leq \psi\quad \mathbf{i}\leq \phi\bu \mathbf{j}$}
\DisplayProof
\]
\end{itemize}

\begin{example}\label{example:alba}
The running of $\msf{ALBA}$ on the inductive $\mc{L}_\mrm{SI}$-sequents (inequalities) in Table \ref{table:cs} will produce pure inequalities as below:
\[
\begin{tabular}{|l|l|}
\hline
Sequent ~&~ Output\\
\hline
(I)~&~$\forall \mathbf{ij} (\mathbf{j}\bu \mathbf{i}\leq \mathbf{j})$\\
(Tr)~&~$\forall \mathbf{ij} (\mathbf{j}\bu \mathbf{i}\leq (\mathbf{j}\bu \mathbf{i})\bu \mathbf{i})$\\
(MP)~&~$\forall \mathbf{i} (\mathbf{i}\leq \mathbf{i}\bu \mathbf{i})$\\
(W)~&~$\forall \mathbf{ij} (\mathbf{i}\bu \mathbf{j}\leq \mathbf{j})$\\
(RT)~&~$\forall \mathbf{ijk} (\mathbf{i}\bu(\mathbf{j}\bu\mathbf{k})\leq \mathbf{i}\bu\mathbf{k})$\\
(B)~&~$\forall \mathbf{ijk} (\mathbf{i}\bu(\mathbf{j}\bu\mathbf{k})\leq (\mathbf{i}\bu\mathbf{j})\bu \mathbf{k})$\\
(B$'$)~&~$\forall \mathbf{ijk} (\mathbf{i}\bu(\mathbf{j}\bu\mathbf{k})\leq (\mathbf{i}\bu\mathbf{k})\bu \mathbf{j})$\\
(C)~&~$\forall \mathbf{ijk} (\mathbf{i}\bu(\mathbf{j}\bu\mathbf{k})\leq \mathbf{j}\bu(\mathbf{i}\bu\mathbf{k}))$\\
(Fr)~&~$\forall \mathbf{ijk} (\mathbf{i}\bu(\mathbf{j}\bu\mathbf{k})\leq (\mathbf{i}\bu\mathbf{j})\bu(\mathbf{i}\bu\mathbf{k}))$\\
(W$'$)~&~$\forall \mathbf{ij} (\mathbf{j}\bu \mathbf{i}\leq \mathbf{j}\bu (\mathbf{j}\bu \mathbf{i}))$\\
(Sym)~&~$\forall \mathbf{ij}\forall \mathbf{mn}(\mathbf{j}\bu\mathbf{i}\leq \mathbf{m}~\&~\mathbf{i}\imp\mathbf{n}\leq\mathbf{m}\Imp \mathbf{j}\leq\mathbf{m})$\\
(Euc)~&~$\forall \mathbf{ij}\forall \mathbf{m}\mathbf{n}_0\mathbf{n}_1(\mathbf{j}\bu\mathbf{i}\leq \mathbf{n}_0~\&~\mathbf{i}\imp\mathbf{n}_1\leq\mathbf{m}~\&~\mathbf{j}\imp\mathbf{n}_0\leq\mathbf{m}\Imp \top\leq\mathbf{m})$\\
(D)~&~$\top\imp\bot\leq\bot$\\
\hline
\end{tabular}
\]
Here we show only the running of $\msf{ALBA}$ on
$(p\imp q)\wedge (q\imp r)\leq p\imp r$ which proceeds as below:
\[
\begin{tabular}{lll}
&& $(p\imp q)\wedge (q\imp r)\leq p\imp r$ (First Approximation)\\
&$\Leftrightarrow$&
$\forall \mathbf{i}\forall \mathbf{m}(\mathbf{i}\leq (p\imp q)\wedge (q\imp r) ~\&~ p\imp r\leq \mathbf{m}\Imp \mathbf{i}\leq \mathbf{m})$ (Spliting)\\
&$\Leftrightarrow$&
$\forall \mathbf{i}\forall \mathbf{m}(\mathbf{i}\leq p\imp q~\&~\mathbf{i}\leq q\imp r ~\&~ p\imp r\leq \mathbf{m}\Imp \mathbf{i}\leq \mathbf{m})$
(Residuation)\\
&$\Leftrightarrow$&
$\forall \mathbf{i}\forall \mathbf{m}(p\bu \mathbf{i}\leq q~\&~q\bu \mathbf{i}\leq  r ~\&~ p\imp r\leq \mathbf{m}\Imp \mathbf{i}\leq \mathbf{m})$ (Approximation)\\
&$\Leftrightarrow$&
$\forall \mathbf{ij}\forall \mathbf{m}(p\bu \mathbf{i}\leq q~\&~q\bu \mathbf{i}\leq  r ~\&~\mathbf{j}\leq p~\&~\mathbf{j}\imp r\leq \mathbf{m}\Imp \mathbf{i}\leq \mathbf{m})$ (RAR)\\
&$\Leftrightarrow$&
$\forall\mathbf{ij}\forall \mathbf{m}(\mathbf{j}\bu \mathbf{i}\leq q~\&~q\bu \mathbf{i}\leq  r ~\&~ \mathbf{j}\imp r\leq \mathbf{m}\Imp \mathbf{i}\leq \mathbf{m})$ (RAR)\\
&$\Leftrightarrow$& $\forall \mathbf{ij}\forall \mathbf{m}((\mathbf{j}\bu \mathbf{i})\bu \mathbf{i}\leq  r ~\&~ \mathbf{j}\imp r\leq \mathbf{m}\Imp \mathbf{i}\leq \mathbf{m})$ (RAR)\\
&$\Leftrightarrow$&
$\forall \mathbf{ij}\forall \mathbf{m}(\mathbf{j}\imp ((\mathbf{j}\bu \mathbf{i})\bu \mathbf{i})\leq \mathbf{m}\Imp \mathbf{i}\leq \mathbf{m})$
\end{tabular}
\]
The output pure quasi-inequality is equivalent to $\forall \mathbf{ij} (\mathbf{j}\bu \mathbf{i}\leq (\mathbf{j}\bu \mathbf{i})\bu \mathbf{i})$. 
\end{example}

The algorithm $\msf{ALBA}$ for $\mc{L}_\mrm{DLE}$-logic 
described above does not only work for the distributive setting but also in general work for non-distributive lattice setting \cite{CoPa11}.
Hence the algorithm $\msf{ALBA}$ can be specialized to the full Lambek calculus. 
For the $\{\bu, \limp, \imp\}$-fragment of full Lambek calculus,
Kurtonina \cite{Kur94} presented a set of 
Sahlqvist formulas from which the first-order correspondents 
can be calculated by the Sahlqvist-van Benthem quantifier elimination procedure. Kurtonina's definition of Sahlqvist formulas is narrower than inductive inequalities provided by $\msf{ALBA}$. For example, The (Fr) inequality 
is inductive but not Sahlqvist. This remark is also discussed in \cite[Example 3.8]{CoPa11}.

\subsection{First-order correspondents}
Given an inductive $\mc{L}_\mrm{DLE}$-inequality 
$\phi\leq \psi$, the running of $\msf{ALBA}$ on it will output a pure quasi-inequality, namely, a quasi-inequality in which no propositional variable occurs. Then the first-order correspondent of $\phi\leq\psi$ is obtained when the Kripke semantics for $\mc{L}_\mrm{DLE}^{*+}$ is given such that $\mc{L}_\mrm{DLE}^{*+}$-terms are translated into a first-order language. For calculating the first-order correspondents of inductive $\mc{L}_\mrm{SI}$-inequalities, there are two kinds of Kripke semantics for the language $\mc{L}_\mrm{LC}^+$ (i.e., the extension of $\mc{L}_\mrm{LC}$ with normals and conominals): {\em binary} and {\em ternary} relational semantics.

{\em Binary relational semantics}.
The binary relational semantics for $\mc{L}_\mrm{LC}$ is given in ordinary Kripke structures. A {\em binary frame} is a 
pair $\mc{F}=(W, R)$ where $W$ is a non-empty set and $R$ is a binary relation on $W$. A {\em binary model} is a triple $\mc{M}=(W, R, V)$ where $(W, R)$ is a binary frame and $V:\msf{Prop}\cup\msf{NOM} \cup \msf{CONOM}\imp \mc{P}(W)$ is a valuation such that (i) for each $\mathbf{i}\in \msf{NOM}$, $V(\mathbf{i})=\{w\}$ for some $w\in W$; and (ii) for each $\mathbf{m}\in \msf{CONOM}$, $V(\mathbf{m})=W-\{u\}$ for some $u\in W$. Note that here there are no additional conditions assumed for the binary relation or the valuation.
For any $\mc{L}_\mrm{SI}$-formula $\phi$, the {\em satisfiability relation} $\mc{M},w\models\phi$ under the binary relational semantics is defined inductively as follows:
\begin{enumerate}
\item $\mc{M},w\models p$ iff $w\in V({p})$.
\item $\mc{M},w\models \mbf{i}$ iff $V(\mbf{i}) = \{w\}$.
\item $\mc{M},w\models \mbf{m}$ iff $V(\mbf{m}) = W-\{w\}$.
\item $\mc{M},w\not\models\bot$.
\item $\mc{M},w\models\phi\wedge\psi$ iff $\mc{M},w\models\phi$ and $\mc{M},w\models\psi$.
\item $\mc{M},w\models\phi\vee\psi$ iff $\mc{M},w\models\phi$ or $\mc{M},w\models\psi$.
\item $\mc{M},w\models\phi\imp\psi$ iff $\forall u\in W(wRu~\&~\mc{M},u\models\phi\Imp \mc{M},u\models\psi)$.
\item $\mc{M},w\models\phi\limp\psi$ iff $\forall u\in W(uRw~\&~\mc{M},u\models\psi\Imp \mc{M},w\models\phi)$.
\item $\mc{M},w\models \phi\bu\psi$ iff $\exists u\in W(uRw~\&~\mc{M},w\models \phi~\&~\mc{M},u\models \psi)$.
\end{enumerate}
Without the semantic clauses for nominals, conominals, $\limp$ and $\bu$, we get the binary relation semantics for strict implication language \cite{CJ03}.\footnote{~In \cite{CJ03}, the least weak strict implication logic $\msf{wK_\sigma}$ is introduced using sequents and shown to be strongly complete with respect to the class of all frames under the binary relational semantics. It is not hard to check that the algebraic sequent system $\mbf{S}_\mathbb{WH}$ is equivalent to $\msf{wK_\sigma}$.} The algorithm $\msf{ALBA}$ provides a general correspondence theory for the issue of the frame definability by sequents raised in \cite{CJ03}.

For a binary frame $\mc{F}=(W, R)$, the dual algebra of $\mc{F}$ is defined as $\mc{F}^+ = (\mc{P}(W), \cup, \cap, \emptyset, W, \imp_R^2, \bu_R^2, \limp_R^2)$ where $\imp_R^2$, $\limp_R^2$ and $\bu_R^2$ are binary operations defined on $\mc{P}(W)$ by setting
\begin{enumerate}
\item $X\imp_R^2 Y = \{w\in W\mid R(w)\cap X\sub Y\}$;
\item $X\limp_R^2 Y = \{w\in W\mid \forall u(uRw~\&~u\in Y\Imp w\in X)\}$;
\item $X\bu_R^2 Y = \{w\in W\mid \exists u(Ruw~\&~w\in X~\&~u\in Y)\}$;
\end{enumerate}
It is easy to prove that the algebra $\mc{F}^+$ is a BDRG. 
As \cite[Theorem 8.1]{CoPa12}, $\msf{ALBA}$ is also correct on binary relational frames. Then we can calculate the first-order correspondents of inductive $\mc{L}_\mrm{SI}$-sequents under the binary relational semantics.

\begin{example}\label{example:binary}
The outputs of $\msf{ALBA}$ running on the inductive inequalities in Example \ref{example:alba} 
can be transformed into first-order correspondents of the corresponding inductive sequents under the binary relational semantics as below:
\[
\begin{tabular}{|l|l|}
\hline
Sequent ~&~Binary Relational Correspondent\\
\hline
(I)~&~$\forall xy(Ryx \supset x=x)$\\
(Tr)~&~$\forall xy(Ryx \supset Ryx)$\\
(MP)~&~$\forall xRxx$\\
(W)~&~$\forall xy(Ryx\supset x=y)$\\
(RT)~&~$\forall xyz(Rxy\wedge Ryz\supset Rxz)$\\
(B)~&~$\forall xyz(Ryx\wedge Rzy \supset Rzx\wedge Ryx)$\\
(B$'$)~&~$\forall xyz(Ryx\wedge Rzy \supset Ryx\wedge Rzx)$\\
(C)~&~$\forall xyz(Ryx\wedge Rzy \supset Rxx\wedge x=y\wedge Rzx)$\\
(Fr)~&~$\forall xyz(Ryx\wedge Rzy \supset Rxx\wedge Ryx\wedge Rzx)$\\
(W$'$)~&~$\forall xy(Ryx\supset Rxx)$\\
(Sym)~&~$\forall xy(Rxy\supset Ryx)$\\
(Euc)~&~$\forall xyz(Rxy\wedge Rxz\supset Ryz)$\\
(D)~&~$\forall x\exists yRxy$\\
\hline
\end{tabular}
\]
Here we calculate only the first-order binary relational correspondents of
(Tr) and (Sym).

(1) The output of running $\msf{ALBA}$ on (Tr) is the pure inequality
$\forall \mathbf{ij} (\mathbf{j}\bu \mathbf{i}\leq (\mathbf{j}\bu \mathbf{i})\bu \mathbf{i})$. 
Note that $z\in \{x\}\bu^2\{y\}$ if and only if $Ryz$ and $z=x$. 
\begin{align*}
\forall \mathbf{ij} (\mathbf{j}\bu \mathbf{i}\leq (\mathbf{j}\bu\mathbf{i})\bu \mathbf{i})
\Leftrightarrow&~
\forall xy(\{x\}\bu^2 \{y\} \sub (\{x\}\bu^2 \{y\})\bu^2 \{y\})\\
\Leftrightarrow&~
\forall xyz(z\in \{x\}\bu^2\{y\} \supset z\in (\{x\}\bu^2 \{y\})\bu^2 \{y\})\\
\Leftrightarrow&~
\forall xyz(Ryz\land z=x \supset \exists u(Ruz\wedge z\in \{x\}\bu^2 \{y\} \wedge u=y))\\
\Leftrightarrow&~
\forall xyz(Ryz\land z=x \supset Ryz\wedge Ryz\wedge z=x)\\
\Leftrightarrow&~
\forall xyz(Ryz\land z=x \supset Ryz\wedge z=x)
\end{align*}
which is a tautology. (Tr) is in fact derivable in $\mbf{S}_\mathbb{WH}$, and the system $\mbf{S}_\mathbb{WH}$ is strongly complete with respect to the class of all binary frames (\cite{CJ03}).

(2) The output of running $\msf{ALBA}$ on (Sym) is the pure quasi-inequality
$\forall \mathbf{ij}\forall \mathbf{mn}(\mathbf{j}\bu\mathbf{i}\leq \mathbf{m}~\&~\mathbf{i}\imp\mathbf{n}\leq\mathbf{m}\Imp \mathbf{j}\leq\mathbf{m})$. Let $\mathbf{j}, \mathbf{i}, \mathbf{m}, \mathbf{n}$ be interpreted as $\{x\}, \{y\}, \{u\}^c, \{v\}^c$ respectively where $(.)^c$ is the complement operation. The calculation is as below:
\begin{align*}
\mathbf{j}\bu\mathbf{i}\leq \mathbf{m} \Leftrightarrow&~\{x\}\bu^2\{y\}\sub \{u\}^c\\
\Leftrightarrow&~\forall z(Ryz\wedge z=x\supset z\neq u)\\
\Leftrightarrow&~Ryx\supset x\neq u\\
\mbf{i}\imp\mbf{n}\leq\mbf{m} \Leftrightarrow&~ \forall w(w\in \{y\}\imp\{v\}^c\supset w\neq u)\\
\Leftrightarrow&~\forall w(\forall w_0(Rww_0\wedge w_0=y\supset w_0\neq v)\supset w\neq u)\\
\Leftrightarrow&~\forall w((Rwy\supset y\neq v)\supset w\neq u)\\
\Leftrightarrow&~\forall w(w=u\supset Rwy\wedge y=v)\\
\Leftrightarrow&~Ruy\wedge y=v\\
\forall \mathbf{ij}\forall \mathbf{mn}(\mathbf{j}\bu\mathbf{i}\leq \mathbf{m}&~\&~\mathbf{i}\imp\mathbf{n}\leq\mathbf{m}\Imp \mathbf{j}\leq\mathbf{m})\\
\Leftrightarrow&~\forall xyuv((Ryx\supset x\neq u)\wedge Ruy\wedge y=v\supset x\neq u)\\
\Leftrightarrow&~\forall xyu((Ryx\supset x\neq u)\wedge Ruy\supset x\neq u)\\
\Leftrightarrow&~\forall xyu(x=u\supset (Ruy\supset Ryx\wedge x=u))\\
\Leftrightarrow&~\forall xy(Rxy\supset Ryx)
\end{align*}
The sequent (Sym) defines the symmetry condition on binary frames.
\end{example}

{\em Ternary relational semantics}. The strict implication can be viewed as a binary modal operator added to distributive lattices, and hence there is a ternary relational semantics for it (cf.~\cite{BDV2001,DGP05}). A {\em ternary frame} is a frame $\mf{F} = (W, S)$ where $S$ is a ternary relation on $W$. A {\em ternary model} is a ternary frame with a valuation. The satisfiability relation $\mf{M}, w\Vdash\phi$ for the language $\mc{L}_\mrm{LC}$ under the ternary relational semantics is defined as usual. In particular, the semantic clauses for implications and the product are the following (cf.~\cite{Kur94}):
\begin{enumerate}
\item $\mf{M},w\Vdash \phi\imp\psi$ iff $\forall u,v(Svuw~\&~\mf{M},u\Vdash\phi~\Imp~\mf{M},v\Vdash\psi)$.
\item $\mf{M},w\Vdash \phi\limp\psi$ iff $\forall u,v(Svwu~\&~\mc{M},u\Vdash\psi~\Imp~\mf{M},v\Vdash\phi)$.
\item $\mf{M},w\Vdash \phi\bu\psi$ iff $\exists u,v(Swuv~\&~\mf{M},u\Vdash\phi~\&~\mf{M},v\Vdash\psi)$.
\end{enumerate}
Given a ternary frame $\mf{F}=(W, S)$, the dual of $\mf{F}$ is defined as $\mf{F}^* = (\mc{P}(W), \cup, \cap, \emptyset, W, \imp_S^3, \bu_S^3, \limp_S^3)$ where $\imp_S^3$, $\limp_S^3$ and $\bu_S^3$ are binary operations defined on $\mc{P}(W)$ by
\begin{enumerate}
\item $X\imp_S^3 Y = \{w\in W\mid \forall uv (Svuw~\&~u\in X\Imp v\in Y)\}$;
\item $X\limp_S^3 Y = \{w\in W\mid \forall uv(Svwu~\&~u\in Y\Imp v\in X\}$;
\item $X\bu_S^3 Y = \{w\in W\mid \exists uv(Swuv~\&~u\in X~\&~v\in Y)\}$.
\end{enumerate}
It is easy to check that $\mf{F}^*$ is a BDRG. Then under the ternary relational semantics one can calculate the first-order correspondents of inductive sequents.

\begin{example}
As Example \ref{example:binary}, we present the first-order correspondents of these inductive sequents under the ternary relational semantics as below:
\[
\begin{tabular}{|l|l|}
\hline
Sequent ~&~Ternary Relational Correspondent\\
\hline
(I)~&~$\forall xyz(Szxy \supset z=x)$\\
(Tr)~&~$\forall xyz(Szxy \supset \exists u(Szuy\wedge Suxy))$\\
(MP)~&~$\forall xSxxx$\\
(W)~&~$\forall xyz(Szyx\supset z=y)$\\
(RT)~&~$\forall xyzuv(Suxv\wedge Svyz\supset Suxz)$\\
(B)~&~$\forall xyzuw(Suxw\wedge Swyz\supset \exists v(Suvz\wedge Svxy))$\\
(B$'$)~&~$\forall xyzuw(Suxw\wedge Swyz\supset \exists v(Suvy\wedge Svxz))$\\
(C)~&~$\forall xyzuw(Suxw\wedge Swyz\supset \exists v(Suyv\wedge Svxz))$\\
(Fr)~&~$\forall xyzuw(Suxw\wedge Swyz\supset \exists v_0v_1(Suv_0v_1\wedge Sv_0xy\wedge Sv_1xz))$\\
(W$'$)~&~$\forall xyz(Suxy\supset \exists u(Szxu\wedge Suxy))$\\
(Sym)~&~$\forall xyv(Svyx\supset Sxxy)$\\
(Euc)~&~$\forall xyzuv(Suxz\wedge Suyz\supset Svxz)$\\
(D)~&~$\forall x\exists yzSzyx$\\
\hline
\end{tabular}
\]
Here we calculate only the first-order ternary relational correspondents of
(Tr) and (Sym). Note that $z\in \{x\}\bu^3\{y\}$ if and only if $Szxy$.
\begin{align*}
\forall \mathbf{ij} (\mathbf{j}\bu \mathbf{i}\leq (\mathbf{j}\bu\mathbf{i})\bu \mathbf{i})~
\Leftrightarrow~&~
\forall xy(\{x\}\bu^3 \{y\} \sub (\{x\}\bu^3 \{y\})\bu^3 \{y\})\\
\Leftrightarrow~&~
\forall xyz(z\in \{x\}\bu^3\{y\} \supset z\in (\{x\}\bu^3 \{y\})\bu^3 \{y\})\\
\Leftrightarrow~&~
\forall xyz(Szxy \supset \exists uv(Szuv\wedge u\in (\{x\}\bu^3 \{y\})\wedge v\in \{y\}))\\
\Leftrightarrow~&~
\forall xyz(Szxy \supset \exists uv(Szuv\wedge Suxy\wedge v\in \{y\}))\\
\Leftrightarrow~&~
\forall xyz(Szxy \supset \exists u(Szuy\wedge Suxy)).
\end{align*}
The result is not a tautology. The sequent (Tr) defines a special class of ternary relational frames.

(2) For (Sym), let $\mathbf{j}, \mathbf{i}, \mathbf{m}, \mathbf{n}$ be interpreted as $\{x\}, \{y\}, \{u\}^c, \{v\}^c$ respectively where $(.)^c$ is the complement operation. The calculation is as below:
\begin{align*}
\mathbf{j}\bu\mathbf{i}\leq \mathbf{m} \Leftrightarrow&~\{x\}\bu^3\{y\}\sub \{u\}^c\\
\Leftrightarrow&~\forall z(Szxy\supset z\neq u)\\
\Leftrightarrow&~\forall z(z=u\supset\ \sim Szxy)\\
\Leftrightarrow&~\sim Suxy\\
\mbf{i}\imp\mbf{n}\leq\mbf{m} \Leftrightarrow&~ \forall w(w\in \{y\}\imp\{v\}^c\supset w\neq u)\\
\Leftrightarrow&~\forall w(\forall w_0w_1(Sw_1w_0w\wedge w_0=y\supset w_1\neq v)\supset w\neq u)\\
\Leftrightarrow&~\forall w(\forall w_1(Sw_1yw\supset w_1\neq v)\supset w\neq u)\\
\Leftrightarrow&~\forall w(\forall w_1(w_1=v\supset\ \sim Sw_1yw)\supset w\neq u)\\
\Leftrightarrow&~\forall w(\ \sim Svyw \supset w\neq u)\\
\Leftrightarrow&~\forall w(w=u\supset Svyw)\\
\Leftrightarrow&~Svyu\\
\forall \mathbf{ij}\forall \mathbf{mn}(\mathbf{j}\bu\mathbf{i}\leq \mathbf{m}&~\&~\mathbf{i}\imp\mathbf{n}\leq\mathbf{m}\Imp \mathbf{j}\leq\mathbf{m})\\
\Leftrightarrow&~\forall xyuv(\sim Suxy\wedge Svyu \supset x\neq u)\\
\Leftrightarrow&~\forall xyuv(x=u\supset (Suxy\vee\sim Svyu))\\
\Leftrightarrow&~\forall xyv(Svyx\supset Sxxy)
\end{align*}
The sequent (Sym)
defines ternary frames satisfying $\forall xyv(Svyx\supset Sxxy)$.
\end{example}

\section{Algebraic correspondence: an 
application of $\msf{ALBA}$}

The algorithm $\msf{ALBA}$ is essentially a calculus for correspondence between non-classical logic and first-order logic.
It is used for obtaining analytic rules in display calculi for DLE-logics \cite{GP2015}.
For the main purpose of the present paper, 
we will use $\msf{ALBA}$ in a modified form, i.e., the Ackermann based calculus $\msf{ALC}$ based on $\msf{BDFNL}$, as a tool for obtaining analytic rules from certain axioms in the strict implication logic such that Gentzen-style cut-free sequent calculi will be constructed in the next section. 
The calculus $\msf{ALC}$ is also a calculus designed for correspondence, not correspondence between DLE-language and first-order language over Kripke frames, but correspondence over BDRGs between the language $\mc{L}_\mrm{SI}$ and the language $\mc{L}_\bu$ built from propositional variables and constants $\top, \bot$ using only the operator $\bu$ of product. The language $\mc{L}_\bu$ is quite natural because many properties of the product, e.g. the associativity, commutativity, contraction and weakening,  can be defined in terms of $\mc{L}_\bu$-sequents. 

Let us start from a motivating example.
The logic $\mbf{S}_\mathbb{WH}$ for weak Heyting algebras is obtained from $\mbf{S}_\mathbb{BDI}$ by adding the inductive sequents $\mrm{(Tr)}~(p\imp q)\wedge(q\imp r)\vdash p\imp r$ and $(\mrm{I})~q\vdash p\imp p$. Obviously, the logic $\mbf{S}_\mathbb{WH}$ can be conservatively extended to the extension of $\msf{BDFNL}$ with all instances of $(\mrm{Tr})$ and $(\mrm{I})$.
From proof-theoretic point of view, we need to know which structural rules the additional axioms can be equivalently transformed into if there exists.\footnote{~In \cite[Section 2.5]{Res94b}, some contraction rules are shown to guarantee certain axioms. For example, $(\mrm{I})$ follows from the weakening rule $X\cdot Y\Imp X$, and
 $(\mrm{Tr})$ follows from Restall's contraction rule $(\mrm{CSyll})~X;Y\Imp (X;Y);Y$.}
 In fact, in $\msf{BDFNL}$, one can prove that $(\mrm{I})$
 is equivalent to $(wl)~p\bu q\vdash p$, and that
 $(\mrm{Tr})$ is equivalent to $(tr)~p\bu s\vdash (p\bu s)\bu s$. Then it is easy to transform the sequents $(wl)$ and $(tr)$ into analytic rules as we will show in the next section. Here we are in fact saying that two sequents define the same class of BDRGS.
Formally, we say that a sequent $\phi\vdash\psi$ {\em algebraically corresponds} to $\phi'\vdash\psi'$ over BDRGs when they define the same class of BDRGs.

\begin{example}\label{exam:algcor}
The fact that the sequent $(\mrm{I})$ algebraically corresponds to $(wl)$ is follows immediately from the residuation law. Now we prove that the sequent $(\mrm{Tr})$ algebraically corresponds to $(tr)$. Let $\mf{A} = (A, \imp, \bu, \limp)$ be any BDRG. We need to show $\forall abd\in A[(a\imp b)\wedge(b\imp d)\leq a\imp d]$ iff $\forall ac\in A[a\bu c\leq (a\bu c)\bu c]$. One proof is as follows:
\[
\begin{tabular}{rll}
&&$\forall abd[(a\imp b)\wedge(b\imp d)\leq a\imp d]$\\
(I) &$\Leftrightarrow$&
$\forall abcd[c\leq a\imp b~\&~c\leq b\imp d\Imp c\leq a\imp d]$\\
(II) &$\Leftrightarrow$&
$\forall abcd[a\bu c\leq b~\&~b\bu c\leq d\Imp a\bu c\leq d]$\\
(III) &$\Leftrightarrow$&
$\forall acb[a\bu c\leq b\Imp a\bu c\leq b\bu c]$\\
(IV) &$\Leftrightarrow$&
$\forall ac[a\bu c\leq (a\bu c)\bu c]$.
\end{tabular}
\]
The steps (I) and (III) are obvious. The step (II) is by residuation in BDRGs. For the `if' part of step (IV),  assume that $\forall ac[a\bu c\leq (a\bu c)\bu c]$. Let $b\in A$ and $a\bu c\leq b$. Then one gets $(a\bu c)\bu c\leq b\bu c$. By the assumption, one gets $a\bu c\leq b\bu c$. The `only if' part is the instantiation of the universal quantifier.
\end{example}

For the algebraic correspondence, we will not take first-order language but $\mc{L}_\bu$ as the corresponding language of $\mc{L}_\mrm{SI}$. Nominals and conominals will not be needed. Instead, we introduce a calculus $\msf{ALC}$ in which propositional variables will play the role of nominals or comonimals in $\msf{ALBA}$.
The calculus $\msf{ALC}$ will be defined using supersequent rules of the form
\[
\frac{\Phi \Imp \phi \vdash\psi  }{\Phi'\Imp\phi'\vdash\psi'}{~({r}).}
\]
We say that $({r})$ is {\em valid} in $\mathbb{BDRG}$ if $\Phi'\Imp\phi'\vdash\psi'$ is valid in all BDRGs validating $\Phi\Imp\phi\vdash\psi$.

\begin{definition}
The Ackermann lemma based calculus $\msf{ALC}$ based on $\msf{BDFNL}$ consists of the following rules:
\begin{itemize}
\item[$(1)$] Splitting rules:
\[
\AxiomC{$\gamma\vdash\phi\wedge\psi,\Phi\Imp\chi\vdash\delta$}
\doubleLine
\RightLabel{\footnotesize ($\wedge$S)}
\UnaryInfC{$\gamma\vdash\phi,\gamma\vdash\psi,\Phi\Imp\chi\vdash\delta$}
\DisplayProof
\quad
\AxiomC{$\phi\vee\psi\vdash\gamma,\Phi\Imp\chi\vdash\delta$}
\doubleLine
\RightLabel{\footnotesize ($\vee$S)}
\UnaryInfC{$\phi\vdash\gamma,\psi\vdash\gamma,\Phi\Imp\chi\vdash\delta$}
\DisplayProof
\]
\item[$(2)$] Residuation rules:
\[
\AxiomC{$\psi\vdash\phi\imp\gamma,\Phi\Imp\chi\vdash\delta$}
\doubleLine
\RightLabel{\footnotesize (RL1)}
\UnaryInfC{$\phi\bu\psi\vdash\gamma,\Phi\Imp\chi\vdash\delta$}
\DisplayProof
\quad
\AxiomC{$\phi\vdash\gamma\limp\psi,\Phi\Imp\chi\vdash\delta$}
\doubleLine
\RightLabel{\footnotesize (RL2)}
\UnaryInfC{$\phi\bu\psi\vdash\gamma,\Phi\Imp\chi\vdash\delta$}
\DisplayProof
\]
\[
\AxiomC{$\Phi\Imp\psi\vdash\phi\imp\gamma$}
\doubleLine
\RightLabel{\footnotesize (RR1)}
\UnaryInfC{$\Phi\Imp\phi\bu\psi\vdash\gamma $}
\DisplayProof
\quad
\AxiomC{$\Phi\Imp\phi\vdash\gamma\limp\psi$}
\doubleLine
\RightLabel{\footnotesize (RR2)}
\UnaryInfC{$\Phi\Imp\phi\bu\psi\vdash\gamma$}
\DisplayProof
\]
\item[$(3)$] Approximation rules:
\[
\AxiomC{$\Phi\Imp\phi\vdash\psi$}
\doubleLine
\RightLabel{\footnotesize (Ap1)}
\UnaryInfC{$p\vdash\phi,\Phi\Imp p\vdash \psi$}
\DisplayProof
\quad
\AxiomC{$\Phi\Imp\phi\vdash\psi$}
\doubleLine
\RightLabel{\footnotesize (Ap2)}
\UnaryInfC{$\psi\vdash p,\Phi\Imp \phi\vdash p$}
\DisplayProof
\]
\[
\AxiomC{$\phi\imp\psi\vdash\gamma,\Phi\Imp\chi\vdash\delta$}
\doubleLine
\RightLabel{\footnotesize ($\imp$Ap1)}
\UnaryInfC{$p\vdash\phi,p\imp\psi\vdash\gamma,\Phi\Imp \chi\vdash \delta$}
\DisplayProof
\quad
\AxiomC{$\phi\imp\psi\vdash\gamma,\Phi\Imp\chi\vdash\delta$}
\doubleLine
\RightLabel{\footnotesize ($\imp$Ap2)}
\UnaryInfC{$\psi\vdash p, \phi\imp p\vdash \gamma,\Phi\Imp\chi\vdash\delta$}
\DisplayProof
\] 
\[
\AxiomC{$\gamma\vdash\phi\imp\psi,\Phi\Imp\chi\vdash\delta$}
\doubleLine
\RightLabel{\footnotesize ($\imp$Ap3)}
\UnaryInfC{$\phi\vdash p,\gamma\vdash p\imp\psi,\Phi\Imp \chi\vdash \delta$}
\DisplayProof
\quad
\AxiomC{$\gamma\vdash\phi\imp\psi,\Phi\Imp\chi\vdash\delta$}
\doubleLine
\RightLabel{\footnotesize ($\imp$Ap4)}
\UnaryInfC{$p\vdash \psi,\gamma\vdash\phi\imp p,\Phi\Imp \chi\vdash\delta$}
\DisplayProof
\] 
\[
\AxiomC{$\phi\vdash\psi\bu\gamma,\Phi\Imp\chi\vdash\delta$}
\doubleLine
\RightLabel{\footnotesize ($\bu$Ap1)}
\UnaryInfC{$p\vdash\psi,\phi\vdash p\bu\gamma,\Phi\Imp \chi\vdash \delta$}
\DisplayProof
\quad
\AxiomC{$\phi\vdash\psi\bu\gamma,\Phi\Imp\chi\vdash\delta$}
\doubleLine
\RightLabel{\footnotesize ($\bu$Ap2)}
\UnaryInfC{$p\vdash \gamma, \phi \vdash \psi\bu p,\Phi\Imp\chi\vdash\delta$}
\DisplayProof
\]
\[
\AxiomC{$\phi\bu\psi\vdash\gamma,\Phi\Imp\chi\vdash\delta$}
\doubleLine
\RightLabel{\footnotesize ($\bu$Ap3)}
\UnaryInfC{$\phi\vdash p, p\bu\psi\vdash\gamma,\Phi\Imp \chi\vdash \delta$}
\DisplayProof
\quad
\AxiomC{$\phi\bu\psi\vdash\gamma,\Phi\Imp\chi\vdash\delta$}
\doubleLine
\RightLabel{\footnotesize ($\bu$Ap4)}
\UnaryInfC{$\psi\vdash p, \phi\bu p\vdash\gamma,\Phi\Imp \chi\vdash \delta$}
\DisplayProof
\]
\[
\AxiomC{$\phi\wedge\psi\vdash\gamma,\Phi\Imp\chi\vdash\delta$}
\doubleLine
\RightLabel{\footnotesize ($\wedge$Ap5)}
\UnaryInfC{$\phi\vdash p, p\wedge\psi\vdash\gamma,\Phi\Imp \chi\vdash \delta$}
\DisplayProof
\quad
\AxiomC{$\phi\wedge\psi\vdash\gamma,\Phi\Imp\chi\vdash\delta$}
\doubleLine
\RightLabel{\footnotesize ($\wedge$Ap6)}
\UnaryInfC{$\psi\vdash p, \phi\wedge p\vdash\gamma,\Phi\Imp \chi\vdash \delta$}
\DisplayProof
\]
\[
\AxiomC{$\phi\vdash\psi\vee\gamma,\Phi\Imp\chi\vdash\delta$}
\doubleLine
\RightLabel{\footnotesize ($\vee$Ap1)}
\UnaryInfC{$p\vdash\psi,\phi\vdash p\vee\gamma,\Phi\Imp \chi\vdash \delta$}
\DisplayProof
\quad
\AxiomC{$\phi\vdash\psi\vee\gamma,\Phi\Imp\chi\vdash\delta$}
\doubleLine
\RightLabel{\footnotesize ($\vee$Ap2)}
\UnaryInfC{$p\vdash \gamma, \phi \vdash \psi\vee p,\Phi\Imp\chi\vdash\delta$}
\DisplayProof
\]
where $p$ is a fresh variable, i.e., a variable which does not occur in previous derivation.

\item[]$(4)$ Ackermann rules:
\[
\AxiomC{$\phi_1\vdash p,\ldots,\phi_n \vdash p, \Phi, \Phi'\Imp \chi\vdash\delta$}
\doubleLine
\RightLabel{\footnotesize (RAck)}
\UnaryInfC{$\Phi[\bigvee_{i=1}^n\phi_i/p],\Phi'\Imp(\chi\vdash\delta)^*$}
\DisplayProof
\]
where (i) $p$ does not occur in $\Phi'$ or $\phi_i$ for $1\leq i\leq n$; (ii) $\Phi=\{\psi_j\vdash\gamma_j\mid \psi_j(+p),\gamma_j(-p),1\leq j\leq m\}$ and $\Phi[\bigvee_{i=1}^n\phi_i/p] = \{\psi_j[\bigvee_{i=1}^n\phi_i/p]\vdash\gamma_j[\bigvee_{i=1}^n\phi_i/p]\mid\psi_j\vdash\gamma_j\in\Phi\}$; and (iii) either $p$ does not occur in $\chi\vdash\delta$ and $(\chi\vdash\delta)^* = \chi\vdash\delta$, or $\chi\vdash\delta$ is positive in $p$ and $(\chi\vdash\delta)^* = \chi[\bigvee_{i=1}^n\phi_i/p]\vdash \delta[\bigvee_{i=1}^n\phi_i/p]$.
\[
\AxiomC{$p\vdash\phi_1 ,\ldots,p\vdash\phi_n, \Phi, \Phi'\Imp \chi\vdash\delta$}
\doubleLine
\RightLabel{\footnotesize (LAck)}
\UnaryInfC{$\Phi[\bigwedge_{i=1}^n\phi_i/p],\Phi'\Imp(\chi\vdash\delta)^*$}
\DisplayProof
\]
where (i) $p$ does not occur in $\Phi'$ or $\phi_i$ for $1\leq i\leq n$; (ii) $\Phi=\{\psi_j\vdash\gamma_j\mid \psi_j(-p),\gamma_j(+p),1\leq j\leq m\}$ and $\Phi[\bigwedge_{i=1}^n\phi_i/p] = \{\psi_j[\bigwedge_{i=1}^n\phi_i/p]\vdash\gamma_j[\bigwedge_{i=1}^n\phi_i/p]\mid\psi_j\vdash\gamma_j\in\Phi\}$; and (iii) either $p$ does not occur in $\chi\vdash\delta$ and $(\chi\vdash\delta)^* = x\vdash\delta$, or $\chi\vdash\delta$ is negative in $p$ and $(\chi\vdash\delta)^* = \chi[\bigwedge_{i=1}^n\phi_i/p]\vdash \delta[\bigwedge_{i=1}^n\phi_i/p]$.
\end{itemize}

The double line in above rules means that the above and the below supersequents can be derived from each other.
A supersequent rule $({r})$ is said to be {\em derivable} in $\msf{ALC}$ if there is a derivation of the conclusion from the premiss of $({r})$ using only rules in $\msf{ALC}$.
\end{definition}

\begin{theorem}[Correctness]
All rules in $\msf{ALC}$ are valid in $\mathbb{BDRG}$.
\end{theorem}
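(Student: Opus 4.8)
The plan is to verify the validity of each rule of $\msf{ALC}$ separately, relative to $\mathbb{BDRG}$, recalling that a rule $\frac{\Phi\Imp\phi\vdash\psi}{\Phi'\Imp\phi'\vdash\psi'}$ is valid iff every BDRG validating the premiss supersequent also validates the conclusion. Since most rules carry a double line (meaning they are bidirectional), I would actually establish, for each such rule, a biconditional: a BDRG $\mf{A}$ validates the upper supersequent iff it validates the lower one. The organizing principle is that every rule either (i) replaces a sequent by a semantically equivalent family of sequents (splitting, residuation), or (ii) introduces a fresh variable acting as a ``minimal'' or ``maximal'' witness (approximation), or (iii) performs an Ackermann-style substitution. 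I would treat these three groups in turn.

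First, the \emph{splitting rules} $(\wedge\msf{S})$ and $(\vee\msf{S})$: in any bounded distributive lattice, $\gamma^{\mf{A}}\leq\phi^{\mf{A}}\wedge\psi^{\mf{A}}$ iff $\gamma^{\mf{A}}\leq\phi^{\mf{A}}$ and $\gamma^{\mf{A}}\leq\psi^{\mf{A}}$, and dually for joins; so the set of sequents on the two sides of the rule is satisfied by exactly the same valuations, whence the whole supersequent is equivalent on every BDRG. The \emph{residuation rules} $(\msf{RL}1)$--$(\msf{RL}2)$, $(\msf{RR}1)$--$(\msf{RR}2)$ are immediate from the residuation law $(\msf{RES})$: $a\bu b\leq c$ iff $b\leq a\imp c$ iff $a\leq c\limp b$, applied pointwise under an arbitrary assignment; this again gives literal equivalence of the two sides, so validity in both directions is trivial. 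These cases are routine.

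The core of the argument is the \emph{approximation rules} and the \emph{Ackermann rules}, and here I would use the Ackermann-lemma reasoning familiar from $\msf{ALBA}$ (cf.\ \cite{CoPa12,GP2015}), adapted to the fact that here \emph{propositional variables}, rather than nominals/conominals, play the role of the fresh placeholders. For an approximation rule such as $(\msf{Ap}1)$, which passes from $\Phi\Imp\phi\vdash\psi$ to $p\vdash\phi,\Phi\Imp p\vdash\psi$ with $p$ fresh: the downward direction is trivial by substituting $p:=\phi$ (this is a legitimate instance since $p$ does not occur in $\Phi,\phi,\psi$, so the hypothesis $p\vdash\phi$ becomes $\phi\vdash\phi$, which is valid, and $p\vdash\psi$ becomes $\phi\vdash\psi$). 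The upward direction uses that, given any assignment validating $\Phi$ and refuting $\phi\vdash\psi$, one extends it by setting $p^{\mf{A}}:=\phi^{\mf{A}}$; freshness of $p$ guarantees $\Phi$ is still validated, $p\vdash\phi$ holds as an equality, and $p\vdash\psi$ now fails. For the connective-specific approximation rules $(\imp\msf{Ap}1)$--$(\imp\msf{Ap}4)$, $(\bu\msf{Ap}1)$--$(\bu\msf{Ap}4)$, $(\wedge\msf{Ap}5)$--$(\wedge\msf{Ap}6)$, $(\vee\msf{Ap}1)$--$(\vee\msf{Ap}2)$, the same idea applies but one must check, coordinate by coordinate, that monotonicity/antitonicity of the connective in the relevant argument makes the substitution $p:=$ (the appropriate subformula) both sound and, via freshness, reversible; e.g.\ for $(\imp\msf{Ap}1)$ one uses that $\imp$ is antitone in its first coordinate, so replacing $\phi$ by a fresh $p$ constrained by $p\vdash\phi$ weakens $\phi\imp\psi$ appropriately. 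Finally, the Ackermann rules $(\msf{RAck})$ and $(\msf{LAck})$ are exactly the right/left Ackermann lemma: given $\{\phi_i\vdash p\}_{i\le n}$ together with sequents $\psi_j\vdash\gamma_j$ in which $p$ occurs only positively on the left and only negatively on the right, the minimal value of $p$ consistent with $\phi_i^{\mf{A}}\le p^{\mf{A}}$ for all $i$ is $\bigvee_i\phi_i^{\mf{A}}$, and by the monotonicity built into the side conditions this minimal choice is the ``hardest'' one, so eliminating $p$ by the substitution $[\bigvee_i\phi_i/p]$ preserves validity in both directions; dually for $(\msf{LAck})$ with $\bigwedge_i\phi_i$. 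I would cite the standard soundness/invertibility proof of the Ackermann rules in the DLE setting and note that no feature of the present restricted language obstructs it.

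The main obstacle I anticipate is \emph{bookkeeping} rather than a genuine mathematical difficulty: one must verify the polarity side-conditions (which $\phi_i,\psi_j,\gamma_j$ are positive/negative in $p$) line up correctly with the monotonicity behaviour of $\bu,\imp,\limp,\wedge,\vee$ in each coordinate, and confirm that in each approximation rule the newly introduced occurrence of $p$ sits in a coordinate of the correct polarity for the ``minimal/maximal witness'' substitution to work in the reverse direction. Once the monotonicity facts (already recorded in the Fact about $\msf{BDFNL}$, together with antitonicity of $\imp$ in its first coordinate and of $\limp$ in its second) are in hand, each rule reduces to a short order-theoretic calculation of exactly the kind illustrated in Examples~\ref{example:alba} and~\ref{exam:algcor}, so I would present the splitting and residuation cases in one line each, do one representative approximation rule and one Ackermann rule in full, and remark that the remaining cases are entirely analogous.
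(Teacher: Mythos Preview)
Your proposal is correct and follows exactly the approach the paper has in mind: the paper's own proof consists of the single sentence ``The proof is routine. For details, see e.g.\ \cite{CoPa12},'' and the argument you sketch---lattice identities for splitting, the residuation law for the residuation rules, fresh-variable substitution plus monotonicity for the approximation rules, and the standard Ackermann lemma for $(\msf{RAck})$/$(\msf{LAck})$---is precisely the content of that reference specialized to the present signature. Your write-up is in fact more detailed than what the paper provides, but it is the same proof.
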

\begin{proof}
The proof is routine. For details, see e.g. \cite{CoPa12}.
\qed
\end{proof}

Given a set of $\mc{L}_\mrm{SI}$-sequents $\Phi$, let $\msf{Alg}(\Phi)$ and $\msf{Alg}^+(\Phi)$ be the class of all BDIs 
and the class of all BDRGs validating all sequents in $\Phi$ respectively. 
Similarly, given a set of $\mc{L}_\bu$-sequents $\Psi$, let $\msf{Alg}^+(\Psi)$ be the class of all BDRGs validating all sequents in $\Psi$. Obviously, an $\mc{L}_\mrm{SI}$-sequent $\phi\vdash\psi$ corresponds to an $\mc{L}_\bu$-sequent $\phi'\vdash\psi'$ over BDRGs if and only if $\msf{Alg}^+(\phi\vdash\psi) = \msf{Alg}^+(\phi'\vdash\psi')$. 

\begin{proposition}\label{fact:cor}
Given an $\mc{L}_\mrm{SI}$-sequent $\phi\vdash\psi$ and an $\mc{L}_\bu$-sequent $\chi\vdash\delta$, if the rule
\[
\frac{\Imp \phi\vdash\psi}{\Imp \chi\vdash\delta}(r)
\]
is derivable in $\msf{ALC}$, then $\phi\vdash\psi$ algebraically corresponds to $\chi\vdash\delta$ over BDRGs.
\end{proposition}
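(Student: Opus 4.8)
The plan is to reduce the statement to the \emph{Correctness} theorem together with the observation that every rule of $\msf{ALC}$ is presented with a double line, hence is sound in \emph{both} directions over $\mathbb{BDRG}$.

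First I would unfold the definitions. By definition, $\phi\vdash\psi$ algebraically corresponds to $\chi\vdash\delta$ over BDRGs precisely when $\msf{Alg}^+(\phi\vdash\psi)=\msf{Alg}^+(\chi\vdash\delta)$. For an arbitrary supersequent $S=(\Phi\Imp\sigma)$ write $\msf{Alg}^+(S)$ for the class of all BDRGs validating $S$; since $\mf{A}\models\Phi$ holds vacuously when $\Phi=\emptyset$, we have $\msf{Alg}^+(\Imp\phi\vdash\psi)=\msf{Alg}^+(\phi\vdash\psi)$ and $\msf{Alg}^+(\Imp\chi\vdash\delta)=\msf{Alg}^+(\chi\vdash\delta)$. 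Hence it suffices to show $\msf{Alg}^+(\Imp\phi\vdash\psi)=\msf{Alg}^+(\Imp\chi\vdash\delta)$.

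Second, because $(r)$ is derivable using only the (unary) rules of $\msf{ALC}$, a derivation witnessing this is a finite chain of supersequents $S_0=(\Imp\phi\vdash\psi),S_1,\dots,S_k=(\Imp\chi\vdash\delta)$ in which each $S_{i+1}$ is obtained from $S_i$ by one rule application. The key step is to record that for every rule of $\msf{ALC}$ the premiss and the conclusion are validated by exactly the same BDRGs, i.e.\ $\msf{Alg}^+(\text{premiss})=\msf{Alg}^+(\text{conclusion})$. This follows from the Correctness theorem: reading each double line as the two ordinary one-premiss rules (downwards and upwards), Correctness gives both inclusions $\msf{Alg}^+(\text{premiss})\subseteq\msf{Alg}^+(\text{conclusion})$ and $\msf{Alg}^+(\text{conclusion})\subseteq\msf{Alg}^+(\text{premiss})$. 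Chaining the resulting equalities along $S_0,\dots,S_k$ gives $\msf{Alg}^+(S_0)=\msf{Alg}^+(S_k)$, and combining with the first paragraph yields $\msf{Alg}^+(\phi\vdash\psi)=\msf{Alg}^+(\chi\vdash\delta)$, as desired. (Example~\ref{exam:algcor} is precisely such a chain of equivalences for $(\mrm{Tr})$ and $(tr)$, which one now reads off mechanically from the corresponding $\msf{ALC}$-derivation.)

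The substance thus lies entirely inside the Correctness theorem, and there the only genuinely non-routine direction is the reversibility of the Ackermann rules $(\mrm{RAck})$ and $(\mrm{LAck})$. Their downward direction is the order-theoretic Ackermann lemma: it works because the eliminated variable $p$ is fresh in $\Phi'$ and in each $\phi_i$, and because the remaining inequalities carry the prescribed polarities in $p$, so that substituting the minimal witness $\bigvee_{i=1}^n\phi_i$ (respectively the maximal witness $\bigwedge_{i=1}^n\phi_i$) for $p$ preserves validity; the side condition defining $(\chi\vdash\delta)^*$ is exactly what makes this go through. The approximation rules are handled similarly but more easily, using freshness of $p$ to instantiate the universally bound $p$ by $\phi$, $\psi$, etc., and the splitting and residuation rules are immediate from the behaviour of $\wedge,\vee$ and the residuation law (RES). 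All of these verifications are standard and are subsumed in the proof of the Correctness theorem; cf.\ \cite{CoPa12}.
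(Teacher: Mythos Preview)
Your proof is correct and follows essentially the same approach as the paper: both appeal directly to the Correctness theorem, using the fact that every $\msf{ALC}$-rule is double-lined (hence sound in both directions) to conclude that the premiss and conclusion of $(r)$ define the same class of BDRGs. Your version simply makes explicit the chain-of-equalities argument along the derivation and the identification $\msf{Alg}^+(\Imp\sigma)=\msf{Alg}^+(\sigma)$, which the paper leaves implicit in its one-line proof.
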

\begin{proof}
Assume that the rule ($r$) is derivable in $\msf{ALC}$. 
By the correctness of $\msf{ALC}$, the premiss $\phi\vdash\psi$ and the conclusion $\chi\vdash\delta$ defines the same BDRGs, i.e., $\msf{Alg}^+(\phi\vdash\psi) = \msf{Alg}^+(\phi'\vdash\psi')$.
\qed
\end{proof}

By Proposition \ref{fact:cor}, one obtains a proof-theoretic tool for algebraic correspondence over BDRGs between the languages $\mc{L}_\mrm{SI}$ and $\mc{L}_\bu$.

\begin{example}\label{exam:correspondents}
Some $\mc{L}_\mrm{SI}$-sequents (inequalities) in Table \ref{table:cs} and their algebraic correspondents in $\mc{L}_\bu$ are listed in Table \ref{table:cp}.
\begin{table}[htbp]
\caption{Some Algebraic Correspondents}
\vspace{-1em}
\[
\begin{tabular}{|ll|ll|}
\hline
&$\mc{L}_\mrm{SI}$-sequent & & $\mc{L}_\bu$-sequent
\\
\hline
(I) & $q\vdash p\imp p$ & $(wl)$ & $p\bu q \vdash p$\\
(Tr) & $(p\imp q)\wedge (q\imp r)\vdash p\imp r$ & $(tr)$ & $p\bu s\vdash (p\bu s)\bu s$\\
(MP) & $p\wedge (p\imp q)\vdash q$ & $(ct)$ & $p\vdash p\bu p$\\
(W) & $p\vdash q\imp p$ & $(wr)$ & $q\bu p \vdash p$\\
(RT) & $p\imp q \vdash  r\imp (p\imp q)$ & $(rt)$ & $p\bu (r\bu s)\vdash p\bu s$\\
(B) & $p \imp q\vdash  (r\imp p)\imp (r\imp q)$& $(b)$ & $r\bu (s\bu t)\vdash (r\bu t)\bu s$\\
(B$'$) & $p\imp q\vdash (q\imp r)\imp (p\imp r)$ & $(b')$ & $p\bu (t\bu s)\vdash (p\bu s)\bu t$ \\
({C}) & $p\imp (q\imp r)\vdash q\imp (p\imp r)$ & $({c})$ & $p\bu(q\bu s)\vdash q\bu (p\bu s)$\\
(Fr) & $p\imp (q\imp r)\vdash (p\imp q)\imp (p\imp r)$ & $(fr)$ & $p\bu(u\bu s)\vdash (p\bu u)\bu (p\bu s)$\\
(W$'$) & $p\imp (p\imp q)\vdash p\imp q$ & ($w'$) & $p\bu r \vdash p\bu (p\bu r)$\\
\hline
\end{tabular}
\]
\vspace{-1em}
\label{table:cp}
\end{table}

$(\mrm{Tr})$ One proof is as follows:
\[
\AxiomC{$\Imp (p\imp q)\wedge (q\imp r)\vdash (p\imp r)$}
\RightLabel{\footnotesize (AAp1)}
\UnaryInfC{$s\vdash (p\imp q)\wedge (q\imp r)\Imp s\vdash p\imp r$}
\RightLabel{\footnotesize ($\wedge$S)}
\UnaryInfC{$s\vdash p\imp q, s\vdash q\imp r \Imp s\vdash p\imp r$}
\RightLabel{\footnotesize (RL1, RR1)}
\UnaryInfC{$p\bu s\vdash q, q\bu s\vdash r\Imp p\bu s\vdash r$}
\RightLabel{\footnotesize (RAck)}
\UnaryInfC{$p\bu s\vdash q\Imp p\bu s\vdash q\bu s$}
\RightLabel{\footnotesize (RAck)}
\UnaryInfC{$\Imp p\bu s\vdash (p\bu s)\bu s$}
\DisplayProof
\]
Other pairs of corresponding sequents can be proved similarly. See Appendix \ref{appendix:cp}.
\end{example}

\begin{remark}
Some inductive $\mc{L}_\mrm{SI}$-sequents have algebraic correspondents in $\mc{L}_\bu$ using $\msf{ALC}$. But it is not clear whether all inductive sequents in $\mc{L}_\mrm{SI}$ have algebraic correspondents in $\mc{L}_\bu$. Consider the sequents (Sym), (Euc) and (D). Our conjecture is that these sequents never correspond to any $\mc{L}_\bu$-sequents. Conversely, we conjecture that not all $\mc{L}_\bu$-sequents have their algebraic correspondents in $\mc{L}_\mrm{SI}$. Consider the inverse of $(tr)$ in Table \ref{table:cp}. We start from $(p\bu s)\bu s\vdash p\bu s$ and apply $\msf{ALC}$. The first step is to use approximation rule, and we get
\[
p\bu s\vdash q\Imp (p\bu s)\bu s\vdash q
\]
Using residuation rules, we get
\[
s\vdash p\imp q\Imp s\vdash (p\bu s)\imp q
\]
The next step is to consider using the left Ackermann rule because the term $p\bu s$ on the right hand side takes a negative position. Then we have
\[
t\leq p\bu s, s\vdash p\imp q\Imp s\vdash t\imp q
\]
Then there is no way to continue $\msf{ALC}$. It is rather likely that the sequent $(p\bu s)\bu s\vdash p\bu s$ has no algebraic correspondent in $\mc{L}_\mrm{SI}$. The general question on the expressive power of $\msf{ALC}$ will be explored in future work.
\end{remark}

Let $\Phi$ be a set of $\mc{L}_\mrm{SI}$-sequents and $\Psi$ a set of $\mc{L}_\bu$-sequents. We use the notation $\Phi\equiv_\msf{ALC}\Psi$ to denote that $\Psi$
consists of $\mc{L}_\bu$-sequents obtained 
from sequents in $\Phi$ using $\msf{ALC}$. Let $\mbf{S}_\mathbb{BDI}(\Phi)$ be the algebraic sequent system obtained from $\msf{S}_\mathbb{BDI}$ by adding all instances of sequents in $\Phi$ as axioms.
Similarly, let $\msf{BDFNL}(\Psi)$ be the algebraic sequent system obtained from $\msf{BDFNL}$ by adding all instances of sequents in $\Psi$ as axioms.
Clearly $\mbf{S}_\mathbb{BDI}(\Phi)$ is sound and complete with respect to $\msf{Alg}(\Phi)$, and $\msf{BDFNL}(\Psi)$ is sound and complete with respect to $\msf{Alg}^+(\Psi)$.

\begin{lemma}\label{lemma:reduct1}
Let $\Phi$ be a set of inductive $\mc{L}_\mrm{SI}$-sequents and $\Psi$ a set of $\mc{L}_\bu$-sequents. Assume $\Phi\equiv_\msf{ALC}\Psi$. For every algebra $\mf{A}$ in $\msf{Alg}^+(\Psi)$, its $(\wedge, \vee, \bot, \top, \imp)$-reduct is an algebra in $\msf{Alg}(\Phi)$.
\end{lemma}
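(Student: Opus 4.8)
The plan is to exploit the correctness of $\msf{ALC}$ (already established) together with the two completeness facts recorded just before the statement, namely that $\mbf{S}_\mathbb{BDI}(\Phi)$ is sound and complete for $\msf{Alg}(\Phi)$ and that $\msf{BDFNL}(\Psi)$ is sound and complete for $\msf{Alg}^+(\Psi)$. Fix an algebra $\mf{A}\in\msf{Alg}^+(\Psi)$ and write $\mf{A}^-$ for its $(\wedge,\vee,\bot,\top,\imp)$-reduct. First I would observe that $\mf{A}^-$ is a BDI: this is precisely the specialization of Lemma~\ref{lemma:reduct-DLE} to the strict-implication signature, already noted in the excerpt (``the $(\wedge,\vee,\bot,\top,\imp)$-reduct of a BDRG is a BDI''), since the conditions (C1)--(C3) on $\imp$ in a BDRG follow from residuation with $\bu$ exactly as $\mf{A}\models\msf{BDFNL}$ requires. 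So it remains only to check that $\mf{A}^-$ validates every $\mc{L}_\mrm{SI}$-sequent in $\Phi$.

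The key step is the following. Since $\Phi\equiv_\msf{ALC}\Psi$, each sequent $\psi'\vdash\delta'$ in $\Psi$ is obtained from some $\phi\vdash\psi$ in $\Phi$ by a derivation in $\msf{ALC}$ of the rule $\dfrac{\Imp\phi\vdash\psi}{\Imp\psi'\vdash\delta'}$; conversely, every $\phi\vdash\psi\in\Phi$ gives rise in this way to its $\mc{L}_\bu$-correspondent in $\Psi$. By Proposition~\ref{fact:cor}, for each such pair we have $\msf{Alg}^+(\phi\vdash\psi)=\msf{Alg}^+(\psi'\vdash\delta')$ over BDRGs, i.e.\ the two sequents are algebraically equivalent on all BDRGs. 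Now $\mf{A}$, being in $\msf{Alg}^+(\Psi)$, is a BDRG validating every $\psi'\vdash\delta'\in\Psi$; hence by this equivalence $\mf{A}$ validates every $\phi\vdash\psi\in\Phi$ as well. But validity of an $\mc{L}_\mrm{SI}$-sequent $\phi\vdash\psi$ in the BDRG $\mf{A}$ is the same statement as its validity in the reduct $\mf{A}^-$, because $\phi$ and $\psi$ contain only the connectives $\wedge,\vee,\bot,\top,\imp$, which are interpreted identically in $\mf{A}$ and $\mf{A}^-$, and $\vdash$ is interpreted by the (common) lattice order $\leq$. Therefore $\mf{A}^-\models\Phi$, so $\mf{A}^-\in\msf{Alg}(\Phi)$, as required.

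I expect the only genuine subtlety to be the precise bookkeeping around $\Phi\equiv_\msf{ALC}\Psi$: one must make sure the notation means that each member of $\Psi$ is $\msf{ALC}$-derived from a member of $\Phi$ \emph{and} that $\Psi$ covers all of $\Phi$ (so that $\mf{A}^-\models\Phi$, not merely some fragment), and that derivability of a single-conclusion rule $\Imp\phi\vdash\psi / \Imp\psi'\vdash\delta'$ in $\msf{ALC}$ really does give the two-way containment $\msf{Alg}^+(\phi\vdash\psi)=\msf{Alg}^+(\psi'\vdash\delta')$ rather than just one inclusion. The latter is where the invertibility of the $\msf{ALC}$ rules (the ``double line'') is used, and where Proposition~\ref{fact:cor} is invoked; everything else is the routine transfer of validity between a BDRG and its BDI-reduct. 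Note that the hypothesis that the sequents in $\Phi$ are \emph{inductive} is not actually needed for this direction of the reduct lemma — it only guarantees, via the running of $\msf{ALC}$, that a suitable $\Psi$ exists in the first place; given $\Phi\equiv_\msf{ALC}\Psi$ the argument above goes through regardless.
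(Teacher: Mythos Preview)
Your proposal is correct and follows essentially the same approach as the paper's own proof: take $\mf{A}\in\msf{Alg}^+(\Psi)$, use $\Phi\equiv_\msf{ALC}\Psi$ (via the correctness of $\msf{ALC}$, i.e.\ Proposition~\ref{fact:cor}) to conclude $\mf{A}\models\Phi$, and then pass to the reduct. You have simply spelled out more carefully the steps the paper leaves implicit---that the reduct is a BDI, that validity of an $\mc{L}_\mrm{SI}$-sequent is unchanged under taking the reduct, and that the inductive hypothesis on $\Phi$ is not used in this direction.
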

\begin{proof}
Let $\mf{A}\in \msf{Alg}^+(\Psi)$. Then $\mf{A}\models \Psi$. By $\Phi\equiv_\msf{ALC}\Psi$, one gets $\mf{A}\models \Phi$. Hence the $(\wedge, \vee, \bot, \top, \imp)$-reduct of $\mf{A}$ is an algebra in $\msf{Alg}(\Phi)$.
\qed
\end{proof}

\begin{lemma}\label{lemma:extension1}
Let $\Phi$ be a set of inductive $\mc{L}_\mrm{SI}$-sequents and $\Psi$ a set of $\mc{L}_\bu$-sequents. Assume $\Phi\equiv_\msf{ALC}\Psi$. 
For every algebra $\mf{A} = (A, \imp)$ in $\msf{Alg}(\Phi)$, its canonical extension $\mf{A}^\delta = (A^\delta, \imp^\pi, \bu, \limp)$ is in $\msf{Alg}^+(\Psi)$.
\end{lemma}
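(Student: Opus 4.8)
The plan is to combine the canonicity of inductive $\mc{L}_\mrm{SI}$-sequents (Theorem \ref{Thm:ALBA:Success:Inductive}, specialized to the strict implication signature) with the $\msf{ALC}$-based algebraic correspondence of Proposition \ref{fact:cor}. Fix an algebra $\mf{A} = (A, \imp) \in \msf{Alg}(\Phi)$. By the discussion preceding Corollary \ref{cor:bdfnl}, its canonical extension $(A^\delta, \imp^\pi)$ is again a BDI, and the operations $\bu$ and $\limp$ defined there from $\imp^\pi$ satisfy the residuation law, so $\mf{A}^\delta = (A^\delta, \imp^\pi, \bu, \limp)$ is a BDRG; it therefore makes sense to ask whether $\mf{A}^\delta \in \msf{Alg}^+(\Psi)$.

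The first step is to show $\mf{A}^\delta \models \Phi$. The normal $\mc{L}_\mrm{SI}$-algebras are precisely the BDIs, and $\mf{A} \in \msf{Alg}(\Phi)$ means the BDI $(A,\imp)$ validates every sequent in $\Phi$. Since every sequent in $\Phi$ is inductive, Theorem \ref{Thm:ALBA:Success:Inductive} applies and tells us that the class of BDIs defined by each such sequent is closed under canonical extension. As $\imp \in \mc{G}$, its canonical extension is the $\pi$-extension $\imp^\pi$, so the canonical extension of the BDI $(A,\imp)$ is exactly $(A^\delta, \imp^\pi)$, and hence this BDI validates every sequent of $\Phi$. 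Because the sequents in $\Phi$ only involve $\wedge,\vee,\bot,\top,\imp$ and not $\bu$ or $\limp$, this validity transfers verbatim to the BDRG $\mf{A}^\delta$.

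The second step transfers validity from $\Phi$ to $\Psi$ via $\msf{ALC}$. By the hypothesis $\Phi \equiv_\msf{ALC} \Psi$, every $\mc{L}_\bu$-sequent $\chi\vdash\delta \in \Psi$ arises from some $\mc{L}_\mrm{SI}$-sequent $\phi\vdash\psi \in \Phi$ through a derivation in $\msf{ALC}$ of the rule $\frac{\Imp \phi\vdash\psi}{\Imp \chi\vdash\delta}$. By Proposition \ref{fact:cor}, $\phi\vdash\psi$ and $\chi\vdash\delta$ define the same class of BDRGs, i.e. $\msf{Alg}^+(\phi\vdash\psi) = \msf{Alg}^+(\chi\vdash\delta)$. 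Since $\mf{A}^\delta \in \msf{Alg}^+(\phi\vdash\psi)$ by the previous step, we conclude $\mf{A}^\delta \in \msf{Alg}^+(\chi\vdash\delta)$, that is, $\mf{A}^\delta \models \chi\vdash\delta$. As $\chi\vdash\delta \in \Psi$ was arbitrary, $\mf{A}^\delta \models \Psi$, hence $\mf{A}^\delta \in \msf{Alg}^+(\Psi)$, which is the claim.

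I do not expect a genuine obstacle here: the content is essentially an orchestration of earlier results. The step needing care is the bookkeeping in the first part, namely that the $\imp^\pi$ occurring in the statement is literally the DLE-canonical extension of $\imp$ to which Theorem \ref{Thm:ALBA:Success:Inductive} applies (so one must note that $\imp\in\mc{G}$ forces the $\pi$-extension), and that the presence of the auxiliary operations $\bu,\limp$ on $A^\delta$ is harmless, since the inductive sequents of $\Phi$ live in the strictly smaller signature $\mc{L}_\mrm{SI}$ and their truth value is therefore unaffected by the residuals.
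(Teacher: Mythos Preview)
Your proposal is correct and follows essentially the same approach as the paper's own proof: establish that $\mf{A}^\delta$ is a BDRG, use canonicity of inductive sequents to get $(A^\delta,\imp^\pi)\in\msf{Alg}(\Phi)$, and then invoke the $\msf{ALC}$-correspondence to conclude $\mf{A}^\delta\models\Psi$. The paper's proof is simply a terser version of what you wrote, and your extra care about the $\pi$-extension and the harmlessness of the added residuals is exactly the bookkeeping the paper leaves implicit.
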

\begin{proof}
Obviously, $\mf{A}^\delta$ is a BDRG. Moreover, $(A^\delta, \imp^\pi)\in \msf{Alg}(\Phi)$ because every inductive sequent in $\Phi$ is canonical. By $\Phi\equiv_\msf{ALC}\Psi$, one gets $\mf{A}^\delta\models \Psi$.
\qed
\end{proof}

By Lemma \ref{lemma:reduct1} and Lemma \ref{lemma:extension1}, one gets the following theorem immediately:

\begin{theorem}\label{thm:con2}
Let $\Phi$ be a set of inductive sequents in $\mc{L}$ and $\Psi$ a set of $\mc{L}_\bu$-sequents. Assume $\Phi\equiv_\msf{ALC}\Psi$. The algebraic sequent $\msf{BDFNL}(\Psi)$ is a conservative extension of $\msf{S}_\mathbb{BDI}(\Phi)$.
\end{theorem}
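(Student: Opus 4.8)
The plan is to combine the two preceding lemmas in exactly the same shape as the proof of the basic conservativity result (Theorem~\ref{th:conservative extension}, specialized via Corollary~\ref{cor:bdfnl}), only now relative to the added axioms $\Phi$ and $\Psi$. The two directions of the conservativity claim will be handled separately, and the heart of the argument is the soundness-and-completeness bridge: $\msf{S}_\mathbb{BDI}(\Phi)$ is sound and complete with respect to $\msf{Alg}(\Phi)$, and $\msf{BDFNL}(\Psi)$ is sound and complete with respect to $\msf{Alg}^+(\Psi)$ (both noted in the excerpt just before Lemma~\ref{lemma:reduct1}).

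\textbf{Easy direction.} Suppose an $\mc{L}_\mrm{SI}$-sequent $\phi\vdash\psi$ is derivable in $\msf{S}_\mathbb{BDI}(\Phi)$. By completeness of $\msf{S}_\mathbb{BDI}(\Phi)$, it is valid in every algebra of $\msf{Alg}(\Phi)$. Now take any $\mf{A}\in\msf{Alg}^+(\Psi)$; by Lemma~\ref{lemma:reduct1}, its $(\wedge,\vee,\bot,\top,\imp)$-reduct lies in $\msf{Alg}(\Phi)$, so $\phi\vdash\psi$ is valid in that reduct, hence (since $\phi,\psi$ are $\mc{L}_\mrm{SI}$-formulas and validity only depends on the interpretation of the connectives occurring in them) valid in $\mf{A}$. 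Thus $\phi\vdash\psi$ is valid in all of $\msf{Alg}^+(\Psi)$, and by completeness of $\msf{BDFNL}(\Psi)$ it is derivable there. This establishes that $\msf{BDFNL}(\Psi)$ is an extension of $\msf{S}_\mathbb{BDI}(\Phi)$.

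\textbf{Conservativity direction.} Conversely, suppose the $\mc{L}_\mrm{SI}$-sequent $\phi\vdash\psi$ is \emph{not} derivable in $\msf{S}_\mathbb{BDI}(\Phi)$. By completeness of $\msf{S}_\mathbb{BDI}(\Phi)$ there is an algebra $\mf{A}=(A,\imp)\in\msf{Alg}(\Phi)$ and an assignment refuting $\phi\vdash\psi$. Pass to the canonical extension $\mf{A}^\delta=(A^\delta,\imp^\pi,\bu,\limp)$: since $A$ embeds into $A^\delta$ as a subalgebra (for the $\mc{L}_\mrm{SI}$-signature, using that $\imp^\pi$ extends $\imp$), the refutation transfers along the canonical embedding, so $\phi\vdash\psi$ is still refuted in $\mf{A}^\delta$. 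By Lemma~\ref{lemma:extension1}, $\mf{A}^\delta\in\msf{Alg}^+(\Psi)$, so $\phi\vdash\psi$ is not valid in all of $\msf{Alg}^+(\Psi)$, and by completeness of $\msf{BDFNL}(\Psi)$ it is not derivable there. Combining the two directions yields the theorem.

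\textbf{Main obstacle.} The argument is mostly bookkeeping; the one point that deserves care is the step ``the refutation transfers to $\mf{A}^\delta$.'' This relies on $\iota\colon A\hookrightarrow A^\delta$ being an $\mc{L}_\mrm{SI}$-embedding, i.e.\ on $\imp^\pi$ restricting to $\imp$ on $A$ — which is exactly what the construction of the canonical extension guarantees (as recalled in the discussion of $h^\sigma,h^\pi$ and the remark that both extend $h$) — together with the fact that $\phi,\psi$ only involve $\mc{L}_\mrm{SI}$-connectives, so their values under $\iota\circ v$ in $\mf{A}^\delta$ are the $\iota$-images of their values under $v$ in $\mf{A}$, and $\iota$ reflects the order. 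This is precisely the move already used in the proof of Theorem~\ref{th:conservative extension}, so no new difficulty arises; the only reason Lemmas~\ref{lemma:reduct1} and~\ref{lemma:extension1} (and hence the hypothesis $\Phi\equiv_\msf{ALC}\Psi$ with $\Phi$ inductive, guaranteeing canonicity of each sequent in $\Phi$ by Theorem~\ref{Thm:ALBA:Success:Inductive}) are needed is to keep $\mf{A}^\delta$ inside $\msf{Alg}^+(\Psi)$ and to pull reducts back into $\msf{Alg}(\Phi)$.
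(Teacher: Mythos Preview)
Your proposal is correct and follows exactly the approach the paper intends: the paper simply states that the theorem follows ``immediately'' from Lemma~\ref{lemma:reduct1} and Lemma~\ref{lemma:extension1}, and you have spelled out precisely how those two lemmas plug into the soundness--completeness--canonical-extension template of Theorem~\ref{th:conservative extension}. There is nothing to add or correct.
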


\begin{example}
Notice that $(\mrm{I})~q\vdash p\imp p$ corresponds to $(wl)~p\bu q\vdash p$, and $(\mrm{Tr})~(p\imp q)\wedge (q\imp r)\vdash p\imp r$ corresponds to $(tr)~p\bu s\vdash (p\bu s)\bu s$. Both $(\mrm{I})$ and $(\mrm{Tr})$ are inductive sequents. The algebras defined by $(wl)$ and $(tr)$ are BDRGs satisfying the conditions:
$(wl)~ a\bu b\leq a$ and $(tr)~a\bu b\leq (a\bu b)\bu b$. We call such algebras {\em residuated weak Heyting algebras}, and the class of such algebras is denoted by $\mathbb{RWH}$. By Theorem \ref{thm:con2}, the algebraic sequent system $\mbf{S}_\mathbb{RWH}$ is a conservative extension of $\mbf{S}_\mathbb{WH}$. For sequents in Example \ref{exam:correspondents}, one can get similar conservativity results.
\end{example}

\section{Gentzen-style sequent calculi}
In this section, we will first introduce a Gentzen-style cut-free sequent calculus $\msf{G}_\msf{BDFNL}$ for $\msf{BDFNL}$\footnote{~The sequent system for $\msf{BDFNL}$ defined in e.g. \cite{Bus06,BusF09} does not admit cut elimination. When the distributivity is added as an axiom, the sequent $\phi\wedge(\psi\vee (\chi\vee\delta))\vdash (\phi\wedge\psi)\vee ((\phi\vee\chi)\vee(\phi\wedge\delta))$ cannot be proved without cut.}, which will be presented by introducing structure operators separately for connectives $\wedge$ and $\bu$. By the conservativity of $\msf{BDFNL}$ over $\msf{S_{BDI}}$, and the subformula property of 
$\msf{G}_\msf{BDFNL}$, one gets a cut-free sequent calculus for $\msf{S_{BDI}}$.
Let $\mbf{S}_\mathbb{BDI}(\Phi)$ be an extension of $\mbf{S}_\mathbb{BDI}$ with inductive sequents in $\Phi$ as axioms which have algebraic correspondents in $\mc{L}_\bu$. One can transform these axioms into analytic rules, and if these rules which are added to $\msf{G}_\msf{BDFNL}$ does not effect the subformula property, one gets a cut-free sequent system for $\mbf{S}_\mathbb{BDI}(\Phi)$ by omitting additional rules for the two additional operators $\bu$ and $\limp$.

\subsection{The sequent calculus $\msf{G}_\msf{BDFNL}$}

\begin{definition}
Let $\odot$ and $\owedge$ be structural operators for the product $\bu$ and $\wedge$ respectively.
The set of all {\em structures} is defined inductively as follows:
\[
\Gamma::= \phi\mid (\Gamma \odot \Gamma)\mid (\Gamma \owedge \Gamma),
\]
where $\phi\in \mc{L}_\mrm{LC}$. We use $\Gamma, \Delta, \Sigma$ etc. with indexes to denote structures. Each structure $\Gamma$ is associated with a term $\tau(\Gamma) \in \mc{L}_\mrm{LC}$ defined inductively by
\begin{itemize}
\item $\tau(\phi) = \phi$, for every $\phi\in\mc{L}_\mrm{LC}$;
\item $\tau(\Gamma \odot\Delta) = \tau(\Gamma)\bu \tau(\Delta)$;
\item $\tau(\Gamma \owedge \Delta) =\tau(\Gamma)\wedge \tau(\Delta)$.
\end{itemize}
A {\em consecution} (sequent) is $\Gamma \vdash \phi$ where $\Gamma$ is a structure and $\phi$ is an $\mc{L}_\mrm{LC}$-formula.
\end{definition}

Given a BDRG $\mf{A}$ and an assingnment $\mu$ in $A$, for any structure $\Gamma$, define $\mu(\Gamma) = \mu(\tau(\Gamma))$.
We say that $\Gamma\vdash\phi$ is {\em valid} in $\mf{A}$ if $\mu(\Gamma)\leq\mu(\phi)$ for every assignment in $\mf{A}$. We use the notation $\mathbb{BDRG}\models\Gamma\vdash\phi$ to denote that $\Gamma\vdash\phi$ is valid in every BDRG. Obviously $\mathbb{BDRG}\models\Gamma\vdash\phi$ iff $\mathbb{BDRG}\models \tau(\Gamma)\vdash\phi$.

A {\em context} is a structure $\Gamma[-]$ with a single hole $-$ for a structure. Formally, contexts are defined inductively by the following rule:
\[
\Gamma[-] ::= [-] \mid \Gamma[-] \odot \Delta\mid \Delta \odot \Gamma[-]\mid \Gamma[-] \owedge \Delta\mid \Delta \owedge \Gamma[-],
\]
where $\Delta$ is a structure. For any context $\Gamma[-]$ and structure $\Delta$, let $\Gamma[\Delta]$ be the structure obtained from $\Gamma[-]$ by substituting $\Delta$ for the hole $-$. For a context $\Gamma[-]$, let $\tau(\Gamma[-])$ be the formula which contains a hole. In particular, let $\tau([-]) = -$.

\begin{definition}
The sequent calculus $\msf{G_{BDFNL}}$ consists of the following axioms and rules:
\begin{itemize}
\item Axioms:
\[
(\mathrm{Id}) ~ \phi\vdash \phi,
\quad
(\top)~ \Gamma\vdash \top,
\quad
(\bot)~ \Gamma[\bot]\vdash \phi,
\]
\item Logical rules:
\[
\frac{\Delta \vdash \phi \quad \Gamma[\psi] \vdash \gamma}{\Gamma[\Delta \odot (\phi \imp \psi)] \vdash \gamma}(\imp\ \vdash),
\quad
\frac{\phi \odot \Gamma \vdash \psi}{\Gamma \vdash \phi \imp \psi}(\vdash\ \imp),
\]
\[
\frac{\Gamma[\phi] \vdash \gamma\quad \Delta \vdash \psi}{\Gamma[(\phi\leftarrow \psi) \odot \Delta] \vdash \gamma}(\leftarrow\ \vdash),
\quad
\frac{\Gamma \odot\psi \vdash \phi}{\Gamma \vdash \phi\leftarrow \psi}(\vdash\ \leftarrow),
\]
\[
\frac{\Gamma[\phi \odot \psi] \vdash \gamma}{\Gamma[\phi \bu \psi] \vdash \gamma}(\bu\vdash),
~~
\frac{\Gamma \vdash \phi \quad \Delta \vdash \psi}{\Gamma \odot \Delta \vdash \phi \bu \psi}(\vdash\bu),
\]
\[
\frac{\Gamma[\phi\owedge \psi]\vdash \gamma}{\Gamma[\phi\wedge \psi]\vdash \gamma}(\wedge\vdash),
~~
\frac{\Gamma\vdash \phi\quad \Delta \vdash \psi}{\Gamma \owedge \Delta \vdash \phi\wedge \psi}(\vdash\wedge),
\]
\[
\frac{\Gamma[\phi]\vdash \gamma
\quad
\Gamma[\psi]\vdash \gamma}{\Gamma[\phi\vee \psi] \vdash \gamma}(\vee\vdash),
\quad
\frac{\Gamma \vdash \phi_i}{\Gamma\vdash \phi_1\vee \phi_2}(\vdash\vee)(i=1,2),
\]
\item Structural rules:
\[
\frac{\Gamma[\Delta\owedge \Delta]\vdash \phi}{\Gamma [\Delta]\vdash \phi}(\mathrm{\owedge C}),
\quad
\frac{\Gamma[\Delta]\vdash \phi}{\Gamma[\Sigma\owedge\Delta]\vdash \phi}(\mathrm{\owedge W}),
\]
\[
\frac{\Gamma[\Delta\owedge \Lambda]\vdash \phi}{\Gamma[ \Lambda\owedge \Delta]\vdash \phi}(\mathrm{\owedge E}),
\quad
\frac{\Gamma[(\Delta_1\owedge \Delta_2)\owedge \Delta_3] \vdash \phi}{\Gamma[\Delta_1\owedge (\Delta_2\owedge \Delta_3)] \vdash \phi}(\mathrm{\owedge As}).
\]
\end{itemize}
\end{definition}

A derivation in $\msf{G_{BDFNL}}$ is an instance of an axiom or a tree of applications of logical or structural rules. The height of a derivation if the greatest number of successive applications of rules in it, where an axiom has height $0$. A formula with the connective in a logical rule is called the {\em principal} formula of that rule.
A sequent $\Gamma\vdash\phi$ is derivable in $\msf{G_{BDFNL}}$ if there is a derivation ending with $\Gamma\vdash\phi$ in $\msf{G_{BDFNL}}$. A rule of sequents is derivable in $\msf{G_{BDFNL}}$ if the conclusion is derivable whenever the premisses are derivable in $\msf{G_{BDFNL}}$.

\begin{fact}
The following structural rules are derivable in $\msf{G_{BDFNL}}$:
\[
(\mathrm{\owedge W'}) ~\frac{\Gamma[\Delta]\vdash \phi}{\Gamma[\Delta\owedge\Sigma]\vdash \phi},
\quad
(\mathrm{\owedge As'}) ~ \frac{\Gamma[\Delta_1\owedge( \Delta_2\owedge \Delta_3)] \vdash \phi}{\Gamma[(\Delta_1\owedge \Delta_2)\owedge \Delta_3] \vdash \phi}.
\]
\end{fact}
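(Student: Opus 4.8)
The plan is to obtain both rules as short derived rules from the four primitive structural rules for $\owedge$ already present in $\msf{G_{BDFNL}}$, namely $(\mrm{\owedge C})$, $(\mrm{\owedge W})$, $(\mrm{\owedge E})$ and $(\mrm{\owedge As})$. Since each of these is stated relative to an arbitrary context $\Gamma[-]$, every step below is applied with that same ambient context, rewriting only the displayed sub-structure; in particular no logical rules and no cut intervene, so the resulting derivations live entirely inside $\msf{G_{BDFNL}}$.

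First I would dispatch $(\mrm{\owedge W'})$, which is immediate: from a derivation of $\Gamma[\Delta]\vdash\phi$, one application of $(\mrm{\owedge W})$ gives $\Gamma[\Sigma\owedge\Delta]\vdash\phi$, and then one application of $(\mrm{\owedge E})$ to the sub-structure $\Sigma\owedge\Delta$ gives $\Gamma[\Delta\owedge\Sigma]\vdash\phi$, as required.

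The content is in $(\mrm{\owedge As'})$. The point is that the primitive rule $(\mrm{\owedge As})$ only rebrackets a left-nested structure $(\Delta_1\owedge\Delta_2)\owedge\Delta_3$ into the right-nested $\Delta_1\owedge(\Delta_2\owedge\Delta_3)$, whereas $(\mrm{\owedge As'})$ asks for the converse rebracketing; one recovers this by sandwiching $(\mrm{\owedge As})$ between applications of $(\mrm{\owedge E})$. Writing $a,b,c$ for $\Delta_1,\Delta_2,\Delta_3$, I would transform the sub-structure $a\owedge(b\owedge c)$ into $(a\owedge b)\owedge c$ along the chain: commute the outer $\owedge$ to get $(b\owedge c)\owedge a$; apply $(\mrm{\owedge As})$ (now legitimate, the structure being left-nested) to get $b\owedge(c\owedge a)$; commute the outer $\owedge$ to get $(c\owedge a)\owedge b$; apply $(\mrm{\owedge As})$ again to get $c\owedge(a\owedge b)$; and finally commute the outer $\owedge$ to get $(a\owedge b)\owedge c$. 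This is five structural-rule steps, each an instance of $(\mrm{\owedge E})$ or $(\mrm{\owedge As})$ with ambient context $\Gamma[-]$.

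The only thing to watch is the bookkeeping of bracketings: at each step one must check that the sub-structure being rewritten really matches the premise pattern of the primitive rule applied — in particular that the two uses of $(\mrm{\owedge As})$ are applied to genuinely left-nested structures, and that $(\mrm{\owedge E})$, used three times, is each time a correct instance. I do not expect any genuine obstacle here; the argument is purely a combinatorial rearrangement of brackets, and the only mildly non-obvious ingredient is spotting the particular five-step chain (rather than, e.g., a variant mixing inner and outer exchanges) that threads through the one-directional associativity rule.
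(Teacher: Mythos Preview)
Your proof is correct. The paper states this Fact without proof, so there is nothing to compare against; your derivations are the natural ones. For $(\owedge\mathrm{W'})$ the two-step route via $(\owedge\mathrm{W})$ then $(\owedge\mathrm{E})$ is exactly right, and for $(\owedge\mathrm{As'})$ your five-step chain alternating $(\owedge\mathrm{E})$ and $(\owedge\mathrm{As})$ is valid at each step (both uses of $(\owedge\mathrm{As})$ are applied to genuinely left-nested premises, as you note).
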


We will now prove the admissibility of cut rule in $\msf{G_{BDFNL}}$. The standard cut rule for a `deep inference' system using contexts with a hole is the following:
\[
\AxiomC{$\Delta\vdash\phi$\quad$\Gamma[\phi]\vdash\psi$}
\RightLabel{\footnotesize $(\mrm{cut})$}
\UnaryInfC{$\Gamma[\Delta]\vdash \psi$}
\DisplayProof
\]
Consider the cut in which the right premiss is obtained by $(\owedge\mrm{C})$ and the left premiss is an axiom $(\top)$:
\[
\AxiomC{$\Delta\vdash\top$}
\AxiomC{$\Gamma[\top\owedge\top]\vdash\psi$}
\RightLabel{\footnotesize $(\owedge\mrm{C})$}
\UnaryInfC{$\Gamma[\top]\vdash\psi$}
\RightLabel{\footnotesize $(\mrm{cut})$}
\BinaryInfC{$\Gamma[\Delta]\vdash \psi$}
\DisplayProof
\]
To eliminate the cut here, one need to cut simultaneously the two occurrences of $\top$ in the premiss of $(\owedge\mrm{C})$. Then we will consider Gentzen's multi-cut or mix rule of which the cut rule is a special case. We use multiple-hole contexts of the form $\Gamma[-]\ldots[-]$ to formulate the mix rule.

\begin{theorem}\label{thm:cut}
The mix rule 
\begin{displaymath}
\frac{\Delta \vdash \phi \quad \Gamma[\phi]\ldots[\phi] \vdash \psi}{\Gamma[\Delta]\ldots[\Delta]\vdash \psi}(\mathrm{mix})
\end{displaymath}
is admissible in $\msf{G_{BDFNL}}$.
\end{theorem}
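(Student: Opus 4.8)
The plan is to prove cut-elimination by the standard Gentzen double induction, adapted to the deep-inference/multiple-hole setting forced by the contraction rule $(\owedge\mathrm{C})$. I would argue by induction on the pair $(|\phi|, h)$ ordered lexicographically, where $|\phi|$ is the complexity of the cut formula (the \emph{mix formula}) and $h$ is the sum of the heights of the derivations of the two premises $\Delta\vdash\phi$ and $\Gamma[\phi]\dots[\phi]\vdash\psi$. The use of the mix rule rather than cut is essential precisely because $(\owedge\mathrm{C})$ can duplicate an occurrence of $\phi$, so after permuting the mix upward past $(\owedge\mathrm{C})$ we must be prepared to discharge two (or, after further contractions, arbitrarily many) copies of $\phi$ simultaneously — hence the multiple-hole notation $\Gamma[-]\dots[-]$, and hence the convention that $\Gamma[\Delta]\dots[\Delta]$ means \emph{every} marked hole is filled with $\Delta$.

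The proof then splits into the usual case analysis on the last rules applied in the two premises. First I would handle the cases where the mix formula is not principal in the right premise: here the last rule of the right derivation is permuted below the mix, the mix is applied to the (shorter) premises of that rule, and one invokes the induction hypothesis on smaller $h$. Some care is needed for $(\owedge\mathrm{C})$, $(\owedge\mathrm{W})$, $(\owedge\mathrm{E})$, $(\owedge\mathrm{As})$ and for the logical rules with a context $\Gamma[-]$, since a hole marked for the mix formula may sit inside the active structure of the rule; but because these rules act on structures and the holes are tracked syntactically, the permutation always goes through — contraction on a context containing $k$ marked holes simply yields a premise with $2k$ marked holes, and since $|\phi|$ is unchanged while $h$ strictly decreases, the IH applies. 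Symmetrically one handles the cases where the mix formula is not principal in the left premise (here the left derivation's last rule is permuted down; note the left premise has shape $\Delta\vdash\phi$ with $\phi$ necessarily the succedent formula, so the relevant subcases are the right-introduction rules and the structural rules acting inside $\Delta$, plus the axiom cases $(\mathrm{Id})$, $(\top)$, $(\bot)$ which are immediate). Second, the principal cases: when $\phi$ is principal on both sides, one replaces the mix on $\phi$ by mixes on the immediate subformulas of $\phi$ (for $\imp$, $\leftarrow$, $\bu$, $\wedge$, $\vee$), each of which has strictly smaller complexity, possibly preceded or followed by applications of the structural rules $(\owedge\mathrm{C})$, $(\owedge\mathrm{W})$ and their derived forms to manage the multiplicities of holes. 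The $(\top)$ axiom as left premise combined with a right premise containing holes inside a $\owedge$ is exactly the motivating example flagged before the theorem, and it is resolved by this multi-hole bookkeeping: $\Gamma[\top\owedge\top]\dots$ becomes $\Gamma[\Delta\owedge\Delta]\dots$ and then $\Gamma[\Delta]\dots$ via $(\owedge\mathrm{C})$.

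The main obstacle I anticipate is the interaction of the mix formula with the context-based logical rules, especially $(\imp\ \vdash)$ and $(\leftarrow\ \vdash)$, where the mix formula occurrence we are eliminating may be (i) the displayed principal formula $\phi\imp\psi$, (ii) buried inside the side structure $\Delta$, or (iii) buried inside the context $\Gamma[-]$ of that rule; and within the non-principal permutation cases, keeping the multiple-hole annotation coherent when a rule both consumes some holes and is wrapped in a context that contains others. These are bookkeeping-heavy but not conceptually deep: the key invariant is that every permutation either strictly decreases $h$ with $|\phi|$ fixed, or strictly decreases $|\phi|$, so the lexicographic induction terminates. I would present the principal $\bu$/$\imp$ cases in full and relegate the remaining routine permutations to the reader, remarking only that the structural rules $(\owedge\mathrm{W}')$ and $(\owedge\mathrm{As}')$ from the preceding Fact are exactly what is needed to rearrange duplicated copies of $\Delta$ in the principal $\wedge$ and $\vee$ cases. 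Once the mix rule is admissible, the ordinary cut rule — being the single-hole instance — is admissible a fortiori, and the subformula property of $\msf{G_{BDFNL}}$ follows by the usual inspection of the rules.
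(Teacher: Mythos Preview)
Your strategy is the paper's: Gentzen-style induction with mix to absorb contraction, case split by last rule, principal reductions on subformulas. There is, however, a genuine gap in your induction measure.

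Consider the case where the left premise is obtained by $(\vee\vdash)$ from $\Delta'[\chi]\vdash\phi$ and $\Delta'[\delta]\vdash\phi$. Mixing each of these into \emph{all} holes of the right premise gives $\Gamma[\Delta'[\chi]]\ldots[\Delta'[\chi]]\vdash\psi$ and $\Gamma[\Delta'[\delta]]\ldots[\Delta'[\delta]]\vdash\psi$; these differ at every hole, and $(\vee\vdash)$ merges only one position at a time, so you cannot recombine them directly. The paper handles this by filling holes one at a time: mix $\Delta'[\chi]\vdash\phi$ into hole $k$, do the same with $\Delta'[\delta]\vdash\phi$, apply $(\vee\vdash)$ at position $k$, then move to hole $k{+}1$. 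But after each step the right premise has been rebuilt by the induction hypothesis and its height is no longer bounded by $h_2$, so your sum $h = h_1+h_2$ need not decrease across the iteration. The paper instead inducts on $(|\phi|, h_1, h_2)$ lexicographically: every mix in the iteration has left-premise height $h_1-1<h_1$, so the IH applies regardless of what happens to $h_2$. Your stated invariant that ``every permutation strictly decreases $h$ with $|\phi|$ fixed'' fails exactly here, and the fix is this refinement of the order.

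A related slip: in your enumeration of non-principal subcases for the left premise you write ``right-introduction rules and the structural rules acting inside $\Delta$''. Right-introduction rules make $\phi$ principal, so you presumably meant \emph{left}-introduction rules --- and $(\vee\vdash)$ is the one among them that causes the trouble above.
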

\begin{proof}
We prove (mix) by simultaneous induction on (i) the complexity of the mixed formula $\phi$; (ii) the height of the derivation of $\Delta\vdash \phi$; (iii) the height of the derivation of $\Gamma[\phi]\vdash \psi$. Assume that $\Delta\vdash \phi$ is obtained by $R_1$, and $\Gamma[\phi]\vdash \psi$ by $R_2$. We have four cases:

\textbf{(I) At least one of $R_1$ and $R_2$ is an axiom}. 
We have two cases:

Case 1. Both $R_1$ and $R_2$ are axioms. We have the following subcases:

(1.1) $R_1 = (\bot)$ or $R_2 = (\top)$. The conclusion of (mix) is an instance of $(\bot)$ or $(\top)$.

(1.2) $R_1 = (\mrm{Id})$. Then $\Delta = \phi$. The conclusion of (mix) is obtained by $R_2$.

(1.3) $R_1 = (\top)$, $R_2 = (\mrm{Id})$. Then $\phi = \top = \psi$. The conclusion of (mix) is obtained by $(\top)$.

(1.4) $R_1 = (\top)$, $R_2 = (\bot)$. Then $\phi = \top$, and $\bot$ occurs in $\Gamma[\top]\ldots[\top]$. The conclusion of (mix) is obtained by $(\bot)$.

Case 2. Exactly one of $R_1$ and $R_2$ is an axiom. We have the following subcases:

(2.1) $R_1 = (\mrm{Id})$. Then the conclusion is the same as the right premiss of (mix).

(2.2) $R_1 = (\bot)$. Then the conclusion of (mix) is an axiom.

(2.3) $R_1 = (\top)$. Then $\phi = \top$. We have subcases according to $R_2$. If $R_2$ is a right rule of a logical connective. We first apply (mix) to $\Delta\vdash\top$ and the premiss(es) of $R_2$, and then apply the rule $R_2$. If $R_2$ is a left rule of a logical connective, the proof is similar to Case 6.
If $R_2$ is a structural rule, the proof is similar to Case 4.

(2.4) $R_2 = (\mrm{Id})$. The conclusion of (mix) is the same as the left premiss of (mix).

(2.5) $R_2 =(\top)$. The conclusion of (mix) is an axiom.

(2.6) $R_2 = (\bot)$. If $\phi\neq\bot$, then the conclusion of (mix) is an instance of $(\bot)$. Suppose $\phi = \bot$. We have subcases according to $R_1$. Clearly $R_1$ cannot be a right rule of a logical connective. If $R_1$ is a left rule of a logical cognitive, the proof is similar to Case 5.
If $R_1$ is a structural rule, the proof is similar to Case 3.

\textbf{(II) At least one of $R_1$ and $R_2$ is a structural rule}. We have two cases:

Case 3. $R_1$ is a structural rule. By induction (ii), the (mix) can be push up to the premiss of $R_1$ and then apply $R_1$. For example, let $R_1=(\owedge\mrm{C})$. The derivation
\[
\AxiomC{$\Delta'[\Sigma\owedge\Sigma] \vdash \phi$}
\RightLabel{\footnotesize $(\owedge\mrm{C})$}
\UnaryInfC{$\Delta'[\Sigma] \vdash \phi$}
\AxiomC{$\Gamma[\phi]\ldots[\phi]\vdash \psi$}
\RightLabel{\footnotesize $(\mrm{mix})$}
\BinaryInfC{$\Gamma[\Delta'[\Sigma]]\ldots\Gamma[\Delta'[\Sigma]]\vdash \psi$}
\DisplayProof
\]
is transformed into
\[
\AxiomC{$\Delta'[\Sigma\owedge\Sigma] \vdash \phi$\quad $\Gamma[\phi]\ldots[\phi]\vdash \psi$}
\RightLabel{\footnotesize $(\mrm{mix})$}
\UnaryInfC{$\Gamma[\Delta'[\Sigma\owedge\Sigma]]\ldots[\Delta'[\Sigma\owedge\Sigma]]\vdash \psi$}
\RightLabel{\footnotesize $(\owedge\mrm{C}^*)$}
\UnaryInfC{$\Gamma[\Delta'[\Sigma]]\ldots\Gamma[\Delta'[\Sigma]]\vdash \psi$}
\DisplayProof
\]
where $(\owedge\mrm{C}^*)$ stands for the application of $(\owedge\mrm{C})$ multiple times.

Case 4. $R_2$ is a structural rule. Suppose that $\phi$ is obtained by $(\owedge\mrm{W})$ in $R_2$. The derivation
\[
\AxiomC{$\Delta \vdash \phi$}
\AxiomC{$\Gamma[\phi]\ldots[\Delta']\ldots[\phi]\vdash \psi$}
\RightLabel{\footnotesize $(\owedge\mrm{W})$}
\UnaryInfC{$\Gamma[\phi]\ldots[\Sigma[\phi]\owedge\Delta']\ldots[\phi]\vdash \psi$}
\RightLabel{\footnotesize $(\mrm{mix})$}
\BinaryInfC{$\Gamma[\Delta]\ldots[\Sigma[\Delta]\owedge\Delta']\ldots[\Delta]\vdash \psi$}
\DisplayProof
\]
is transformed into
\[
\AxiomC{$\Delta \vdash \phi$\quad$\Gamma[\phi]\ldots[\Delta']\ldots[\phi]\vdash \psi$}
\RightLabel{\footnotesize $(\mrm{mix})$}
\UnaryInfC{$\Gamma[\Delta]\ldots[\Delta']\ldots[\Delta]\vdash \psi$}
\RightLabel{\footnotesize $(\owedge\mrm{W})$}
\UnaryInfC{$\Gamma[\Delta]\ldots[\Sigma[\Delta]\owedge\Delta']\ldots[\Delta]\vdash \psi$}
\DisplayProof
\]
For the remaining cases of $R_2$, by induction (ii), the (mix) can be push up to the premiss of $R_2$ and then apply $R_2$.

\textbf{(III) At least one of $R_1$ and $R_2$ is a logical rule, but the mixed formula is not principal}. We have two cases:

Case 5. The mixed formula $\phi$ is not principal in the left premiss. Then we have subcases according to $R_1$. Clearly $R_1$ cannot be a right rule of a logical connective. Assume $R_1 = (\imp\ \vdash)$. The derivation ends with
\[
\AxiomC{$\Delta'\vdash\chi$\quad$\Delta[\delta]\vdash\phi$}
\RightLabel{\footnotesize $(\imp\ \vdash)$}
\UnaryInfC{$\Delta[\Delta'\odot(\chi\imp\delta)]\vdash\phi$}
\AxiomC{$\Gamma[\phi]\ldots[\phi]\vdash\psi$}
\RightLabel{\footnotesize $(\mrm{mix})$}
\BinaryInfC{$\Gamma[\Delta[\Delta'\odot(\chi\imp\delta)]]\ldots[\Delta[\Delta'\odot(\chi\imp\delta)]]\vdash\psi$}
\DisplayProof
\]
Firstly we push up (mix) as below:
\[
\AxiomC{$\Delta[\delta]\vdash\phi$\quad$\Gamma[\phi]\ldots[\phi]\vdash\psi$}
\RightLabel{\footnotesize $(\mrm{mix})$}
\UnaryInfC{$\Gamma[\Delta[\delta]]\ldots[\Delta[\delta]]\vdash\psi$}
\DisplayProof
\]
Then we apply $(\imp\ \vdash)$ to $\Delta'\vdash\chi$ and 
$\Gamma[\Delta[\delta]]\ldots[\Delta[\delta]]\vdash\psi$ multiple times, and we get the conclusion. 

Assume $R_1 = (\vee\vdash)$. The derivation ends with
\[
\AxiomC{$\Delta[\chi]\vdash\phi$\quad$\Delta[\delta]\vdash\phi$}
\RightLabel{\footnotesize $(\vee\vdash)$}
\UnaryInfC{$\Delta[\chi\vee\delta]\vdash\phi$}
\AxiomC{$\Gamma[\phi]\ldots[\phi]\vdash\psi$}
\RightLabel{\footnotesize $(\mrm{mix})$}
\BinaryInfC{$\Gamma[\Delta[\chi\vee\delta]]\ldots[\Delta[\chi\vee\delta]]\vdash\psi$}
\DisplayProof
\]
The rule (mix) is push up to sequents with less height of derivation in multiple steps. For the first occurrence of $\phi$ in $\Gamma[\phi]\ldots[\phi]\vdash\psi$, mix it with $\Delta[\chi]\vdash\phi$ and $\Delta[\delta]\vdash\phi$ respectively, and by $(\vee\vdash)$ one gets $\Gamma[\Delta[\chi\vee\delta]][\phi]\ldots[\phi]\vdash\psi$. Repeat this process multiple times and we achieve the conclusion $\Gamma[\Delta[\chi\vee\delta]]\ldots[\Delta[\chi\vee\delta]]\vdash\psi$.

The remaining cases $R_1 = (\limp\ \vdash)$, $(\bu\vdash)$ or $(\wedge\vdash)$ are similar.

Case 6. The mixed formula $\phi$ is principal only in the left premiss. Then we have subcases according to $R_2$. Assume $R_2 = (\imp\ \vdash)$. If $\phi$ does not occur in $\Sigma$, then the derivation
\[
\AxiomC{$\Delta \vdash \phi$}
\AxiomC{$\Sigma\vdash\chi$\quad$\Gamma'[\delta][\phi]\ldots[\phi]\vdash\psi$}
\RightLabel{\footnotesize $(\imp\ \vdash)$}
\UnaryInfC{$\Gamma'[\Sigma\odot(\chi\imp\delta)][\phi]\ldots[\phi]\vdash \psi$}
\RightLabel{\footnotesize $(\mrm{mix})$}
\BinaryInfC{$\Gamma'[\Sigma\odot(\chi\imp\delta)][\Delta]\ldots[\Delta]\vdash \psi$}
\DisplayProof
\]
is transformed into
\[
\AxiomC{$\Sigma\vdash\chi$}
\AxiomC{$\Delta \vdash \phi$\quad$\Gamma'[\delta][\phi]\ldots[\phi]\vdash\psi$}
\RightLabel{\footnotesize $(\mrm{mix})$}
\UnaryInfC{$\Gamma'[\delta ][\Delta]\ldots[\Delta]\vdash \psi$}
\RightLabel{\footnotesize $(\imp\ \vdash)$}
\BinaryInfC{$\Gamma'[\Sigma\odot(\chi\imp\delta)][\Delta]\ldots[\Delta]\vdash \psi$}
\DisplayProof
\]
Suppose that $\Sigma = \Sigma'[\phi]$. The derivation
 \[
\AxiomC{$\Delta \vdash \phi$}
\AxiomC{$\Sigma'[\phi]\vdash\chi$\quad$\Gamma'[\delta][\phi]\ldots[\phi]\vdash\psi$}
\RightLabel{\footnotesize $(\imp\ \vdash)$}
\UnaryInfC{$\Gamma'[\Sigma'[\phi]\odot(\chi\imp\delta)][\phi]\ldots[\phi]\vdash \psi$}
\RightLabel{\footnotesize $(\mrm{mix})$}
\BinaryInfC{$\Gamma'[\Sigma'[\Delta]\odot(\chi\imp\delta)][\Delta]\ldots[\Delta]\vdash \psi$}
\DisplayProof
\]
is transformed into
\[
\AxiomC{$\Delta\vdash\phi$\quad$\Sigma'[\phi]\vdash\chi$}
\RightLabel{\footnotesize $(\mrm{mix})$}
\UnaryInfC{$\Sigma'[\Delta]\vdash\chi$}
\AxiomC{$\Delta \vdash \phi$\quad$\Gamma'[\delta][\phi]\ldots[\phi]\vdash\psi$}
\RightLabel{\footnotesize $(\mrm{mix})$}
\UnaryInfC{$\Gamma'[\delta ][\Delta]\ldots[\Delta]\vdash \psi$}
\RightLabel{\footnotesize $(\imp\ \vdash)$}
\BinaryInfC{$\Gamma'[\Sigma'[\Delta]\odot(\chi\imp\delta)][\Delta]\ldots[\Delta]\vdash \psi$}\DisplayProof
\]
The remaining cases $R_2 = (\limp\ \vdash)$, $(\bu\vdash)$, $(\wedge\vdash)$, or $(\vee\vdash)$ are similar.

\textbf{(IV) Both $R_1$ and $R_2$ are logical rules, and the mixed formula is principal}. Then we prove it by induction on the complexity of $\phi$.
Assume that $\phi=\phi_1\bu \phi_2$.  The derivation
\[
\AxiomC{$\Delta_1\vdash \phi_1$\quad $\Delta_2\vdash \phi_2$}
\RightLabel{\footnotesize $(\vdash\bu)$}
\UnaryInfC{$\Delta_1\odot\Delta_2\vdash \phi$}
\AxiomC{$\Gamma[\phi]\ldots[\phi_1\odot \phi_2]\ldots[\phi]\vdash \psi$}
\RightLabel{\footnotesize $(\bu\vdash)$}
\UnaryInfC{$\Gamma[\phi]\ldots[\phi_1\bu \phi_2]\ldots[\phi]\vdash \psi$}
\RightLabel{\footnotesize $(\mrm{mix})$}
\BinaryInfC{$\Gamma[\Delta_1\odot \Delta_2]\vdash \psi$}
\DisplayProof
\]
is transformed into
\[
\AxiomC{$\Delta_1\vdash\phi_1$}
\AxiomC{$\Delta_2\vdash\phi_2$}
\AxiomC{$\Delta_1\odot\Delta_2\vdash \phi$\quad$\Gamma[\phi]\ldots[\phi_1\odot \phi_2]\ldots[\phi]\vdash \psi$}
\RightLabel{\footnotesize $(\mrm{mix})$}
\UnaryInfC{$\Gamma[\Delta_1\odot \Delta_2]\ldots[\phi_1\odot \phi_2]\ldots[\Delta_1\odot \Delta_2]\vdash \psi$}
\RightLabel{\footnotesize $(\mrm{mix})$}
\BinaryInfC{$\Gamma[\Delta_1\odot \Delta_2]\ldots[\phi_1\odot \Delta_2]\ldots[\Delta_1\odot \Delta_2]\vdash \psi$}
\RightLabel{\footnotesize $(\mrm{mix})$}
\BinaryInfC{$\Gamma[\Delta_1\odot \Delta_2]\ldots[\Delta_1\odot \Delta_2]\ldots[\Delta_1\odot \Delta_2]\vdash \psi$}
\DisplayProof
\]
Note that the (mix) rule is push up to sequents with lesser height in the derivation.
The remaining cases $\phi=\phi_1\imp \phi_2$, $\phi_1\leftarrow \phi_2$, $\phi_1\wedge \phi_2$, or $\phi_1\vee \phi_2$ are quite similar.
\qed
\end{proof}

In all rules of $\msf{G_{BDFNL}}$, no formula disappears in from the premiss(es) to the conclusion. Hence we get the subformula property of $\msf{G_{BDFNL}}$ immediately:

\begin{theorem}
If a consecution $\Gamma\vdash\phi$ has a derivation in $\msf{G_{BDFNL}}$, then all formulas in the derivation are subformulas of $\Gamma,\phi$.
\end{theorem}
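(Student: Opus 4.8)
The plan is to prove the statement by a straightforward induction on the height of a derivation $\mc{D}$ of $\Gamma\vdash\phi$ in $\msf{G_{BDFNL}}$, the crucial point being a local, rule-by-rule observation: for every rule of $\msf{G_{BDFNL}}$, every formula occurring in a premiss is a subformula of some formula occurring in the conclusion. First I would note that this is possible only because $\msf{G_{BDFNL}}$ has no primitive cut rule — the mix rule of Theorem \ref{thm:cut} is merely admissible — so every derivation is literally built from the listed axioms, logical rules, and structural rules, none of which makes a formula disappear downwards. (A cut-style rule would be precisely the offender, since the cut formula of the left premiss vanishes from the conclusion.)

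Next I would verify the local property rule by rule. For the logical rules, the principal formula introduced in the conclusion has, among its subformulas, exactly the new formulas appearing in the premisses: in $(\imp\ \vdash)$ and $(\vdash\ \imp)$ the formulas $\phi$ and $\psi$ of the premisses are subformulas of $\phi\imp\psi$; likewise for $\phi\bu\psi$ in $(\bu\vdash)$, for $\phi\wedge\psi$ in $(\wedge\vdash)$, for $\phi\vee\psi$ in $(\vee\vdash)$, for $\phi_1\vee\phi_2$ in $(\vdash\vee)$, and dually for the residual connective; and every remaining formula of each premiss occurs verbatim in the conclusion, sitting inside a side structure $\Gamma[-]$ or $\Delta$ or as the succedent $\gamma$. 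For the structural rules $(\owedge\mrm{C})$, $(\owedge\mrm{E})$, $(\owedge\mrm{As})$ the premiss and conclusion contain exactly the same formula-leaves, since these rules only contract, permute, or re-associate structural occurrences; and for $(\owedge\mrm{W})$ the conclusion contains all formula-leaves of the premiss together with those of the weakened-in structure $\Sigma$, so again each premiss formula occurs in the conclusion. The axioms $(\mrm{Id})$, $(\top)$, $(\bot)$ have no premisses, so there is nothing to check there.

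Then the induction closes: if $\Gamma\vdash\phi$ is the conclusion of a rule with premisses $S_1,\dots,S_k$ and immediate subderivations $\mc{D}_1,\dots,\mc{D}_k$, then by the induction hypothesis every formula occurring in $\mc{D}_i$ is a subformula of a formula occurring in $S_i$, and by the local property above every formula of $S_i$ is a subformula of a formula of $\Gamma\vdash\phi$; transitivity of the subformula relation then gives that every formula of $\mc{D}_i$ is a subformula of a formula of $\Gamma\vdash\phi$. Since the only formulas of $\mc{D}$ not already lying in some $\mc{D}_i$ are those of the endsequent $\Gamma\vdash\phi$ itself, and the base cases $(\mrm{Id})$, $(\top)$, $(\bot)$ are immediate, we conclude. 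I do not expect any genuine obstacle here; the only points deserving a moment's care are reading the weakening rule $(\owedge\mrm{W})$ in the correct direction (conclusion richer than premiss) and confirming, for each logical rule, that the "new" premiss formulas are genuinely subformulas of — not merely related to — the principal formula of the conclusion.
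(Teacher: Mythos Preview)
Your proposal is correct and follows exactly the same approach as the paper: the paper's entire argument is the one-line remark preceding the theorem, namely that ``in all rules of $\msf{G_{BDFNL}}$, no formula disappears from the premiss(es) to the conclusion,'' from which the subformula property follows immediately. Your write-up simply spells out this observation rule by rule and packages it as an explicit induction on derivation height, which is a faithful and more detailed rendering of the paper's one-sentence justification.
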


Now we will prove the completeness of $\msf{G_{BDFNL}}$ with respect to $\mathbb{BDRG}$. Firstly, we have the following lemma on the invertibility of some rules in $\msf{G_{BDFNL}}$:

\begin{lemma}\label{lemma:inv}
The following rules are admissible in $\msf{G_{BDFNL}}$:
\[
\frac{\Gamma\vdash\phi\imp\psi}{\phi\odot\Gamma\vdash\psi}(\vdash\imp\ \ua),
\quad
\frac{\Gamma[\phi\bu\psi]\ldots[\phi\bu\psi]\vdash\gamma}{\Gamma[\phi\odot\psi]\ldots[\phi\odot\psi]\vdash\gamma}(\bu\vdash\ \ua),
\]
\[
\frac{\Gamma \vdash \phi\leftarrow \psi}{\Gamma \odot\psi \vdash \phi}(\vdash\leftarrow\ \ua),
\quad
\frac{\Gamma[\phi\wedge\psi]\ldots[\phi\wedge\psi]\vdash\gamma}{\Gamma[\phi\owedge\psi]\ldots[\phi\owedge\psi]\vdash\gamma}(\owedge\vdash\ \ua).
\]
\end{lemma}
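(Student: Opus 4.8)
The plan is to derive each of the four rules of Lemma~\ref{lemma:inv} from the admissibility of the mix rule (Theorem~\ref{thm:cut}), using in each case one easy ``characteristic'' consecution as the second premiss. Since (mix) (and in particular ordinary cut) is already available, it suffices to exhibit, for each rule, a consecution derivable in $\msf{G_{BDFNL}}$ from instances of $(\mathrm{Id})$ by a single logical rule, and then to perform one application of (mix); no induction specific to this lemma is needed.

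Concretely, for $(\vdash\imp\ \ua)$ I would first note that $\phi\odot(\phi\imp\psi)\vdash\psi$ is derivable by one application of $(\imp\ \vdash)$ to the axioms $\phi\vdash\phi$ and $\psi\vdash\psi$ (empty context, succedent $\psi$). Then, given a derivation of $\Gamma\vdash\phi\imp\psi$, applying (mix) with mixed formula $\phi\imp\psi$, left premiss $\Gamma\vdash\phi\imp\psi$, right premiss $\phi\odot(\phi\imp\psi)\vdash\psi$, and context $\phi\odot[-]$ yields $\phi\odot\Gamma\vdash\psi$. The rule $(\vdash\leftarrow\ \ua)$ is entirely symmetric: $(\phi\leftarrow\psi)\odot\psi\vdash\phi$ is derivable by $(\leftarrow\ \vdash)$ from $\phi\vdash\phi$ and $\psi\vdash\psi$, and one cuts $\Gamma\vdash\phi\leftarrow\psi$ into the context $[-]\odot\psi$.

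For $(\bu\vdash\ \ua)$ and $(\owedge\vdash\ \ua)$ the displayed occurrences are formulas sitting inside a possibly iterated structural context, so here I would use the full (multiple-hole) strength of (mix). The consecution $\phi\odot\psi\vdash\phi\bu\psi$ is derivable by one application of $(\vdash\bu)$ to $\phi\vdash\phi$ and $\psi\vdash\psi$; applying (mix) with mixed formula $\phi\bu\psi$, left premiss $\phi\odot\psi\vdash\phi\bu\psi$ and right premiss $\Gamma[\phi\bu\psi]\ldots[\phi\bu\psi]\vdash\gamma$ produces exactly $\Gamma[\phi\odot\psi]\ldots[\phi\odot\psi]\vdash\gamma$. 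Likewise $(\owedge\vdash\ \ua)$ follows from the derivability of $\phi\owedge\psi\vdash\phi\wedge\psi$ (by $(\vdash\wedge)$ from $\phi\vdash\phi$ and $\psi\vdash\psi$) plus one application of (mix).

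There is no genuine obstacle here; the only points requiring a moment's care are matching the position of the mixed formula in the right premiss with the structural context into which the left premiss is substituted, and, for $(\bu\vdash\ \ua)$ and $(\owedge\vdash\ \ua)$, observing that the multiple-hole version of (mix) replaces all the displayed occurrences simultaneously — which is precisely how (mix) was formulated in Theorem~\ref{thm:cut}. (Should a cut-free argument be preferred, one could instead prove each rule directly by induction on the height of the derivation of the premiss, pushing the transformation upwards through the rules of $\msf{G_{BDFNL}}$; this is more laborious but avoids invoking cut.)
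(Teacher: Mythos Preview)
Your argument is correct, and it takes a genuinely different route from the paper. The paper proves each rule by induction on the height of the derivation of the premiss, case-splitting on the last rule applied (axiom, left rule, right rule, structural rule) and pushing the transformation upward through the derivation tree. You instead exploit Theorem~\ref{thm:cut}, which has already been established at this point, and reduce each admissibility claim to one application of (mix) against a one-step derivable characteristic consecution.

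Your approach is considerably shorter and conceptually cleaner: once (mix) is available, the invertibility of each introduction rule is almost automatic. The paper's inductive argument, by contrast, is self-contained in the sense that it does not rely on cut-admissibility, and in fact yields a slightly stronger conclusion (height-preserving admissibility), which matters in settings where one wishes to use such invertibility lemmas \emph{inside} a cut-elimination proof. Here, however, that extra strength is never exploited --- Lemma~\ref{lemma:inv} is only invoked in Lemma~\ref{lemma:gen1}, after (mix) is already in hand --- so your shortcut is entirely adequate for the paper's purposes. Your parenthetical remark acknowledging the direct inductive alternative is apt; that is exactly what the paper does.
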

\begin{proof}
The proof is done by induction on the height of the derivation of the premiss. Here we prove only the admissibility of $(\vdash\imp\ \ua)$ and $(\bu\vdash\ \ua)$. The remaining rules are shown similarly. Assume that the premiss is obtained by $R$.

For $(\vdash\imp\ \ua)$, if $R$ is an axiom, one can get $\phi\odot\Gamma\vdash$ easily. If $R$ is a left rule of a connective, or a rule for $\owedge$,  we push up $(\vdash\imp\ \ua)$ to the premiss(es) of $R$ and then apply the rule $R$. If $R$ is a right rule, it can only be $(\vdash\imp)$ and then one gets $\phi\odot\Gamma\vdash\psi$.

For $(\bu\vdash\ \ua)$, assume that $\Gamma\vdash\phi\imp\psi$ is obtained by $R$. We have the following cases:

Case 1. $R$ is an axiom. When $R$ is $(\bot)$ or $(\top)$, the conclusion is also $(\bot)$ or $(\top)$. Assume $R = (\mrm{Id})$.
The conclusion $\phi\odot\psi\vdash\phi\bu\psi$ can be derived by $(\vdash\bu)$ obviously.

Case 2. $R$ is a logical rule. If $R$ is a rule of $\imp$, $\limp$, $\wedge$, or $R$ is $(\vdash\bu)$, one can push up $(\vdash\imp\ \ua)$ to the premiss of $R$ and then apply the rule $R$. If $R=(\bu\vdash)$, one can push up $(\bu\vdash\ \ua)$ to the premiss of $R$ and obtain the conclusion directly.

Case 3. $R$ is a structural rule. Apply $(\bu\vdash\ \ua)$ to the premiss of $R$ and then apply $R$.
\qed
\end{proof}

\begin{lemma}\label{lemma:gen1}
If $\phi\vdash\psi$ is derivable in $\msf{BDFNL}$, then $\phi\vdash\psi$ is derivable in $\msf{G_{BDFNL}}$.
\end{lemma}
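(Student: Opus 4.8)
The plan is to argue by induction on the height of a derivation of $\phi\vdash\psi$ in $\msf{BDFNL}$, simulating each axiom and each rule of $\msf{BDFNL}$ by a fragment of a $\msf{G_{BDFNL}}$-derivation (a formula being read as a structure, so that the consecution $\phi\vdash\psi$ is well formed). In the base cases, the $\msf{BDFNL}$-axioms $\mrm{(Id)}$, $(\top)$, $(\bot)$ are immediate: $\phi\vdash\phi$ is the axiom $(\mrm{Id})$ of $\msf{G_{BDFNL}}$, $\phi\vdash\top$ is the axiom $(\top)$, and $\bot\vdash\phi$ is the axiom $(\bot)$ instantiated in the trivial context $[-]$. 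For the distributivity axiom $\mrm{(D)}$ I would give the short explicit derivation: from $\phi\vdash\phi$ and $\psi\vdash\psi$ one gets $\phi\owedge\psi\vdash\phi\wedge\psi$ by $(\vdash\wedge)$, hence $\phi\owedge\psi\vdash(\phi\wedge\psi)\vee(\phi\wedge\gamma)$ by $(\vdash\vee)$, and symmetrically $\phi\owedge\gamma\vdash(\phi\wedge\psi)\vee(\phi\wedge\gamma)$; then $(\vee\vdash)$ applied in the context $\phi\owedge[-]$ yields $\phi\owedge(\psi\vee\gamma)\vdash(\phi\wedge\psi)\vee(\phi\wedge\gamma)$, and a single application of $(\wedge\vdash)$ in the trivial context produces $\mrm{(D)}$. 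Note that this uses no structural rules and no cut.

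For the inductive step I would go through the rules of $\msf{BDFNL}$ one at a time, applying the induction hypothesis to the premisses. The rule $(\wedge\mrm{R})$ is simulated by feeding the two premiss derivations into $(\vdash\wedge)$ and contracting with $(\owedge\mrm{C})$; $(\wedge\mrm{L})$ by weakening the missing conjunct in via $(\owedge\mrm{W})$ (together with one use of $(\owedge\mrm{E})$ when it is the left conjunct that must be introduced) followed by $(\wedge\vdash)$; $(\vee\mrm{L})$ and $(\vee\mrm{R})$ are literally $(\vee\vdash)$ in the trivial context and $(\vdash\vee)$. The residuation rules $(\mrm{Res1})$--$(\mrm{Res4})$ are handled using the introduction rules $(\vdash\ \imp)$, $(\vdash\ \leftarrow)$, $(\bu\vdash)$ and their inverses $(\vdash\imp\,\ua)$, $(\vdash\leftarrow\,\ua)$, $(\bu\vdash\,\ua)$, which are available by Lemma \ref{lemma:inv}: for instance $(\mrm{Res1})$ passes from $\phi\bu\psi\vdash\gamma$ to $\phi\odot\psi\vdash\gamma$ by $(\bu\vdash\,\ua)$ and then to $\psi\vdash\phi\imp\gamma$ by $(\vdash\ \imp)$, while $(\mrm{Res2})$ reverses this (first $(\vdash\imp\,\ua)$, then $(\bu\vdash)$), and $(\mrm{Res3})$, $(\mrm{Res4})$ are the same arguments with $\leftarrow$ in place of $\imp$. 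Finally, the rule $(\mrm{cut})$ of $\msf{BDFNL}$ is exactly the single-hole instance of the mix rule, which is admissible in $\msf{G_{BDFNL}}$ by Theorem \ref{thm:cut}; applying it (with the trivial context on the right) to the $\msf{G_{BDFNL}}$-derivations of $\phi\vdash\psi$ and $\psi\vdash\gamma$ obtained from the induction hypothesis yields $\phi\vdash\gamma$.

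The only genuinely non-routine ingredients are Theorem \ref{thm:cut} and Lemma \ref{lemma:inv}; once those are in hand the rest is a mechanical case check, so I do not expect a serious obstacle. The point that needs care is the bookkeeping: one must invoke the invertibility rule $(\bu\vdash\,\ua)$ rather than $(\bu\vdash)$ when simulating $(\mrm{Res1})$ and $(\mrm{Res3})$, and conversely for $(\mrm{Res2})$ and $(\mrm{Res4})$, and one should keep track of contexts so that every application of $(\vee\vdash)$, $(\wedge\vdash)$ and of the structural rules uses a matching context (in each case the trivial context $[-]$ or the context $\phi\owedge[-]$ suffices). It is also worth remarking that cut is used exactly once in the whole translation, namely to mirror $\msf{BDFNL}$'s own $(\mrm{cut})$, so that all other $\msf{G_{BDFNL}}$-derivations produced by the simulation are cut-free; together with the subformula property of $\msf{G_{BDFNL}}$ this is what feeds into the intended cut-free presentation of $\mbf{S}_\mathbb{BDI}$.
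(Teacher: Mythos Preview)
Your proposal is correct and follows essentially the same route as the paper's proof: induction on the $\msf{BDFNL}$-derivation, with the axioms handled directly (including the same explicit derivation of $\mrm{(D)}$), the residuation rules simulated via the introduction rules together with the inverse rules of Lemma \ref{lemma:inv}, and $(\mrm{cut})$ handled by the admissible $(\mrm{mix})$ of Theorem \ref{thm:cut}. In fact you spell out the lattice cases $(\wedge\mrm{L})$, $(\wedge\mrm{R})$, $(\vee\mrm{L})$, $(\vee\mrm{R})$ in more detail than the paper, which simply asserts that these rules are derivable in $\msf{G_{BDFNL}}$.
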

\begin{proof}
By induction on the derivation of $\phi\vdash\psi$ in $\msf{BDFNL}$. 

Case 1. $\phi\vdash\psi$ is an axiom. The cases of (Id), ($\top$) and ($\bot$) are clear. For (D), one derivation is
\[
\AxiomC{$\phi\vdash\phi\quad\psi\vdash\psi$}
\RightLabel{\footnotesize $\mrm{(\vdash\wedge)}$}
\UnaryInfC{$\phi\owedge\psi\Imp \phi\wedge\psi$}
\RightLabel{\footnotesize $\mrm{(\vdash\vee)}$}
\UnaryInfC{$\phi\owedge\psi\Imp (\phi\wedge\psi)\vee(\phi\wedge\gamma)$}
\AxiomC{$\phi\vdash\phi\quad\gamma\vdash\gamma$}
\RightLabel{\footnotesize $\mrm{(\vdash\wedge)}$}
\UnaryInfC{$\phi\owedge\gamma\Imp \phi\wedge\gamma$}
\RightLabel{\footnotesize $\mrm{(\vdash\vee)}$}
\UnaryInfC{$\phi\owedge\gamma\Imp (\phi\wedge\psi)\vee(\phi\wedge\gamma)$}
\RightLabel{\footnotesize $\mrm{(\vee\vdash)}$}
\BinaryInfC{$\phi\owedge(\psi\vee\gamma)\Imp (\phi\wedge\psi)\vee(\phi\wedge\gamma)$}
\RightLabel{\footnotesize $\mrm{(\wedge\vdash)}$}
\UnaryInfC{$\phi\wedge(\psi\vee\gamma)\Imp (\phi\wedge\psi)\vee(\phi\wedge\gamma)$}
\DisplayProof
\]

Case 2. $\phi\vdash\psi$ is obtained by a rule. Obviously, rules for $\wedge$ and $\vee$ are derivable in $\msf{G_\msf{BDFNL}}$. The rule (cut) is a special case of (mix) in $\msf{G_\msf{BDFNL}}$.
For residuation rules, (Res1) is shown by the rule $(\bu\vdash\ \ua)$ in Lemma \ref{lemma:inv} and $(\bu\vdash)$. (Res2) is obtained by the rule $(\vdash\imp\ \ua)$ in Lemma \ref{lemma:inv} and $(\bu\vdash)$. The remaining residuation rules are shown similarly.
\qed
\end{proof}

\begin{lemma}\label{lemma:gen2}
If a consecution $\Gamma\vdash\phi$ is derivable in $\msf{G_\msf{BDFNL}}$, then $\tau(\Gamma)\vdash\phi$ is derivable in $\msf{BDFNL}$.
\end{lemma}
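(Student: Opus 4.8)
The plan is a straightforward induction on the derivation of $\Gamma\vdash\phi$ in $\msf{G_{BDFNL}}$, establishing at each step that $\tau(\Gamma)\vdash\phi$ is derivable in $\msf{BDFNL}$; since $(\mrm{cut})$ is a rule of $\msf{BDFNL}$ it may be used freely. Before starting the induction I would isolate a \emph{congruence lemma}: if $\alpha\vdash\beta$ is derivable in $\msf{BDFNL}$, then so is $\tau(\Theta[-])[\alpha]\vdash\tau(\Theta[-])[\beta]$ for every context $\Theta[-]$, the proof being an easy induction on $\Theta[-]$ using the monotonicity rules (1) and (2) of the Fact for $\bu$ together with the obvious derived monotonicity of $\wedge$ (from $(\wedge\mrm{L})$, $(\wedge\mrm{R})$, $(\mrm{cut})$). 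Since $\tau(\Gamma[\Delta]) = \tau(\Gamma[-])[\tau(\Delta)]$ by the clauses defining $\tau$ on contexts, this lemma lets me replace a subterm by a $\msf{BDFNL}$-provably weaker (or stronger) one inside any translated context. I would also record a short list of $\msf{BDFNL}$-theorems used repeatedly: $\bot\bu\chi\vdash\bot$ and $\chi\bu\bot\vdash\bot$ (from $(\bot)$ and $(\mrm{Res4})$, resp.\ $(\mrm{Res2})$, applied to $\bot\vdash\bot\limp\chi$ and $\bot\vdash\chi\imp\bot$); commutativity and associativity of $\wedge$; the distributivity laws $\chi\bu(\psi\vee\gamma)\vdash(\chi\bu\psi)\vee(\chi\bu\gamma)$ and its first-coordinate mirror (via $(\mrm{Res1})$/$(\mrm{Res3})$, $(\vee\mrm{L})$, $(\vee\mrm{R})$), together with $(\mrm{D})$ and its $\wedge$-mirror; and $\phi\bu(\phi\imp\psi)\vdash\psi$, $(\phi\limp\psi)\bu\psi\vdash\phi$ (from $(\mrm{Id})$ and $(\mrm{Res2})$/$(\mrm{Res4})$).

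With these in place, the inductive cases are mechanical. The axioms $(\mrm{Id})$ and $(\top)$ translate to instances of the homonymous $\msf{BDFNL}$ axioms. For $(\bot)$ I first show $\tau(\Gamma[\bot])\vdash\bot$ by an inner induction on the context $\Gamma[-]$ — base case $(\mrm{Id})$; inductive steps apply the congruence lemma together with $\bot\bu\chi\vdash\bot$, $\chi\bu\bot\vdash\bot$, and $\bot\wedge\chi\vdash\bot$ (from $(\mrm{Id})$ via $(\wedge\mrm{L})$) — and then cut against $(\bot)\colon\bot\vdash\phi$. The rules $(\bu\vdash)$ and $(\wedge\vdash)$ are trivial after translation, because $\tau(\Gamma[\phi\odot\psi])=\tau(\Gamma[\phi\bu\psi])$ and $\tau(\Gamma[\phi\owedge\psi])=\tau(\Gamma[\phi\wedge\psi])$.

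For the remaining logical rules the pattern is uniform: translate the premisses by the induction hypothesis; derive the relevant local inequality at the hole (for $(\imp\ \vdash)$ one combines $\tau(\Delta)\vdash\phi$ with rule (1) of the Fact and $\phi\bu(\phi\imp\psi)\vdash\psi$ to get $\tau(\Delta)\bu(\phi\imp\psi)\vdash\psi$; for $(\leftarrow\ \vdash)$ one uses $(\phi\limp\psi)\bu\psi\vdash\phi$ instead; $(\vdash\ \imp)$ and $(\vdash\ \leftarrow)$ are direct applications of $(\mrm{Res1})$, resp.\ $(\mrm{Res3})$, to the translated premiss; $(\vdash\bu)$ and $(\vdash\wedge)$ use two-step monotonicity of $\bu$, resp.\ $\wedge$; $(\vdash\vee)$ uses $(\vee\mrm{R})$); push this inequality through the surrounding context by the congruence lemma; and finish with $(\mrm{cut})$ against the translated premiss. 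The structural rules $(\owedge\mrm{C})$, $(\owedge\mrm{W})$, $(\owedge\mrm{E})$, $(\owedge\mrm{As})$ are handled the same way, the local witnesses being respectively $\chi\vdash\chi\wedge\chi$, $\chi_1\wedge\chi_2\vdash\chi_2$, commutativity of $\wedge$, and associativity of $\wedge$, inserted into the context by the congruence lemma and composed with the induction hypothesis by $(\mrm{cut})$.

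The case requiring the most care — and the only genuine obstacle — is $(\vee\vdash)$, since its conclusion introduces a $\vee$ \emph{inside} the structure. Here the induction hypothesis yields $\tau(\Gamma[\phi])\vdash\gamma$ and $\tau(\Gamma[\psi])\vdash\gamma$, hence $\tau(\Gamma[\phi])\vee\tau(\Gamma[\psi])\vdash\gamma$ by $(\vee\mrm{L})$, and it remains to derive $\tau(\Gamma[\phi\vee\psi])\vdash\tau(\Gamma[\phi])\vee\tau(\Gamma[\psi])$ and cut. I would prove the latter by induction on the context $\Gamma[-]$: the base case is $(\mrm{Id})$, and each inductive step first moves the disjunction outward using monotonicity of $\bu$ (resp.\ $\wedge$) at the non-hole argument, then distributes it using the distributivity law for $\bu$ over $\vee$ (resp.\ the axiom $(\mrm{D})$ and its mirror, with a commutativity step for $\wedge$), then reassembles the result by $(\vee\mrm{R})$-monotonicity — composing the finitely many intermediate sequents with $(\mrm{cut})$. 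Apart from this distributivity bookkeeping, and the analogous context induction hidden in the $(\bot)$ case, the whole argument is the routine ``translate premisses, prove a one-step inequality, propagate it through the context, cut'' recipe.
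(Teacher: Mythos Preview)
Your proof is correct and follows essentially the same strategy as the paper: induction on the derivation, with inner inductions on the context for the $(\bot)$ axiom, the $(\vee\vdash)$ rule, and the structural rules. Your upfront congruence lemma is a cleaner packaging of what the paper proves inline as a ``Claim'' in its treatment of $(\imp\ \vdash)$, and for $(\vee\vdash)$ the paper peels off context layers via residuation rather than your explicit distributivity of $\bu$ over $\vee$, but these are minor presentational variations of the same argument.
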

\begin{proof}
By induction on the height of the derivation of $\Gamma\vdash\phi$ in $\msf{G_\msf{BDFNL}}$.

Case 1. $\Gamma\vdash\phi$ is an axiom. The cases of (Id) and ($\top$) are obvious. 
We prove $\tau(\Gamma[\bot])\vdash\phi$ by induction on the construction of $\Gamma$. We have the following cases:

(1.1) $\Gamma = \psi$. Then $\Gamma[\bot] = \bot = \psi$. By $(\bot)$, we have $\bot\vdash\phi$. 

(1.2) $\Gamma = \Gamma'\odot\Delta$. Assume $\Gamma' = \Gamma'[\bot]$. By induction hypothesis, we have $\tau(\Gamma'[\bot])\vdash\phi\limp \tau(\Delta)$. Then by (Res4) in $\msf{BDFNL}$, one gets $\tau(\Gamma'[\bot])\bu \tau(\Delta)\vdash\phi$. Assume $\Delta=\Delta[\bot]$. By induction hypothesis, $\tau(\Delta[\bot])\vdash \tau(\Gamma)\imp\phi$. By (Res2), one gets $\tau(\Gamma)\bu \tau(\Delta[\bot])\vdash\phi$.

(1.3) $\Gamma = \Gamma'\owedge\Delta$.
Then $\tau(\Gamma[\bot]) = \tau(\Gamma'[\bot])\wedge \tau(\Delta)$ or $\tau(\Gamma[\bot]) = \tau(\Gamma')\wedge \tau(\Delta[\bot])$. By induction hypothesis, one can easily obtain $\tau(\Gamma)\vdash\phi$.

Case 2. $\Gamma\vdash\phi$ is obtained by $(\imp\ \vdash)$ or $(\limp\ \vdash)$. We prove the case of $(\imp\ \vdash)$ and the other one is similar. By inductive hypothesis, we have $\tau(\Delta)\vdash\chi$ and $\tau(\Sigma[\xi])\vdash\phi$. Our goal is to prove $\tau(\Sigma[\Delta\odot(\chi\imp\xi)])\vdash\phi$. Firstly, in $\msf{BDFNL}$,
from $\tau(\Delta)\vdash\chi$, one gets $\chi\imp\xi\vdash \tau(\Delta)\imp\xi$. Then by (Res2), one gets
$\tau(\Delta)\bu(\chi\imp\xi)\vdash\xi$. 

\textbf{Claim}. For any context $\Sigma[-]$, we have $\tau(\Sigma[\tau(\Delta)\bu(\chi\imp\xi)])\vdash \tau(\Sigma[\xi])$.

{\em Proof of Claim}. By induction on the construction of $\Sigma[-]$. The case $\Sigma[-] = [-]$ is obvious. Assume $\Sigma[-] = \Sigma'[-]\odot\Delta'$. Then $\tau(\Sigma[-]) = \tau(\Sigma'[-])\bu \tau(\Delta')$. By induction hypothesis, one gets $\tau(\Sigma'[\tau(\Delta)\bu(\chi\imp\xi)])\vdash \tau(\Sigma'[\xi])$. Then one gets
$\tau(\Sigma'[\tau(\Delta)\bu(\chi\imp\xi)])\bu \tau(\Delta')\vdash \tau(\Sigma'[\xi])\bu \tau(\Delta')$.
The remaining cases are similar. This completes the proof of the claim.

Now by applying (cut) to $\tau(\Sigma[\tau(\Delta)\bu(\chi\imp\xi)])\vdash \tau(\Sigma[\xi])$ and $\tau(\Sigma[\xi])\vdash\phi$, one gets $\tau(\Sigma[\Delta\odot(\chi\imp\xi)]\vdash\phi)$.

Case 3. $\Gamma\vdash\phi$ is obtained by $(\vdash\ \imp)$ or $(\vdash\ \limp)$. We prove the case of $(\vdash\ \imp)$ and the other one is similar. Let $\phi = \chi\imp\xi$. From the premiss $\chi\odot \Gamma\vdash\xi$ of $(\vdash\ \imp)$, by inductive hypothesis, one gets $\chi\bu \tau(\Gamma)\vdash\xi$. By (Res1), one gets $\tau(\Gamma)\vdash \chi\imp\xi$. 

Case 4. $\Gamma\vdash\phi$ is obtained by $(\bu\vdash)$.
By induction hypothesis, one gets $\tau(\Gamma[\chi\odot\xi])\vdash\phi$ is derivable in $\msf{BDFNL}$. Clearly, it is rather easy to check by induction on the construction of $\Gamma$ that $\tau(\Gamma[\chi\odot\xi]) = \tau(\Gamma[\chi\bu\xi])$. 
Therefore $\tau(\Gamma[\chi\bu\xi])\vdash\phi$ is derivable in $\msf{BDFNL}$.

Case 5. $\Gamma\vdash\phi$ is obtained by $(\vdash\bu)$. Let $\phi = \chi\bu\xi$. By induction hypothesis, one gets $\tau(\Gamma)\vdash\chi$ and $\tau(\Delta)\vdash\xi$. By the monotonicity rules of $\bu$, one gets $\tau(\Gamma)\bu\tau(\Delta)\vdash\chi\bu\xi$. 

Case 6. $\Gamma\vdash\phi$ is obtained by $(\wedge\vdash)$ or $(\vdash\wedge)$. The proof is similar to Case 4 or Case 5.

Case 7. $\Gamma\vdash\phi$ is obtained by $(\vee\vdash)$. By induction hypothesis, one gets $\tau(\Gamma[\chi])\vdash\phi$ and $\tau(\Gamma[\xi])\vdash\phi$. We prove $\tau(\Gamma[\chi\vee\xi])\vdash\phi$ by induction on the construction of $\Gamma$. 

(7.1) $\Gamma[-] = [-]$. Then we have $\chi\vdash\phi$ and $\xi\vdash\phi$. By $(\vee\mrm{L})$ in $\msf{BDFNL}$, one gets $\chi\vee\xi\vdash\phi$.

(7.2) $\Gamma[-] = \Gamma_1[-]\odot\Gamma_2$ or $\Gamma_1\odot\Gamma_2[-]$. The two cases are quite similar, and we specify only the first case. Clearly we have $\tau(\Gamma_1[\chi])\bu\tau(\Gamma_2)\vdash\phi$ and $\tau(\Gamma_1[\xi])\bu\tau(\Gamma_2)\vdash\phi$. By residuation rules, one gets $\tau(\Gamma_1[\chi])\vdash\phi\limp \tau(\Gamma_2)$ and $\tau(\Gamma_1[\xi]) \vdash\phi\limp \tau(\Gamma_2)$. By induction hypothesis on $\Gamma_1$, one gets $\tau(\Gamma_1[\chi\vee\xi])\vdash\phi\limp\tau(\Gamma_2)$. By residuation, one gets $\tau(\Gamma_1[\chi\vee\xi])\bu\tau(\Gamma_2)\vdash\phi$.

(7.3) $\Gamma[-] = \Gamma_1[-]\owedge\Gamma_2$ or $\Gamma_1\owedge\Gamma_2[-]$. The proof is quite similar to (7.2).

Case 8. $\Gamma\vdash\phi$ is obtained by $(\vdash\vee)$. The proof is quite similar to Case 5.

Case 9. $\Gamma\vdash\phi$ is obtained by $(\owedge\mrm{W})$. By induction hypothesis, one gets $\tau(\Gamma[\Delta])\vdash\phi$. Clearly one gets $\tau(\Gamma[\tau(\Delta)]\vdash\phi$. We prove $\tau(\Gamma[\tau(\Sigma)\wedge\tau(\Delta)])\vdash\phi$ by induction on $\Gamma$.

(9.1) $\Gamma[-] = [-]$. Then we have $\tau(\Delta)\vdash\phi$. In $\msf{BDFNL}$ we have $\tau(\Sigma)\wedge\tau(\Delta)\vdash\phi$.

(9.2) $\Gamma[-] = \Gamma_1[-]\odot\Gamma_2$ or $\Gamma_1\odot\Gamma_2[-]$. The two cases are quite similar, and we specify only the first case. Clearly $\tau(\Gamma_1[\tau(\Delta)])\bu\tau(\Gamma_2)\vdash\phi$. By residuation, one gets $\tau(\Gamma_1[\tau(\Delta)])\vdash\phi\limp \tau(\Gamma_2)$. By induction hypothesis on $\Gamma_1$, one gets $\tau(\Gamma_1[\tau(\Sigma)\wedge\tau(\Delta)])\vdash\phi\limp\tau(\Gamma_2)$. By residuation, one gets $\tau(\Gamma_1[\tau(\Sigma)\wedge\tau(\Delta)])\bu\tau(\Gamma_2)\vdash\phi$.

(9.3) $\Gamma[-] = \Gamma_1[-]\owedge\Gamma_2$ or $\Gamma_1\owedge\Gamma_2[\chi]$. The proof is quite similar to (9.2).

Case 10. $\Gamma\vdash\phi$ is obtained by $(\owedge\mrm{C})$, $(\owedge\mrm{E})$ or $(\owedge\mrm{As})$. The proof is done by lattice rules in $\msf{BDFNL}$. The proof is quite similar to Case 9.
\qed
\end{proof}

\begin{lemma}\label{lemma:gen3}
If $\tau(\Gamma)\vdash\phi$ is derivable in $\msf{G_\msf{BDFNL}}$, then $\Gamma\vdash \phi$ is derivable in $\msf{G_\msf{BDFNL}}$.\end{lemma}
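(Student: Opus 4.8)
The plan is to prove a slightly stronger statement by induction on the construction of the structure $\Gamma$, namely: \emph{for every context $\Sigma[-]$ and every structure $\Gamma$, if $\Sigma[\tau(\Gamma)]\vdash\phi$ is derivable in $\msf{G_\msf{BDFNL}}$, then $\Sigma[\Gamma]\vdash\phi$ is derivable in $\msf{G_\msf{BDFNL}}$}. The lemma follows at once by instantiating $\Sigma[-]$ with the empty context $[-]$.

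First I would dispatch the base case $\Gamma = \psi$ with $\psi\in\mc{L}_\mrm{LC}$: here $\tau(\Gamma)=\psi=\Gamma$, so $\Sigma[\tau(\Gamma)]\vdash\phi$ and $\Sigma[\Gamma]\vdash\phi$ are literally the same consecution and there is nothing to do. For the inductive step, suppose $\Gamma = \Gamma_1\odot\Gamma_2$, so that $\tau(\Gamma)=\tau(\Gamma_1)\bu\tau(\Gamma_2)$ and we are given a derivation of $\Sigma[\tau(\Gamma_1)\bu\tau(\Gamma_2)]\vdash\phi$. Applying the admissible rule $(\bu\vdash\ \ua)$ from Lemma~\ref{lemma:inv} (with a single marked occurrence) yields $\Sigma[\tau(\Gamma_1)\odot\tau(\Gamma_2)]\vdash\phi$. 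Now read this as $\Sigma'[\tau(\Gamma_1)]\vdash\phi$ for the context $\Sigma'[-]:=\Sigma[[-]\odot\tau(\Gamma_2)]$; since $\Gamma_1$ is a proper substructure of $\Gamma$, the induction hypothesis gives $\Sigma'[\Gamma_1]\vdash\phi$, i.e.\ $\Sigma[\Gamma_1\odot\tau(\Gamma_2)]\vdash\phi$. Repeating the same move on the second coordinate, with $\Sigma''[-]:=\Sigma[\Gamma_1\odot[-]]$, and applying the induction hypothesis to $\Gamma_2$, we obtain $\Sigma[\Gamma_1\odot\Gamma_2]\vdash\phi$, as required. The case $\Gamma=\Gamma_1\owedge\Gamma_2$ is treated identically, using $(\owedge\vdash\ \ua)$ from Lemma~\ref{lemma:inv} in place of $(\bu\vdash\ \ua)$ to pass from $\wedge$ to $\owedge$.

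The one point that genuinely needs care — and the reason for stating the claim relative to an arbitrary context rather than directly — is that after stripping the top structural connective one is left with a consecution of the shape $\Sigma[\tau(\Gamma_1)\odot\tau(\Gamma_2)]\vdash\phi$, which is \emph{not} of the form $\tau(\Delta)\vdash\phi$ for a single structure $\Delta$, so the original statement of the lemma cannot be invoked recursively; the context-indexed formulation closes exactly this gap. Everything else is bookkeeping, the only substantive inputs being the invertibility of $(\bu\vdash)$ and $(\wedge\vdash)$ already established in Lemma~\ref{lemma:inv}.
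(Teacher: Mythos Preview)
Your proof is correct, but it takes a genuinely different route from the paper's. The paper argues as follows: first observe (by a separate easy induction on structures) that $\Sigma\vdash\tau(\Sigma)$ is derivable for every structure $\Sigma$; in particular $\Gamma\vdash\tau(\Gamma)$, and then one application of the admissible rule $(\mrm{mix})$ to this and the hypothesis $\tau(\Gamma)\vdash\phi$ yields $\Gamma\vdash\phi$. So the paper's proof is essentially a one-liner once $(\mrm{mix})$ is available, and the outer ``induction on $\Gamma$'' is really only used to establish the auxiliary fact $\Gamma\vdash\tau(\Gamma)$.

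Your approach, by contrast, avoids $(\mrm{mix})$ altogether: you strengthen the statement to allow an ambient context, and then peel off one structural connective at a time using the invertibility rules $(\bu\vdash\ \ua)$ and $(\owedge\vdash\ \ua)$ from Lemma~\ref{lemma:inv}. This is arguably more elementary, since Lemma~\ref{lemma:inv} is proved by a straightforward induction on derivation height and does not rely on Theorem~\ref{thm:cut}. The price you pay is the need for the context-indexed strengthening, which (as you correctly note) is forced because after inverting one gets a mixed structure like $\Sigma[\tau(\Gamma_1)\odot\tau(\Gamma_2)]$ that is not of the form $\tau(\Delta)$. The paper's route is shorter given that $(\mrm{mix})$ has already been established; yours would be the natural choice if one wanted the lemma independently of cut-admissibility.
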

\begin{proof}
By induction on the construction of $\Gamma$. The case that $\Gamma$ is a formula is obvious. Assume $\Gamma = \Gamma_1\owedge\Gamma_2$. Assume $\tau(\Gamma_1)\wedge\tau(\Gamma_2)\vdash\phi$. By induction on the construction of a structure $\Sigma$ one can easily show $\Sigma\vdash\tau(\Sigma)$. Then we have $\Gamma_1\vdash\tau(\Gamma_1)$ and $\Gamma_2\vdash\tau(\Gamma_2)$. By $(\vdash\wedge)$, one gets $\Gamma_1\owedge\Gamma_2\vdash\tau(\Gamma_1)\wedge\tau(\Gamma_2)$. By (mix), one gets $\Gamma_1\owedge\Gamma_2\vdash\phi$.
The case $\Gamma = \Gamma_1\odot\Gamma_2$ is similar.
\qed
\end{proof}

\begin{theorem}\label{thm:completeness}
A consecution $\Gamma\vdash\phi$ is derivable in $\msf{G_\msf{BDFNL}}$ if and only if $\mathbb{BDRG}\models\Gamma\vdash\phi$.
\end{theorem}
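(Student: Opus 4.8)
The plan is to prove both directions by routing everything through the algebraic calculus $\msf{BDFNL}$, for which the strong completeness theorem with respect to $\mathbb{BDRG}$ is already available, and then transporting derivability across the translation $\tau$ by means of Lemmas \ref{lemma:gen1}, \ref{lemma:gen2} and \ref{lemma:gen3}. The only extra ingredient needed is the elementary observation, recorded right after the definition of validity for consecutions, that $\mathbb{BDRG}\models\Gamma\vdash\phi$ if and only if $\mathbb{BDRG}\models\tau(\Gamma)\vdash\phi$; this is immediate from $\mu(\Gamma)=\mu(\tau(\Gamma))$ for every assignment $\mu$ in every BDRG.

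First I would handle soundness. Suppose $\Gamma\vdash\phi$ is derivable in $\msf{G_\msf{BDFNL}}$. By Lemma \ref{lemma:gen2}, $\tau(\Gamma)\vdash\phi$ is derivable in $\msf{BDFNL}$, whence by the soundness half of the strong completeness theorem for $\msf{BDFNL}$ we obtain $\mathbb{BDRG}\models\tau(\Gamma)\vdash\phi$, and therefore $\mathbb{BDRG}\models\Gamma\vdash\phi$ by the observation above. (One could alternatively argue by a direct induction on the $\msf{G_\msf{BDFNL}}$-derivation, verifying that each axiom and each logical and structural rule is sound over BDRGs — the $\owedge$-rules reflect commutativity, associativity, idempotence and monotonicity of $\wedge$, the $\odot$-governed introduction rules reflect (RES) — but the detour through Lemma \ref{lemma:gen2} is shorter.) For completeness, suppose $\mathbb{BDRG}\models\Gamma\vdash\phi$. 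By the observation $\mathbb{BDRG}\models\tau(\Gamma)\vdash\phi$, so by the completeness half of the strong completeness theorem for $\msf{BDFNL}$ the sequent $\tau(\Gamma)\vdash\phi$ is derivable in $\msf{BDFNL}$. Since $\tau(\Gamma)$ and $\phi$ are $\mc{L}_\mrm{LC}$-formulas, Lemma \ref{lemma:gen1} gives a derivation of $\tau(\Gamma)\vdash\phi$ in $\msf{G_\msf{BDFNL}}$, and finally Lemma \ref{lemma:gen3} converts it into a derivation of $\Gamma\vdash\phi$ in $\msf{G_\msf{BDFNL}}$, as required.

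The substantive work does not lie in the present statement but in the lemmas it invokes: Lemma \ref{lemma:gen3} relies on the admissibility of (mix) established in Theorem \ref{thm:cut}, and Lemma \ref{lemma:gen2} requires the context-induction showing that $\tau$ commutes with structure-building and that the introduction rules for $\imp$, $\limp$ and $\bu$ can be simulated in $\msf{BDFNL}$ by residuation plus cut. Granting those, the remaining point to watch — and the thing I would state carefully — is that $\tau$ serves simultaneously as a \emph{derivability bridge} (Lemmas \ref{lemma:gen1}–\ref{lemma:gen3}) and as a \emph{validity bridge} (via $\mu(\Gamma)=\mu(\tau(\Gamma))$), so that the square of implications relating $\msf{G_\msf{BDFNL}}$-derivability, $\msf{BDFNL}$-derivability, $\msf{BDFNL}$-validity and $\msf{G_\msf{BDFNL}}$-validity closes up cleanly.
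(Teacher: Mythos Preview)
Your proof is correct and follows essentially the same route as the paper: both directions are routed through $\msf{BDFNL}$ via the translation $\tau$, using Lemma \ref{lemma:gen2} together with soundness of $\msf{BDFNL}$ for the forward direction, and completeness of $\msf{BDFNL}$ followed by Lemmas \ref{lemma:gen1} and \ref{lemma:gen3} for the converse. Your additional remarks about where the real work lies (admissibility of mix underpinning Lemma \ref{lemma:gen3}, and the context induction in Lemma \ref{lemma:gen2}) are accurate and match the paper's development.
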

\begin{proof}
For the `if' part, assume $\mathbb{BDRG}\models\Gamma\vdash\phi$. Then $\mathbb{BDRG}\models \tau(\Gamma)\vdash\phi$. By the completeness of $\msf{BDFNL}$, $\tau(\Gamma)\vdash\phi$ is derivable in $\msf{BDFNL}$. By Lemma \ref{lemma:gen1}, 
$\tau(\Gamma)\vdash\phi$ is derivable in $\msf{G_{BDFNL}}$.
By Lemma \ref{lemma:gen3}, $\Gamma\vdash \phi$ is derivable in $\msf{G_{BDFNL}}$. For the `only if' part, assume that $\Gamma\vdash \phi$ is derivable in $\msf{G_{BDFNL}}$. By Lemma \ref{lemma:gen2}, $\tau(\Gamma)\vdash \phi$ is derivable in $\msf{BDFNL}$. By the completeness of $\msf{BDFNL}$, $\mathbb{BDRG}\models  \tau(\Gamma)\vdash\phi$. Therefore $\mathbb{BDRG}\models\Gamma\vdash\phi$.
\qed
\end{proof}

\subsection{Extensions}
We will now consider some extensions of $\msf{S}_\mathbb{BDI}$ and their conservative extensions over $\msf{BDFNL}$.
Given an $\mc{L}_\bu$-sequent $(\sigma)~\chi\vdash\delta$ the propositional variables occurred in which are among $p_1,\ldots, p_n$, the structural rule corresponding to $(\sigma)$ is defined as the following rule $(\odot\sigma)$:
\[
\frac{\delta[\Delta_1/p_1,\ldots,\Delta_n/p_n]\vdash\phi}{\chi[\Delta_1/p_1,\ldots,\Delta_n/p_n]\vdash\phi}{(\odot\sigma)}
\]
where $\delta[\Delta_1/p_1,\ldots,\Delta_n/p_n]$ and $\chi[\Delta_1/p_1,\ldots,\Delta_n/p_n]$ are obtained from $\delta$ and $\chi$ by substituting $\Delta_i$ for $p_i$ uniformly, and substituting $\odot$ for $\bu$.

\begin{example}
For weak Heyting algebras, we have the following structural rules for $(tr)$ and $(wl)$:
\[
\frac{\Gamma[(\Lambda\odot\Delta)\odot \Delta]\vdash \phi}{\Gamma[\Lambda\odot\Delta]\vdash \phi}(\odot tr) ,
\quad
\frac{\Gamma[\Delta]\vdash \phi}{\Gamma[\Delta\odot\Sigma]\vdash \phi}(\odot wl).
\]
Let $\msf{G}_\mathsf{RWH}$ be the Gentzen-style sequent system obtained from $\msf{G}_\msf{RBDI}$ by adding $(\odot tr)$ and $(\odot wl)$. We can get similar sequent rules for sequents in Example \ref{exam:correspondents} and Genzten-style sequent systems.
\end{example}

For any set of $\mc{L}_\bu$-sequents $\Psi$, let $\odot\Psi = \{\odot\sigma\mid \sigma\in\Psi\}$ and $\msf{G_\msf{BDFNL}}(\odot\Psi)$ be the Gentzen-style sequent system obtained from $\msf{G_\msf{BDFNL}}$ by adding all rules in $(\odot\Psi)$.

\begin{theorem}\label{thm:mix-admissible}
For any set of $\mc{L}_\bu$-sequents $\Psi$,
if for every sequent $\chi\vdash\delta\in\Psi$, each propositional variable in $\chi$ occurs only once,
then $\mrm{(mix)}$ is admissible in $\msf{G_\msf{BDFNL}}(\odot\Psi)$.
\end{theorem}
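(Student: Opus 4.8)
The plan is to reprove mix admissibility by re-running the triple induction from the proof of Theorem~\ref{thm:cut} — on (i) the complexity of the mixed formula $\phi$, (ii) the height of the derivation of the left premiss $\Delta\vdash\phi$, and (iii) the height of the derivation of the right premiss $\Gamma[\phi]\ldots[\phi]\vdash\psi$ — now inside the extended calculus $\msf{G_\msf{BDFNL}}(\odot\Psi)$. All cases already treated there go through verbatim; the only genuinely new work is in the two subcases of Cases~3 and~4 of that proof in which the last rule $R_1$ of the left premiss, respectively the last rule $R_2$ of the right premiss, is an instance of some $(\odot\sigma)$ with $\sigma = (\chi\vdash\delta)\in\Psi$. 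Since $(\odot\sigma)$ acts only on antecedent structures, the mixed formula is never principal for it, so the commutation strategy of Cases~3 and~4 — push $(\mrm{mix})$ above the structural rule, then reapply the structural rule — is the one to follow.

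First I would treat $R_1 = (\odot\sigma)$. The left premiss then has the shape $\Gamma_0[\chi[\vec\Delta]]\vdash\phi$ and is inferred by $(\odot\sigma)$ from $\Gamma_0[\delta[\vec\Delta]]\vdash\phi$, the succedent $\phi$ being untouched. I would apply $(\mrm{mix})$ to $\Gamma_0[\delta[\vec\Delta]]\vdash\phi$ and the right premiss — legitimate by the induction hypothesis on (ii) — obtaining a sequent whose holes are filled with $\Gamma_0[\delta[\vec\Delta]]$, and then reapply $(\odot\sigma)$ once at each of the finitely many holes to recover the desired conclusion. This needs no hypothesis on $\Psi$, but it does use that $(\odot\sigma)$ can be applied inside an arbitrary context, which is the reading exhibited by the sample rules $(\odot tr)$ and $(\odot wl)$.

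The case $R_2 = (\odot\sigma)$ is where the linearity of $\chi$ is used. Here the right premiss $\Gamma_2[\chi[\vec\Delta]]\vdash\psi$ is inferred from $\Gamma_2[\delta[\vec\Delta]]\vdash\psi$, and the occurrences of $\phi$ that $(\mrm{mix})$ must eliminate may lie both in the outer context $\Gamma_2$ and inside the substituted structures $\Delta_i$. I would push $(\mrm{mix})$ up to the premiss — allowed by the induction hypothesis on (iii) — obtaining $\widehat{\Gamma}_2[\delta[\widehat{\Delta}_1,\ldots,\widehat{\Delta}_n]]\vdash\psi$, where $\widehat{(\cdot)}$ denotes uniform replacement of $\phi$ by $\Delta$; because $(\mrm{mix})$ replaces all copies of $\phi$ simultaneously, every copy of a given $\Delta_i$ becomes the same structure $\widehat{\Delta}_i$, so $(\odot\sigma)$ still applies and yields $\widehat{\Gamma}_2[\chi[\widehat{\Delta}_1,\ldots,\widehat{\Delta}_n]]\vdash\psi$. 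The step that must be checked is that this coincides with the intended conclusion $\Gamma_2[\Delta]\ldots[\Delta]\vdash\psi$ of the original mix: since each propositional variable occurs at most once in $\chi$, the structure $\chi[\vec\Delta]$ contains each $\Delta_i$ at most once, so deleting every $\phi$ from $\Gamma_2[\chi[\vec\Delta]]$ is literally $\widehat{\Gamma}_2[\chi[\widehat{\Delta}]]$; without linearity one would instead have to argue that the several synchronized copies of a $\Delta_i$ inside $\chi[\vec\Delta]$ are rewritten in lockstep, which is precisely the bookkeeping the hypothesis removes.

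The remaining chore is the degenerate subcase where $R_1$ or $R_2$ is one of the axioms $(\mrm{Id})$, $(\top)$, $(\bot)$ while the other is an $(\odot\sigma)$; these are handled exactly as in Case~2 of the proof of Theorem~\ref{thm:cut}. I expect the main obstacle to be the $R_2 = (\odot\sigma)$ case above: tracking how the occurrences of $\phi$ sitting inside the substituted structures migrate when $(\mrm{mix})$ is commuted past $(\odot\sigma)$, and verifying that the hypothesis ``each variable of $\chi$ occurs once'' is exactly what makes the reapplied $(\odot\sigma)$ reproduce the intended conclusion rather than a structure carrying spurious or missing copies of some $\Delta_i$.
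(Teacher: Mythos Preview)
Your strategy matches the paper's: extend the triple induction of Theorem~\ref{thm:cut} by treating the new structural rules $(\odot\sigma)$ as further instances of Cases~3 and~4, pushing $(\mrm{mix})$ above $(\odot\sigma)$ and reapplying. The paper's own proof is in fact terser than yours --- it mentions only the $R_2=(\odot\sigma)$ case --- so your separate treatment of $R_1=(\odot\sigma)$ and of the axiom subcases is fine and arguably more complete.

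However, your explanation of \emph{why} the linearity hypothesis on $\chi$ is needed is off. You write that ``$(\mrm{mix})$ replaces all copies of $\phi$ simultaneously'' and define $\widehat{(\cdot)}$ as uniform replacement of every $\phi$ by $\Delta$, but the mix rule as formulated replaces only the \emph{displayed} occurrences $[\phi]\ldots[\phi]$, which need not be all occurrences of $\phi$ in the right premiss. If mix really did replace every occurrence, your lockstep worry would evaporate even for non-linear $\chi$: each copy of $\Delta_i$ inside $\chi[\vec\Delta]$ would automatically become the same $\widehat{\Delta}_i$, and $(\odot\sigma)$ would reapply without any use of the hypothesis. The actual obstruction --- spelled out in the remark immediately following the theorem --- is that when $\chi$ contains some $p_i$ twice, the conclusion $\Gamma_2[\chi[\vec\Delta]]$ contains two literal copies of $\Delta_i$, and mix may target a $\phi$ in one copy but not the corresponding $\phi$ in the other. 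The resulting mix-conclusion then carries two \emph{different} structures in the two $p_i$-positions and is no longer of the form $\Gamma_2'[\chi[\vec\Sigma]]$ for any tuple $\vec\Sigma$; hence it cannot be obtained by pushing mix up to $\Gamma_2[\delta[\vec\Delta]]\vdash\psi$ and reapplying $(\odot\sigma)$. Linearity of $\chi$ is precisely what excludes such asymmetric targeting, and that is where the hypothesis does its work.
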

\begin{proof}
Based on the proof of Theorem \ref{thm:cut}, one needs to consider only the case that the right premise of (mix) is obtained by $(\odot\sigma)$. We first apply (mix) to the left premiss of (mix) and the premiss of $(\odot\sigma)$. Then by $(\odot\sigma)$, we get the conclusion of (mix).
\qed
\end{proof}

\begin{remark}
The condition that a propositional variable occurs at most once in $\chi$ in Theorem \ref{thm:mix-admissible} is significant. All sequents in Example \ref{exam:correspondents} satisfy this condition. When a propositional variable occurs more than once in $\chi$, the proof strategy in Theorem  \ref{thm:mix-admissible} may not work. For example, consider the the following inverse rule of $(\odot tr)$ which is obtained from $(p\bu q)\bu q\vdash p\bu q$:
\begin{displaymath}
\frac{\Gamma[\Lambda\odot\Delta[\psi]]\vdash\phi}{\Gamma[(\Lambda\odot\Delta[\psi])\odot\Delta[\psi]]\vdash\phi}(\odot tr\hspace{-0.1em}\ua)
\end{displaymath}
and the derivation
\[
\AxiomC{$\Sigma\vdash\psi$}
\AxiomC{$\Gamma[\Lambda\odot\Delta[\psi]]\vdash\phi$}
\RightLabel{\footnotesize $(\odot tr\hspace{-0.1em}\ua)$}
\UnaryInfC{$\Gamma[(\Lambda\odot\Delta[\psi])\odot\Delta[\psi]]\vdash\phi$}
\RightLabel{\footnotesize $(\mrm{mix})$}
\BinaryInfC{$\Gamma[(\Lambda\odot\Delta[\Sigma])\odot\Delta[\psi]]\vdash\phi$}
\DisplayProof
\]
in which only one occurrence of $\psi$ is mixed. In such a case, we may not be able to push up (mix) to the premiss of $(\odot tr\hspace{-0.1em}\ua)$.
\end{remark}

An $\mc{L}_\bu$-sequent $\chi\vdash\delta$ is said to be 
{\em good} if each propositional variable occurs at most once in $\chi$. Then we have the following theorem about good sequents:

\begin{theorem}
For any set of good $\mc{L}_\bu$-sequents $\Psi$, the following hold:

$(1)$ $\Gamma\vdash\phi$ is derivable in $\msf{G_\msf{BDFNL}}(\odot\Psi)$ iff $\msf{Alg}^+(\Psi)\models\Gamma\vdash \phi$.

$(2)$ if every propositional variable occurred in $\delta$ also occurs in $\chi$ for each sequent $\chi\vdash\delta$ in $\Psi$, then $\msf{G_\msf{BDFNL}}(\odot\Psi)$ has the subformula property.
\end{theorem}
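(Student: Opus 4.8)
The plan is to prove (1) and (2) together, following the template already established for $\msf{G_\msf{BDFNL}}$ itself (Lemmas \ref{lemma:gen1}--\ref{lemma:gen3} and Theorem \ref{thm:completeness}). For (1), I would first establish that $\msf{G_\msf{BDFNL}}(\odot\Psi)$ derives exactly the consecutions valid in $\msf{Alg}^+(\Psi)$ by routing through the algebraic calculus $\msf{BDFNL}(\Psi)$, which is already known to be sound and complete with respect to $\msf{Alg}^+(\Psi)$. The soundness direction (derivable $\Rightarrow$ valid) is routine: each axiom and logical rule of $\msf{G_\msf{BDFNL}}$ is already sound over $\mathbb{BDRG}\supseteq\msf{Alg}^+(\Psi)$, and each new structural rule $(\odot\sigma)$ for $\sigma = (\chi\vdash\delta)\in\Psi$ is sound over $\msf{Alg}^+(\Psi)$ precisely because every algebra there validates $\chi\vdash\delta$, so the monotonicity of $\bu$ together with $\mu(\delta[\vec\Delta/\vec p])\leq\mu(\phi)$ and $\mu(\chi[\vec\Delta/\vec p])\leq\mu(\delta[\vec\Delta/\vec p])$ gives $\mu(\chi[\vec\Delta/\vec p])\leq\mu(\phi)$.

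For the converse (valid $\Rightarrow$ derivable), I would adapt Lemma \ref{lemma:gen1}: show that if $\phi\vdash\psi$ is derivable in $\msf{BDFNL}(\Psi)$ then it is derivable in $\msf{G_\msf{BDFNL}}(\odot\Psi)$. The only new case beyond Lemma \ref{lemma:gen1} is when $\phi\vdash\psi$ is an instance $\chi[\vec t/\vec p]\vdash\delta[\vec t/\vec p]$ of a sequent $\sigma\in\Psi$; here one takes the structure $\Delta_i := t_i$, derives $\delta[\vec t/\vec p]\vdash\delta[\vec t/\vec p]$ as an instance of $(\mrm{Id})$ (after routing through $\tau$), and applies $(\odot\sigma)$ to obtain $\chi[\vec t/\vec p]\vdash\delta[\vec t/\vec p]$, using that $\tau$ of the $\odot$-form structure is the corresponding $\bu$-form formula. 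Conversely, adapting Lemma \ref{lemma:gen2}, I would check that if $\Gamma\vdash\phi$ is derivable in $\msf{G_\msf{BDFNL}}(\odot\Psi)$ then $\tau(\Gamma)\vdash\phi$ is derivable in $\msf{BDFNL}(\Psi)$: the new case is an application of $(\odot\sigma)$, where from the premiss's translation $\tau(\delta[\vec\Delta/\vec p])\vdash\phi = \delta[\vec{\tau(\Delta)}/\vec p]\vdash\phi$ one uses the axiom $\chi\vdash\delta\in\Psi$, substitution of $\tau(\Delta_i)$ for $p_i$, and $(\mrm{cut})$ in $\msf{BDFNL}(\Psi)$ to get $\chi[\vec{\tau(\Delta)}/\vec p]\vdash\phi$, i.e.\ $\tau(\chi[\vec\Delta/\vec p])\vdash\phi$. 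Lemma \ref{lemma:gen3} transfers verbatim since its proof uses only $(\vdash\wedge)$, $(\vdash\bu)$ and (mix), and (mix) is admissible in $\msf{G_\msf{BDFNL}}(\odot\Psi)$ by Theorem \ref{thm:mix-admissible} (here the goodness hypothesis on $\Psi$ is exactly what is needed). Combining these with completeness of $\msf{BDFNL}(\Psi)$ w.r.t.\ $\msf{Alg}^+(\Psi)$ yields (1), just as in Theorem \ref{thm:completeness}.

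For (2), the subformula property follows by inspecting the shape of the new rules. Every rule of $\msf{G_\msf{BDFNL}}$ already has the property that no formula occurrence in a premiss fails to be a subformula of the conclusion; for $(\odot\sigma)$ with $\sigma = (\chi\vdash\delta)$, the formulas appearing in the premiss $\delta[\vec\Delta/\vec p]\vdash\phi$ are built from $\phi$ and from the formulas occurring in the structures $\Delta_i$ (plus the fixed $\bu$-skeleton of $\delta$, which contributes only the connective $\bu$, not new formulas as subterms need extra care) — and the side condition in (2), that every propositional variable of $\delta$ already occurs in $\chi$, guarantees that each $\Delta_i$ occurring in the premiss also occurs in the conclusion $\chi[\vec\Delta/\vec p]\vdash\phi$. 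Hence every formula in the premiss is a subformula of some formula in the conclusion, and an induction on derivation height closes the argument. The main obstacle is bookkeeping in the $(\odot\sigma)$ cases: one must be careful that $\tau$ commutes appropriately with the simultaneous structural substitution (replacing $p_i$ by $\Delta_i$ and $\bu$ by $\odot$), i.e.\ that $\tau(\chi[\vec\Delta/\vec p]) = \chi[\vec{\tau(\Delta)}/\vec p]$ as $\mc{L}_\bu$-formulas, which requires a small separate induction on the structure of $\chi$ but is otherwise routine.
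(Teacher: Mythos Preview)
Your proposal is correct and follows essentially the same approach as the paper's own proof. The paper's proof is extremely terse---for (1) it simply says the argument is ``similar to Theorem \ref{thm:completeness}'' and that it suffices to show $\msf{BDFNL}(\Psi)$ and $\msf{G_{BDFNL}}(\odot\Psi)$ are equivalent, and for (2) it just notes that the variable condition ensures every subformula of $\delta$ is already a subformula of $\chi$---whereas you have spelled out exactly how Lemmas \ref{lemma:gen1}--\ref{lemma:gen3} are to be adapted, correctly isolated the one new case (the $(\odot\sigma)$ rule) in each direction, and correctly identified where the goodness hypothesis enters (via Theorem \ref{thm:mix-admissible} for the admissibility of (mix) used in the analogue of Lemma \ref{lemma:gen3}).
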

\begin{proof}
The proof of (1) is similar to Theorem \ref{thm:completeness}. It suffices to show that the algebraic sequent system $\msf{BDFNL}(\Psi)$ is equivalent to $\msf{G_{BDFNL}}(\odot\Psi)$. 
For (2), if every propositional variable occurred in $\delta$ also occurs in $\chi$, then every subformula of $\delta$ is a subformula of $\chi$. Hence the structural rule $(\odot\sigma)$ does not effect on the subformula property.
\qed
\end{proof}

Let $\Phi$ be a set of inductive $\mc{L}_\mrm{SI}$-sequents. Assume that $\Psi$ is set of $\mc{L}_\bu$-sequent such that $\Phi\equiv_\msf{ALC}\Psi$. Then the algebraic sequent system $\msf{BDFNL}(\Psi)$ is a conservative extension of $\msf{S}_\mathbb{BDI}(\Phi)$. If $\Psi$ is a set of good $\mc{L}_\bu$-sequent, one gets a Gentzen-style cut-free sequent calculus $\msf{G_{BDFNL}}(\odot\Psi)$.
Furthermore, if $\msf{G_{BDFNL}}(\odot\Psi)$ has the subformula property, we obtain a Gentzen-style cut-free sequent calculus for $\msf{S}_\mathbb{BDI}(\Phi)$ if we omit rules for $\bu$ and $\limp$ from $\msf{G_{BDFNL}}(\odot\Psi)$.

\begin{table}[htbp]
\caption{Gentzen-style Sequent Calculi}
\vspace{-1em}
\[
\begin{tabular}{|c|l|}
\hline
Strict Implication Logic~&~Conservative Extension\\
\hline
$\mathsf{G_{WH}}$~&~$\mathsf{G_{RWH}}=\mathsf{G_{BDFNL}}+(\odot wl)+(\odot tr)$\\
$\mathsf{G_{T}}$~&~$\mathsf{G_{RT}}=\mathsf{G_{BDFNL}}+(\odot ct)$\\
$\mathsf{G_{W}}$~&~$\mathsf{G_{RW}}=\mathsf{G_{BDFNL}}+(\odot wr)$\\
$\mathsf{G_{RT}}$~&~$\mathsf{G_{RRT}}=\mathsf{G_{BDFNL}}+(\odot rt)$\\
$\mathsf{G_{B}}$~&~$\mathsf{G_{RB}}=\mathsf{G_{BDFNL}}+(\odot b)$\\
$\mathsf{G_{B'}}$~&~$\mathsf{G_{RB'}}=\mathsf{G_{BDFNL}}+(\odot b')$\\
$\mathsf{G_{C}}$~&~$\mathsf{G_{RC}}=\mathsf{G_{BDFNL}}+(\odot c)$\\
$\mathsf{G_{FR}}$~&~$\mathsf{G_{RFR}}=\mathsf{G_{BDFNL}}+(\odot fr)$\\
$\mathsf{G_{W'}}$~&~$\mathsf{G_{RW'}}=\mathsf{G_{BDFNL}}+(\odot w')$\\
$\mathsf{G_{BCA}}$~&~$\mathsf{G_{RBCA}}=\mathsf{G_{T}}+(\odot w)$\\
$\mathsf{G_{KT}}$~&~$\mathsf{G_{RKT}}=\mathsf{G_{RWH}}+(\odot ct)$\\
$\mathsf{G_{K4}}$~&~$\mathsf{G_{RK4}}=\mathsf{G_{RWH}}+(\odot rt)$\\
$\mathsf{G_{S4}}$~&~$\mathsf{G_{RS4}}=\mathsf{G_{RKT}}+(\odot rt)$\\
$\mathsf{G_{KW}}$~&~$\mathsf{G_{RKW}}=\mathsf{G_{WH}}+(\odot w)$\\
\hline
\end{tabular}
\]
\vspace{-1em}
\label{table:gsc}
\end{table}

For example, the algebraic correspondents in Table \ref{table:cp} are good $\mc{L}_\bu$-sequents. Then we get Gentzen-style sequent calculi in Table \ref{table:gsc} for residuated BDIs defined by the corresponding $\mc{L}_\bu$-sequents. These calculi admit $\mrm{(mix)}$ and have the subformula property.

\subsection{Comparison with literature}
Our framework in the present paper is to apply unified correspondence theory to proof theory of strict implication logics. The sequent calculi developed for conservative extensions are Gentzen-style.
This framework is quite different from the approaches in literature. Here we compare some sequent calculi for strict implication logics in literature with these calculi listed in Table \ref{table:gsc}.

Two types of calculi for non-classical logics in literature are distinguished by Alenda, Olivetti and Pozzato \cite{AOP12}:
\begin{quote}
 ``Similarly to modal logics and other extensions/alternative to classical logics two types of calculi: {\em external} calculi which make use of labels and relations on them to import the semantics into the syntax, and {\em internal} calculi which stay within the language, so that a configuration' (sequent, tableaux node ...) can be directly interpreted as a formula of the language." \cite[p.15]{AOP12} 
\end{quote} 
Obviously the sequent calculi developed in the present paper are {\em internal} because every structure in an $\mc{L}_\mrm{LC}$-sequent is directly translated into an $\mc{L}_\mrm{LC}$-formula. Ishigaki and Kashima \cite{IK07} also developed internal sequent calculi for some strict implication logics, but we have mentioned the advantages of our approach in Section \ref{section:introduction}.

External calculi for strict implication logics are also developed in literature. 
Labelled sequent calculi for intermediate logics are developed by Dyckhoff and Negri \cite{DN12}, and their connections with Hilbert axioms and hypersequents are investigated by Ciabattoni et al \cite{CMS2013}. In this approach, any intermediate logic characterized by a class of relational frames that is definable by first-order {\em geometric axioms}\footnote{A geometric axiom is a first-order formula of the form $\forall \ol{z}(P_1\wedge\ldots\wedge P_m\supset \exists \ol{x}(M_1\vee\ldots\vee M_n))$ where each $P_i$ is an atomic formula, and each $M_j$ is a conjunction of atomic formulas, and $\ol{z}$ and $\ol{x}$ are sequences of bounded variables. Each geometric axiom can be transformed into a geometric rule \cite{DN12}.}, can be formalized in a cut-free and contraction-free labelled sequent calculus that extends the labelled sequent calculus for intuitionistic logic with geometric rules transformed from these geometric axioms. Using the same approach, Yamasaki and Sano \cite{ys16} developed labelled sequent calculi for some subintuitionistic logics \cite{corsi87}.

The development of an external calculus for a strict implication logic depends on that the logic has geometric relational semantics, i.e., it is sound and complete with respect to a class of relational frames which is definable by a set of geometric theories. 
Our internal calculi for strict implication logics are developed 
for subvarieties of BDI algebras and they do not necessarily have relational semantics. The strict implication logic $\mbf{S}_\mathbb{BDI}$ is indeed an example without binary relational semantics.

The algorithm $\msf{ALBA}$, one of the main tools in unified correspondence theory, is applied in the present paper to the proof theory of strict implication logics. Firstly, it is used as a tool to calculate the first-order correspondents of inductive $\mc{L}_\mrm{SI}$-sequents. If the correspondents of a set of inductive sequents are geometric axioms, they can be transformed into geometric rules, and hence some labelled sequent calculi for strict implication logics can be developed. It is unknown if $\msf{ALBA}$ can capture all geometric axioms. A general converse correspondence theory is unknown yet. Secondly, our novel application of $\msf{ALBA}$ is to calculate the algebraic correspondents of some inductive $\mc{L}_\mrm{SI}$-sequents in the language $\mc{L}_\bu$. A proof-theoretic consequence of this application is that one can obtain mix-free internal sequent calculi for the conservative extensions of some  
strict implication logics.
However, the systematic connections between algebraic and first-order correspondents is unknown.

Our framework in the present paper may not be able to cover all such logics which have binary relational semantics. Consider strict implication logics containing (Sym) or (Euc) based on $\mbf{S}_\mathbb{WH}$ in Table \ref{table:gsc}.
Since (Sym) and (Euc) may not have correspondent in $\mc{L}_\bu$, these logics may not admit Gentzen-style sequent calculi that are obtained from $\msf{G_{WH}}$ by adding structural rules about $\odot$. Another example is Visser's logic $\msf{FPL}$ (formal provability logic) \cite{Visser81} which is a strict implication logic that extends basic propositional logic with the L\"{o}b's axiom $(q\imp p)\imp p\vdash q\imp p$. This axiom is not inductive. A labelled sequent calculus may be developed for $\msf{FPL}$ because it has binary relational semantics. But it is impossible to develop a Gentzen-style sequent calculus for it in our framework.

\section{Conclusion}
The present work studies the proof theory for strict impaction logic using unified correspondence theory as a proof-theoretic tool. First of all, we present general results about the semantic conservativity on DLE-logics via canonical extension. A consequence is that the strict implication logic $\msf{S_{BDI}}$ is conservatively extended to the Lambek calculus $\msf{BDFNL}$. The algorithm $\msf{ALBA}$ as a calculus for correspondence between DLE-logic and first-order logic and hence for canonicity, is specialized to the strict implication logic and Lambek calculus. The main contribution of the present paper is that we obtain an Ackermann lemma based calculus $\msf{ALC}$ from the algorithm $\msf{ALBA}$ as a  
tool for proving algebraic correspondence between a wide range of strict implication sequents and sequents in the language $\mc{L}_\bu$.
This tool gives not only more conservativity results, but also analytic rules needed for introducing the Gentzen-style cut-free sequent calculi. Another contribution is that we introduce a Gentzen-style sequent calculus for $\msf{BDFNL}$ and some of its extensions with analytic rules.

The final remark is about good $\mc{L}_\bu$-sequents that are used for obtaining cut-free sequent calculus.
It is very likely that a hiearchy of $\mc{L}_\bu$-sequents from which one obtains analytic rules can be established. Other connectives $\wedge, \vee$ and $\imp$ can be in principle added into the language $\mc{L}_\bu$ such that more analytic rules will be obtained. This is our work in progress.
Moreover, our approach to the proof theory of strict implication may be generalized to arbitrary DLE-logics.

\section*{Acknowledgement}
Thanks are given to Dr. Alessandra Palmigiano (Delft University of Technology) for her comments on drafts of this paper, and the reviewers for their helpful comments on the submitted version. The work of the first author is supported by Chinese national funding for social sciences and humanities (grant no. 14ZDB016). The work of the second author has been made possible by the NWO Vidi grant 016.138.314, by the NWO Aspasia grant 015.008.054, and by a Delft Technology Fellowship awarded in 2013.

\begin{appendix}
\section{Algebraic Correspondence}\label{appendix:cp}
(I) 
\[
\AxiomC{$\Imp q\vdash p\imp p$}
\RightLabel{\footnotesize RR1}
\UnaryInfC{$\Imp p\bu q\vdash p$}
\DisplayProof
\]
(MP)
\[
\AxiomC{$\Imp p\wedge (p\imp q)\vdash q$}
\RightLabel{\footnotesize Ap1}
\UnaryInfC{$r\vdash p\wedge (p\imp q)\Imp r\vdash q$}
\RightLabel{\footnotesize $\wedge$S}
\UnaryInfC{$r\vdash p, r\vdash p\imp q\Imp r\vdash q$}
\RightLabel{\footnotesize RL1}
\UnaryInfC{$r\vdash p, p\bu r\vdash q\Imp r\vdash q$}
\RightLabel{\footnotesize LAck}
\UnaryInfC{$p\bu p\vdash q\Imp p\vdash q$}
\RightLabel{\footnotesize RAck}
\UnaryInfC{$\Imp p\vdash p\bu p$}
\DisplayProof
\]
(W) 
\[
\AxiomC{$\Imp p\vdash q\imp p$}
\RightLabel{\footnotesize RR1}
\UnaryInfC{$\Imp q\bu p\vdash p$}
\DisplayProof
\]
(RT) 
\[
\AxiomC{$\Imp p\imp q\vdash r\imp (p\imp q)$}
\RightLabel{\footnotesize Ap1}
\UnaryInfC{$s\vdash p\imp q\Imp s\vdash r\imp (p\imp q)$}
\RightLabel{\footnotesize RL1}
\UnaryInfC{$p\bu s\vdash q\Imp s\vdash r\imp (p\imp q)$}
\RightLabel{\footnotesize RR1}
\UnaryInfC{$p\bu s\vdash q\Imp r\bu s\vdash p\imp q$}
\RightLabel{\footnotesize RR1}
\UnaryInfC{$p\bu s\vdash q\Imp p\bu(r\bu s)\vdash q$}
\RightLabel{\footnotesize RAck}
\UnaryInfC{$\Imp p\bu(r\bu s)\vdash p\bu s$}
\DisplayProof
\]
(B)
\[
\AxiomC{$\Imp p\imp q\vdash (r\imp p)\imp (r\imp q)$}
\RightLabel{\footnotesize Ap1}
\UnaryInfC{$s\vdash p\imp q\Imp s\vdash (r\imp p)\imp (r\imp q)$}
\RightLabel{\footnotesize RL1}
\UnaryInfC{$p\bu s\vdash q\Imp s\vdash (r\imp p)\imp (r\imp q)$}
\RightLabel{\footnotesize RR1}
\UnaryInfC{$p\bu s\vdash q\Imp (r\imp p)\bu s\vdash r\imp q$}
\RightLabel{\footnotesize RR1}
\UnaryInfC{$p\bu s\vdash q\Imp r\bu((r\imp p)\bu s)\vdash q$}
\RightLabel{\footnotesize RAck}
\UnaryInfC{$\Imp r\bu((r\imp p)\bu s)\vdash p\bu s$}
\RightLabel{\footnotesize RR1}
\UnaryInfC{$\Imp (r\imp p)\bu s\vdash r\imp(p\bu s)$}
\RightLabel{\footnotesize RR2}
\UnaryInfC{$\Imp r\imp p \vdash (r\imp(p\bu s))\limp s$}
\RightLabel{\footnotesize Ap1}
\UnaryInfC{$t\vdash r\imp p\Imp t \vdash (r\imp(p\bu s))\limp s$}
\RightLabel{\footnotesize RL1}
\UnaryInfC{$r\bu t\vdash p\Imp t \vdash (r\imp(p\bu s))\limp s$}
\RightLabel{\footnotesize RR2}
\UnaryInfC{$r\bu t\vdash p\Imp s\bu t \vdash r\imp(p\bu s) $}
\RightLabel{\footnotesize RR1}
\UnaryInfC{$r\bu t\vdash p\Imp r\bu (s\bu t) \vdash p\bu s$}
\RightLabel{\footnotesize RAck}
\UnaryInfC{$\Imp r\bu (s\bu t) \vdash (r\bu t)\bu s$}
\DisplayProof
\]
(B$'$)
\[
\AxiomC{$\Imp p\imp q\vdash (q\imp r)\imp (p\imp r)$}
\RightLabel{\footnotesize Ap1}
\UnaryInfC{$s\vdash p\imp q\Imp s\vdash (q\imp r)\imp (p\imp rt)$}
\RightLabel{\footnotesize RL1}
\UnaryInfC{$p\bu s\vdash q\Imp s\vdash (q\imp r)\imp (p\imp r)$}
\RightLabel{\footnotesize RR1}
\UnaryInfC{$p\bu s\vdash q\Imp (q\imp r)\bu s\vdash p\imp r$}
\RightLabel{\footnotesize RR1}
\UnaryInfC{$p\bu s\vdash q\Imp p\bu((q\imp r)\bu s)\vdash r$}
\RightLabel{\footnotesize RAck}
\UnaryInfC{$\Imp p\bu((p\bu s \imp r)\bu s)\vdash r$}
\RightLabel{\footnotesize RR1}
\UnaryInfC{$\Imp (p\bu s \imp r)\bu s \vdash p\imp r$}
\RightLabel{\footnotesize RR2}
\UnaryInfC{$\Imp p\bu s \imp r \vdash (p\imp r)\limp s$}
\RightLabel{\footnotesize Ap1}
\UnaryInfC{$t\vdash p\bu s \imp r\Imp t \vdash (p\imp r)\limp s$}
\RightLabel{\footnotesize RL1}
\UnaryInfC{$(p\bu s)\bu t\vdash r\Imp t \vdash (p\imp r)\limp s$}
\RightLabel{\footnotesize RR2}
\UnaryInfC{$(p\bu s)\bu t\vdash r\Imp t\bu s \vdash p\imp r$}
\RightLabel{\footnotesize RR1}
\UnaryInfC{$(p\bu s)\bu t\vdash r\Imp p\bu(t\bu s) \vdash  r$}
\RightLabel{\footnotesize RAck}
\UnaryInfC{$\Imp p\bu(t\bu s) \vdash (p\bu s)\bu t$}
\DisplayProof
\]
(C) 
\[
\AxiomC{$\Imp p\imp (q\imp r)\vdash q\imp (p\imp r)$}
\RightLabel{\footnotesize Ap1}
\UnaryInfC{$s\vdash p\imp (q\imp r) \Imp s\vdash q\imp (p\imp r)$}
\RightLabel{\footnotesize RL1}
\UnaryInfC{$p\bu s\vdash q\imp r \Imp s\vdash q\imp (p\imp r)$}
\RightLabel{\footnotesize RL1}
\UnaryInfC{$q\bu(p\bu s)\vdash r \Imp s\vdash q\imp (p\imp r)$}
\RightLabel{\footnotesize RR1}
\UnaryInfC{$q\bu(p\bu s)\vdash r \Imp q\bu s\vdash  p\imp r $}
\RightLabel{\footnotesize RR1}
\UnaryInfC{$q\bu(p\bu s)\vdash r \Imp p\bu(q\bu s)\vdash r $}
\RightLabel{\footnotesize RAck}
\UnaryInfC{$\Imp p\bu(q\bu s)\vdash q\bu(p\bu s)$}
\DisplayProof
\]
(Fr)
\[
\AxiomC{$\Imp p\imp (q\imp r)\vdash (p\imp q)\imp (p\imp r)$}
\RightLabel{\footnotesize (Ap1, Ap2)}
\UnaryInfC{$s\vdash p\imp (q\imp r), (p\imp q)\imp (p\imp r)\vdash t\Imp s\vdash t$}
\RightLabel{\footnotesize (RL1)}
\UnaryInfC{$q\bu (p\bu s)\vdash r, (p\imp q)\imp (p\imp r)\vdash t\Imp s\vdash t$}
\RightLabel{\footnotesize ($\imp$Ap1,$\imp$Ap2)}
\UnaryInfC{$q\bu (p\bu s)\vdash r, u\vdash p\imp q,p\imp r\vdash v, u\imp v\vdash t \Imp s\vdash t$}
\RightLabel{\footnotesize (RL1)}
\UnaryInfC{$q\bu (p\bu s)\vdash r, p\bu u\vdash q,p\imp r\vdash v, u\imp v\vdash t \Imp s\vdash t$}
\RightLabel{\footnotesize (AAP2)}
\UnaryInfC{$q\bu (p\bu s)\vdash r, p\bu u\vdash q,p\imp r\vdash v\Imp s\vdash u\imp v$}
\RightLabel{\footnotesize (RR1)}
\UnaryInfC{$q\bu (p\bu s)\vdash r, p\bu u\vdash q,p\imp r\vdash v\Imp u\bu s\vdash v$}
\RightLabel{\footnotesize (AAp2)}
\UnaryInfC{$q\bu (p\bu s)\vdash r, p\bu u\vdash q\Imp u\bu s\vdash p\imp r$}
\RightLabel{\footnotesize (RR1)}
\UnaryInfC{$q\bu (p\bu s)\vdash r, p\bu u\vdash q\Imp p\bu(u\bu s)\vdash r$}
\RightLabel{\footnotesize (AAp2)}
\UnaryInfC{$p\bu u\vdash q\Imp p\bu(u\bu s)\vdash q\bu (p\bu s)$}
\RightLabel{\footnotesize (RAck)}
\UnaryInfC{$\Imp p\bu(u\bu s)\vdash (p\bu u)\bu (p\bu s)$}
\DisplayProof
\]
(W$'$) 
\[
\AxiomC{$\Imp p\imp (p\imp q)\vdash p\imp q$}
\RightLabel{\footnotesize Ap1}
\UnaryInfC{$r\vdash p\imp (p\imp q)\Imp r\vdash p\imp q$}
\RightLabel{\footnotesize RL1}
\UnaryInfC{$p\bu r\vdash p\imp q \Imp r\vdash p\imp q$}
\RightLabel{\footnotesize RL1}
\UnaryInfC{$p\bu(p\bu r)\vdash q \Imp r\vdash p\imp q$}
\RightLabel{\footnotesize RR1}
\UnaryInfC{$p\bu(p\bu r)\vdash q \Imp p\bu r\vdash q$}
\RightLabel{\footnotesize RAck}
\UnaryInfC{$\Imp p\bu r\vdash p\bu(p\bu r)$}
\DisplayProof
\]
\end{appendix}

\end{document}